\documentclass[a4paper,reqno,12pt]{amsart}
\usepackage[utf8]{inputenc}
\usepackage[T1]{fontenc}
\usepackage[english]{babel}
\usepackage{amsfonts,amsmath,amssymb,amsthm}

\usepackage[expansion=false]{microtype}
\usepackage{mathtools}
\usepackage{geometry}
\usepackage{textcomp}
\usepackage[usenames]{color}
\usepackage{graphicx}
\usepackage{faktor}
\usepackage{xfrac}
\usepackage{euscript}
\usepackage{tikz}
\usepackage{tikz-cd}
\usepackage{enumitem}
\usepackage[final]{hyperref}
\hypersetup{
  colorlinks=true,
  linkcolor=black,
  citecolor=black,
  urlcolor=black
}

\theoremstyle{plain}
\newtheorem{theorem}{Theorem}[section]

\newtheorem{corollary}[theorem]{Corollary}

\newtheorem{proposition}[theorem]{Proposition}
\theoremstyle{definition}
\newtheorem{definition}[theorem]{Definition}
\newtheorem{notations}[theorem]{Notation}

\newtheorem{construction}[theorem]{Construction}

\theoremstyle{plain}
\newtheorem{lemma}[theorem]{Lemma}

\theoremstyle{remark}
\newtheorem{remark}[theorem]{Remark}

\DeclareMathOperator{\Spec}{Spec}

\DeclareMathOperator{\ML}{ML}

\DeclareMathOperator{\Hom}{Hom}

\DeclareMathOperator{\Quot}{Quot}

\newcommand{\KK}{\mathbb K}
\newcommand{\A}{\mathbb A}
\newcommand{\TT}{\mathbb{T}}

\newcommand{\ZZ}{\mathbb Z}
\newcommand{\QQ}{\mathbb Q}

\DeclareMathOperator{\cone}{cone}

\newcommand{\lmt}{\longmapsto}
\newcommand{\DD}{\mathfrak D}
\newcommand{\OO}{\mathcal O}
\newcommand{\PP}{\mathbb P}
\newcommand{\kk}{\Bbbk}

\DeclareMathOperator{\divisor}{div}
\newcommand{\HH}{\mathbb H}

\newcommand{\id}{\mathrm{id}}
\newcommand{\ii}{\mathbf{i}}

\usepackage{mathtools}

\makeatletter
\DeclareRobustCommand\widecheck[1]{{\mathpalette\@widecheck{#1}}}
\def\@widecheck#1#2{%
    \setbox\z@\hbox{\m@th$#1#2$}%
    \setbox\tw@\hbox{\m@th$#1%
       \widehat{%
          \vrule\@width\z@\@height\ht\z@
          \vrule\@height\z@\@width\wd\z@}$}%
    \dp\tw@-\ht\z@
    \@tempdima\ht\z@ \advance\@tempdima2\ht\tw@ \divide\@tempdima\thr@@
    \setbox\tw@\hbox{%
       \raise\@tempdima\hbox{\scalebox{1}[-1]{\lower\@tempdima\box
\tw@}}}%
    {\ooalign{\box\tw@ \cr \box\z@}}}
\makeatother

\title{$\mathbb T$-homogeneous locally nilpotent derivations of trinomial algebras}
\date{}
\author{Timofey Krasikov and Kirill Rassolov}
\address{%
    \begin{minipage}{\textwidth}
        \setlength{\parindent}{0pt}
        Timofey Krasikov\\
        \textbf{E-mail:} \url{timkrasikov@gmail.com}\\
        Kirill Rassolov\\
        \textbf{E-mail:} \url{kirill.rassolov@math.msu.ru}\\[4pt]
        Lomonosov Moscow State University, Faculty of Mechanics and Mathematics,\\
        Department of Higher Algebra, Leninskie Gory 1, Moscow, 119991 Russia;\\[2pt]
        and\\[2pt]
        HSE University, Faculty of Computer Science,\\
        Pokrovsky Boulevard 11, Moscow, 109028 Russia
    \end{minipage}%
}

\subjclass[2000]{Primary 13A50, 13R20; Secondary 14J50, 14L30, 14M25}

\begin{document}

\begin{abstract}
    A trinomial algebra is a commutative finitely generated algebra given by a system of compatible relations each of which is a polynomial with three terms. Such algebras arise as the Cox rings of varieties admitting a complexity one torus action. We describe locally nilpotent derivations of a trinomial algebra that are homogeneous under a natural torus action of complexity one.
\end{abstract}

\thanks{The work is supported by the grant RSF 25-11-00302}

\maketitle

\section*{Introduction}
A \textit{trinomial variety} is an affine variety given by a system of equations of the form $aM_i+bM_{i+1}+cM_{i+2}=0$, compatible in a special way. Here the $M_i$ denote fixed monomials in disjoint sets of variables; see Section~\ref{SubsectionTrinomials} for details. Following~\cite{HW}, we distinguish two types of trinomial varieties depending on whether the monomial corresponding to the empty set of variables (i.e., equal to $1$) is allowed. The coordinate algebra of a trinomial variety is called a \textit{trinomial algebra}. The ground field $\kk$ is assumed algebraically closed and of characteristic zero.

Recall that the \emph{complexity} of an action of a reductive group $G$ on a variety $X$ is the codimension of a general orbit of a Borel subgroup $B\subseteq G$. For an effective action of an algebraic torus $T$, the complexity equals $\dim X - \dim T$. A trinomial variety is endowed with a natural torus action of complexity one, which comes from the standard torus action on affine space by scaling the variables. Moreover, there is a close connection between trinomial algebras and arbitrary varieties with a torus action of complexity one via Cox rigns. 

The Cox construction (see, e.g.,~\cite{ADHL}) is fruitfully applied to toric varieties, that is to say, normal varieties with a torus action of complexity zero. The Cox ring of a toric variety is a polynomial ring, which allows one to realize, say, an affine toric variety as the categorical quotient of an affine space by an action of a diagonalizable group~\cite{C}. In turn, trinomial algebras are the Cox rings of normal rational semiprojective varieties with a torus action of complexity one, finitely generated divisor class group, and with no nonconstant invertible functions. In particular, every affine variety satisfying these conditions is the categorical quotient of a trinomial variety by an action of a diagonalizable group~\cite{HH, HW}.

Consider a finitely generated commutative associative algebra $R$ over the field $\kk$. A \emph{derivation} is a linear map $\delta\colon R\to R$ satisfying the Leibniz rule: $\delta(fg) = \delta(f)g+f\delta(g)$ for all $f,g\in R$.
Suppose $R$ is equipped with a grading $R=\bigoplus_{g\in G} R_g$ by an abelian group $G$. A derivation $\delta$ is called {\it homogeneous} if there exists an element $d\in G$, called the \emph{degree} of~$\delta$, such that $\delta(R_g)\subseteq R_{g+d}$ for all $g\in G$. It is well known that specifying a grading on $R$ by a finitely generated abelian group is equivalent to specifying an action of a quasitorus, i.e., a diagonalizable group. Moreover, gradings by free abelian groups correspond to torus actions. We say that a derivation~$\delta$ is \emph{homogeneous} under a quasitorus if $\delta$ is homogeneous with respect to the corresponding grading.

A derivation $\delta$ is called \emph{locally nilpotent}, or an \emph{LND}, if for every $f\in R$, there exists an $n\in \ZZ_{>0}$ such that $\delta^n(f)=0$. The exponential map establishes a bijection between locally nilpotent derivations up to scaling and $\mathbb G_a$-subgroups in the automorphism group of a given algebra. By a $\mathbb G_a$-subgroup we mean a one-parameter algebraic subgroup isomorphic to the additive group of the ground field $\kk$. Namely, an LND $\delta$ corresponds to the subgroup $\{\exp(t\delta)\,|\,t\in \kk\}$. Furthermore, LNDs homogeneous with respect to the action of a quasitorus $H$ correspond to $H$-normalized $\mathbb G_a$-subgroups.

Homogeneous LNDs of trinomial algebras were studied in~\cite{Z, GZ, R}. Explicit formulae for the homogeneous LNDs were obtained in the case of complexity one action of a special quasitorus $\HH$ coming from the standard torus action on affine space by scaling the variables. The $\HH$-homogeneous LNDs were called \emph{elementary}. In the present paper, we consider LNDs homogeneous with respect to the torus ${\TT}$ which is the neutral component of the group $\HH$. For factorial varieties, the quasitorus $\HH$ is connected and therefore coincides with~${\TT}$. However in non-factorial case we have the strict inclusion ${\TT}\subset\HH$, and there may appear ${\TT}$-homogeneous LNDs which are not $\HH$-homogeneus; see Remark~\ref{RemarkCompareT&H} for specific example. 

Our interest in LNDs homogeneous with respect to a torus is motivated by their applications to study of arbitrary LNDs. Given a derivation of a finitely generated algebra, one can decompose it into a sum of homogeneous derivations, which are called the {\it homogeneous components}. However, the homogeneous components of a locally nilpotent derivation are, generally speaking, not locally nilpotent. The importance of free gradings is that for certain homogeneous components of a given LND, local nilpotency can be guaranteed. To be more precise, one should consider the Newton polytop of the degrees of the non-zero homogeneous components; then the homogeneous components corresponding to the vertices turn out to be locally nilpotent. Thus the study of homogeneous LNDs for free gradings provides more information about an arbitrary LND than for gradings by a non-free abelian group. 
In particular, we give a new proof of the rigidity criterion obtained earlier in~\cite{G}.

Recall that an algebra $R$ is called \textit{rigid} if its automorphism group contains no $\mathbb G_a$-subgroups, i.e., $R$ admits no non-zero LNDs. Two LNDs are called \textit{equivalent} if their kernels, i.e., the algebras of invariants of the corresponding $\mathbb G_a$-subgroups, coincide. $R$ is called \emph{semirigid} if all nonzero LNDs are equivalent.

We show that for a trinomial algebra of Type~$1$, all ${\TT}$-homogeneous LNDs are replicas of elementary LNDs, and we provide explicit formulae (Theorem~\ref{TheoremType1}). 
As an application, besides a new proof of the rigidity criterion (Corollary~\ref{CorRigidiryType1}), we also obtain a semirigidity criterion for Type~$1$, see Corollary~\ref{CorSemiRidigType1}.

For a trinomial algebra of Type $2$, we obtain a description of the equivalence classes of ${\TT}$-homogeneous LNDs (Theorem~\ref{TheoremType2Classes}). This implies a semirigidity criterion, see Corollary~\ref{CorSemiRigidType2}. Finally, Theorem~\ref{TheoremType2} provides explicit formulae for ${\TT}$-homogeneous LNDs in the case when the algebra is defined by a single relation. In contrast to Type~$1$, there appear ${\TT}$-homogeneous LNDs that are not replicas of elementary ones.

Our proofs for Type~$2$ use the combinatorial description of homogeneous LNDs on an affine variety with a torus action of complexity one from~\cite{Liendo}. It is based on the representation of varieties with a torus action via proper polyhedral divisors on semiprojective varieties, see~\cite{Altmann-Hausen}. However when a high-dimensional affine variety is explicitly given by equations, this requires complex computations associated with multidimensional rational polyhedra. Therefore, we use the technique of polyhedral divisors only for trinomial surfaces. Next, we show that a ${\TT}$-homogeneous LND on an arbitrary trinomial variety of Type~$2$ corresponds to a homogeneous LND on a trinomial surface.

\section{Preliminaries}
\subsection*{Trinomial algebras}\label{SubsectionTrinomials}
We begin with some notation. Fix integers $r\ge2,$ $d\ge0, r_0\in\{0,1\}$ and consider the variables $T_{ij},\,S_k$ with $i\in\{r_0,\ldots,r\},$ $j\in\{1,\ldots,n_i\},$ $n_i\ge1$, $k\in\{1,\ldots, d\}$. We will view the $T_{ij}, S_k$ as coordinates in the affine space $\A^{n+d}$, $n = n_1+\ldots+n_r$, and abbreviate the corresponding polynomial algebra in $n+d$ variables by $\kk[T_{ij}, S_k]$. Following Construction 1 of~\cite{HW}, we now define two types of trinomial varieties.

\textit{Type $1$.} Set $r_0=1$ and choose pairwise distinct elements $a_i\in\kk$. Then the subvariety in the affine space $\mathbb A^{n+d}$ given by:
\begin{equation}\label{equations1}
    T_i^{l_i}-T_{i+1}^{l_{i+1}} - (a_{i+1}-a_i)=0,\quad i=1,\ldots, r-1
\tag{1.1}
\end{equation}
is called a \emph{trinomial variety of Type $1$}. Note that~\eqref{equations1} is equivalent to that any three columns of the matrix:
$$
M = \begin{pmatrix}
    1 & T_{1}^{l_1}&\ldots&T_r^{l_r}\\
    -1 & a_1 &\ldots & a_r\\
    0 &1&\ldots&1
\end{pmatrix}
$$
are linearly dependent over $\Bbbk$.

\textit{Type $2$}. Set $r_0=0$. Fix also a matrix $A = (a_0,\ldots,a_r)$ with pairwise linearly independent column vectors $a_i\in\kk^2$. We define a \emph{trinomial variety of Type $2$} 
in $\mathbb A^{n+d}$ 
by requiring that any three columns of the $3\times(r+1)$-matrix:
$$
M=\begin{pmatrix}
    T_0^{l_0} &\ldots &T_r^{l_r}\\
    a_0 &\ldots &a_r
\end{pmatrix}
$$
are linearly dependent over $\Bbbk$. In turn, this can be rewritten as
\begin{equation}\label{equations2}
    \det \begin{pmatrix}
    T_i^{l_i} & T_{i+1}^{l_{i+1}} & T_{i+2}^{l_{i+2}}\\
    a_i & a_{i+1} & a_{i+2}
\end{pmatrix}=0, \quad i=0,\ldots,r-2.
\tag{1.2}
\end{equation}
 \begin{definition}\label{DefTrinomial}
     \emph{A trinomial algebra} of Type $1$ (resp.~$2$) is the quotient algebra of $\kk[T_{ij}, S_k]$ by the ideal generated by relations~(\ref{equations1}) (resp.~(\ref{equations2})).
 \end{definition}

 It was proved in~\cite[Theorem~1.2]{HW} that any trinomial algebra is a normal domain of dimension $n+d-r+1$. Thus, it is the algebra of global functions of the corresponding trinomial variety~$X$; the latter is also normal and of dimension $n+d-r+1$. We endow $X$ with a complexity one torus action as follows.

\begin{construction}\label{ConstrAct}
Consider the torus of dimension $n+d$ acting on the variables $T_{ij}$ and $S_k$ in $\A^{n+d}$ by multiplication. Let $\HH$ be the stabilizer of equations (\ref{equations1}) or~(\ref{equations2}) respectively; it is given by requiring that the monomials $T_i^{l_i}$ are invariant in case of Type $1$, and semi-invariant, of the same weight in case of Type $2$. Let ${\TT}$ stand for the identity component of $\HH$. Clearly, ${\TT}$ and $\HH$ act on the trinomial variety $X$ and $\dim {\TT}=\dim X-1$.
\end{construction}

\begin{remark}
The $\HH$-action corresponds to a grading of $\kk[X]$ by the character group of $\HH$; this grading was defined in~\cite[Construction~1.1]{HW}. It can be viewed as the finest grading such that the variables $T_{ij}, S_k$ are homogeneous; see~\cite[Lemma~1]{Z}. The ${\TT}$-action is obtained by identifying the character lattice of ${\TT}$ with the torsion-free part of the character group of $\HH$.
\end{remark}
\begin{remark}\label{RemarkAct}
One can obtain the grading corresponding to the ${\TT}$-action more directly. Namely, fix a variable $T_{ib_i}$ in each monomial $T_i^{l_i}$.

In the case of Type $1$, we consider the group $\ZZ^{n+d-r}$ with the canonical basis $e_{ij}, \,e_k$ where $i\in\{1,\ldots, r\}$, $j\in\{1,\ldots, n_i\}\setminus \{b_i\}$, $k\in\{1,\ldots, d\}$. Set
 $$
    \deg T_{ij}=e_{ij}\prod_{k\neq j} l_{ik}, \quad j\neq b_i, \qquad\qquad\deg T_{ib_i} = -\sum_{j\neq b_i}e_{ij}\prod_{k\neq b_i} l_{ik},$$    
    $$
    \deg S_k=e_k,\quad\mbox{all $k$}.
    $$

    For Type $2$, consider the group $\mathbb Z^{n+d-r}$ with the canonical basis $e_{ij}, \,e_k,\, e$ where $i\in\{0,\ldots, r\}$, $j\in\{1,\ldots, n_i\}\setminus \{b_i\}$, $k\in\{1,\ldots, d\}$. 
    Set
    $$
    \deg T_{ij}=e_{ij}\prod_{k\neq j} l_{ik}, \quad\mbox{$j\neq b_i$,}\quad\qquad\deg T_{ib_i} = e\prod_{p\neq i} l_{pb_p}-\sum_{j\neq b_i}e_{ij}\prod_{k\neq b_i} l_{ik}, $$$$
    \deg S_k=e_k,\quad\mbox{all $k$}.
    $$

    Then one can think of the character lattice of ${\TT}$ as the subgroup (of finite index) of $\mathbb Z^{n+d-r}$ generated by all $\deg T_{ij},$ $ \deg S_k$.
\end{remark}
\begin{remark} 
Suppose that $n_il_{ij}>1$ for all $i,j$; this always can be achieved by eliminating the variables that occur as a linear term in some relation. It follows from~\cite[Theorem 1.2\,$(iv)$]{HW} that $\HH=\mathbb T$ (i.e., $\HH$ is connected) if and only if the trinomial algebra is factorial. Furthermore, in case of Type $1$, this is equivalent to the condition $\gcd(l_{i1},\ldots,l_{in_i})=1$ for all $i$. In case of Type~$2$, a necessary and sufficient condition is that the integers $d_i=\gcd(l_{i1},\ldots,l_{in_i})$ are pairwise coprime.
\end{remark}

We will say that nonzero nonunit elements $a,b$ of a trinomial algebra $R$ are \emph{coprime} if, for all $f,g\in R$, the condition $fa=gb$ implies that $a\mid g$ (hence also $b\mid f$).
\begin{lemma}\label{LemmaCoprime}\label{LemmaCoprimeFoolly}
    The variables $T_{ij}, S_k$ are pairwise coprime.   
\end{lemma}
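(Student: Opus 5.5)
The plan is to reduce coprimality to a dimension count, using that a trinomial algebra $R$ is Cohen--Macaulay. First note what the definition asks. For nonzero nonunits $a,b$ in the domain $R$, coprimality says exactly that $a$ is a non-zero-divisor on $R/bR$. Indeed, if $fa=gb$, then $a\bar g=0$ in $R/bR$ forces $\bar g=0$, i.e. $b\mid g$. By symmetry of the argument (we treat both orders of $a,b$), this is the same as the statement $a\mid g$ obtained with the roles of $a$ and $b$ exchanged.

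Now $R$ is a normal domain of dimension $n+d-r+1$ by \cite[Theorem~1.2]{HW}. It is cut out by $r-1$ equations in $\A^{n+d}$, so it is a complete intersection and hence Cohen--Macaulay. By the unmixedness theorem, every associated prime of $bR$ is minimal over $bR$ and has height one. So $a$ is a non-zero-divisor modulo $b$ as soon as $a$ lies in no minimal prime of $bR$. Geometrically, every irreducible component of $V(b)\subseteq X$ has dimension $\dim X-1$. Hence it suffices to show
\[
\dim V(a,b)\le \dim X-2 .
\]
If $V(a,b)=\emptyset$, there is nothing to prove. (Directly: $1=ua+vb$ and $gb=fa$ give $g=a(gu+fv)$.)

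For the $S_k$ the statement is immediate, since $R=R'[S_1,\dots,S_d]$ is a polynomial ring over the trinomial algebra $R'$ without $S$-variables. So the key step is to compute the dimensions of $V(T_{ij})$ and $V(T_{ij},T_{i'j'})$. I would use the map $X\to\A^{r+1}$ (resp.\ $\A^r$) sending a point to the vector of monomial values $(T_i^{l_i})_i$. For each fixed value $c$ of $T_i^{l_i}$, the locus $\{T_i^{l_i}=c\}\subseteq\A^{n_i}$ has dimension $n_i-1$, whether $c\neq0$ or $c=0$ (in the latter case it is a union of coordinate hyperplanes).

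\emph{Type~$2$.} The vector of monomials lies in the $2$-dimensional row space of $A$, and the columns of $A$ are pairwise independent. A vector in this row space with one prescribed zero coordinate spans a line. A vector with two zero coordinates is zero. Consequently:
\begin{itemize}
\item For $V(T_{ij},T_{i'j'})$ with $i\neq i'$, all monomials vanish. The dimension is then $\sum_k(n_k-1)+d=n+d-r-1=\dim X-2$.
\item For $i=i'$, the remaining monomials move on a line. This gives $(n_i-2)+\sum_{k\ne i}(n_k-1)+1+d=\dim X-2$.
\end{itemize}

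\emph{Type~$1$.} If $T_i^{l_i}=0$, then each $T_k^{l_k}$ equals the nonzero constant $a_i-a_k$. Hence two variables from different monomials have no common zero, so that case is empty. For $i=i'$ the count gives $(n_i-2)+\sum_{k\neq i}(n_k-1)+d=\dim X-2$. (If $n_i=1$, then $i=i'$ forces the two variables to coincide, so there is nothing to prove.)

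I expect the main care to lie in the passage from ``height two'' to ``regular sequence''. It must be done via unmixedness of principal ideals in a Cohen--Macaulay ring, rather than a graded regular-sequence argument. The reason is that the Type~$1$ grading is not positive, so the usual graded shortcut is unavailable. The fibre-dimension bookkeeping also needs attention, especially that the bounds hold for every component and not just generically. This is covered because the estimates above are upper bounds valid fibrewise over the whole image of the monomial map.
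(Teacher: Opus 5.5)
Your proof is correct, and it takes a genuinely different route from the paper.

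The paper relies on the deeper parts $(ii),(iii)$ of \cite[Theorem~1.2]{HW}. These say that the grading by the character group of $\HH$ is factorial and that the variables are pairwise nonassociated homogeneous primes. A short argument with homogeneous components then shows that $fa=gb$ forces $a\mid g$.

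You use only that $R$ is a domain of dimension $n+d-r+1$. From this you get Cohen--Macaulayness via the complete-intersection presentation, and unmixedness of principal ideals then reduces coprimality to the estimate $\dim V(a,b)\le\dim X-2$. Your dimension counts through the monomial map are right in both types. This includes the emptiness of $V(T_{ij},T_{i'j'})$ for $i\neq i'$ in Type~$1$, and in Type~$2$ the fibration over the line in the row space of $A$ on which the $i$-th coordinate vanishes, with fibres of constant dimension.

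What each route buys:
\begin{itemize}
\item Your route is more self-contained and gives slightly more. You get a regular sequence in either order, and in Type~$1$ the unit ideal for variables from different monomials. The price is the dimension bookkeeping.
\item The paper's route is shorter and applies verbatim to any pair of nonassociated homogeneous primes. It rests on the factoriality of the $\HH$-grading.
\end{itemize}

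Two small remarks. First, in your opening paragraph the letters are swapped. From $fa=gb$ one gets $\bar f\,\bar a=0$ in $R/bR$, hence $b\mid f$, which in a domain is equivalent to $a\mid g$. This is harmless, because your criterion $\dim V(a,b)\le\dim X-2$ is symmetric in $a$ and $b$. Second, the complete-intersection step can be skipped. Normality of $R$, which you already quote from \cite{HW}, gives Serre's condition $S_2$. That alone implies every associated prime of a nonzero principal ideal has height one, which is all the unmixedness you use.
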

\begin{proof}
    The grading by the character group of $\HH$ is factorial and the $T_{ij}, S_k$ are homogeneously prime; see~\cite[Theorem~1.2\,$(ii,\,iii)$]{HW}. Thus the assertion follows from the following fact: Let $R$ be a factorially graded domain and $a,b\in R$ nonassociated homogeneously primes; then $a,b$ are coprime. 
    
    In fact, take $f,g\in R$. Consider the homogeneous decomposition $f=f_1+\ldots+f_k$ and $g=g_1+\ldots+g_l$ where $f_p\neq0,\,g_q\neq0$ for all $p,\,q$. Then $fa=gb$ implies that $k=l$ and, after a suitable permutation of the $g_p$'s, we have $f_pa=g_{p}b$ for all $p$. Hence $a\mid g_{p}$ for all $p$, and so $a\mid g$.
\end{proof}
When an integer $a$ divides $l_{ij}$ for all $j$, we will write 
$T_i^{l_i/a}$ rather than $\prod_{j=1}^{n_i} T_{ij}^{l_{ij}/a}$. 

\begin{lemma}\label{LemmaRationalInvariants}
Let $R$ be a trinomial algebra of Type $2$. Then any element of the field of rational invariants $(\Quot R)^{{\TT}}$ is a rational function in
$$\frac{T_i^{l_i/d_{ij}}} {T_j^{l_j/d_{ij}}}\qquad \mbox{where}\quad d_{ij}=\gcd(d_i, d_j), \quad d_i=\gcd(l_{i1},\ldots, l_{in_i}).$$
\end{lemma}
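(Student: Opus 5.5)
We compute $(\Quot R)^{\TT}$ in two steps: first the invariants of the big torus on the ambient field, then restrict to $X$ and use the relations (1.2).

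\textbf{Step 1: the ambient torus.} Since $\TT$ is a torus acting diagonally on the variables, $\Quot R$ is spanned by homogeneous fractions. Hence every rational $\TT$-invariant is a quotient of two homogeneous elements of $R$ of the same $\TT$-degree. After multiplying numerator and denominator by a suitable monomial, such an invariant can be written as $F/G$ with $F,G$ homogeneous and $G$ a monomial. So $(\Quot R)^{\TT}$ is generated over $\kk$ by Laurent monomials of degree zero, together with ratios of homogeneous components of equal degree.

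\textbf{Step 2: degree-zero Laurent monomials.} The field $\Quot R$ is generated by the variables, and a degree-zero element is a sum of degree-zero Laurent monomials times $\TT$-invariant coefficients. I would therefore first describe the group $L$ of Laurent monomials $\prod T_{ij}^{m_{ij}}\prod S_k^{c_k}$ of $\TT$-degree zero. Since the $S_k$ have free independent degrees, the $c_k$ vanish.

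Using the description of the $\TT$-grading in Remark~\ref{RemarkAct}, the group of characters trivial on $\TT$ is the saturation of the lattice $K$ of characters trivial on $\HH$. The lattice $K$ is generated by the exponent vectors of $T_i^{l_i}-T_j^{l_j}$. By \cite{HW}, $\HH$ is exactly the stabilizer of the monomials $T_i^{l_i}$ up to a common weight. The saturation of $K$ in $\ZZ^{n}$ consists of the vectors $m$ with $Nm\in K$ for some $N$. Such an $m$ must lie in the $\QQ$-span of the vectors $l_i-l_j$, so $m=\sum_i q_i l_i$ with $\sum q_i=0$ and each $q_il_i$ integral. Integrality of $q_il_i$ means $q_i\in\frac1{d_i}\ZZ$.

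The key combinatorial step is to show that the group $\{(q_i): q_i\in\frac{1}{d_i}\ZZ,\ \sum q_i=0\}$ is generated by the elements $\frac{1}{d_{ij}}(e_i-e_j)$. I expect this step to be the main obstacle. I would prove it by induction on $r$, or locally at each prime $p$. At a fixed $p$, order the indices by the $p$-adic valuation of $d_i$. Take the index with the largest valuation, and cancel its denominator using the one with the second largest valuation; that pair's $\gcd$ has exactly the second largest valuation, which is what the residual sum condition forces.

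Consequently, every degree-zero Laurent monomial is a product of the monomials $T_i^{l_i/d_{ij}}/T_j^{l_j/d_{ij}}$ and their inverses.

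\textbf{Step 3: dimension count and closure.} Let $K_0$ be the subfield of $(\Quot R)^{\TT}$ generated by these monomials. The field $(\Quot R)^{\TT}$ has transcendence degree $1$, since the action has complexity one. Every homogeneous element $f\in R$ of degree $w$ can be written as $f=M\cdot h$, with $M$ a fixed Laurent monomial of degree $w$ and $h$ a sum of degree-zero Laurent monomials. By Step 2, $h$ lies in $K_0$. Thus every ratio $F/G$ of equal-degree homogeneous elements lies in $K_0$, since the factor $M$ cancels. This gives $(\Quot R)^{\TT}=K_0$, which is the asserted statement. Finally, I would check that the relations (1.2) introduce no extra invariants beyond ratios of monomials, so the argument restricts correctly from the ambient field to $\Quot R$.
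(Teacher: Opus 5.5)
Your argument is correct in substance, but it follows a different route from the paper's. The paper works block by block. Using the subtorus $\TT_i$ that acts only on the variables of $T_i^{l_i}$, it removes from all monomials of $f$ and $g$ a common factor not divisible by $T_i^{l_i/d_i}$. This reduces to polynomials in the $T_i^{l_i/d_i}$, and it then compares weights under the one-dimensional torus acting on the $T_{ib_i}$.

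You instead determine the whole group of $\TT$-weight-zero Laurent monomials at once. It is the saturation of $\langle l_i-l_j\rangle$, i.e.\ the vectors $\sum_i q_il_i$ with $q_i\in\frac1{d_i}\ZZ$ and $\sum_i q_i=0$. You then reduce the lemma to the fact that this group is generated by the $\frac1{d_{ij}}(e_i-e_j)$.

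This is more than a change of language. The paper's last step only compares pure powers $(T_i^{l_i/d_i})^\alpha$ and $(T_j^{l_j/d_j})^\beta$. The monomials of $f,g$ may involve several indices at once; for $T_{01}^6+T_{11}^{10}+T_{21}^{15}=0$, the invariant $T_{01}T_{11}/T_{21}^4$ is such a case. Your generation statement is exactly what covers these mixed monomials.

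Your $p$-local argument for the generation statement works. A global version is also short. Since $q_0=-\sum_{i\ge1}q_i$, you get $q_0\in\frac1{d_0}\ZZ\cap\frac1{\mathrm{lcm}(d_1,\ldots,d_r)}\ZZ=\sum_{i\ge1}\frac1{d_{0i}}\ZZ$. Subtracting a suitable combination of the $\frac1{d_{0i}}(e_0-e_i)$ then kills the $0$-th coordinate and keeps the others in $\frac1{d_i}\ZZ$, and you induct on $r$.

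Step 1, however, needs to be rewritten. $\Quot R$ is not $\kk$-spanned by homogeneous fractions: already $1/(1+x)\in\kk(x)$ is not a combination of Laurent monomials. Also, multiplying by a monomial cannot turn a non-monomial denominator into a monomial. What Step 3 actually uses is that every $\phi\in(\Quot R)^{\TT}$ equals $F/G$ with $F,G\in R$ homogeneous of the same weight. You can cite Popov--Vinberg, Theorem 3.3, as the paper does. Alternatively, the nonzero ideal $\{a\in R: a\phi\in R\}$ is $\TT$-stable, hence contains a nonzero homogeneous $a$, and then $a\phi$ is homogeneous of the same weight.

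The check you postpone at the end is unnecessary. Lift $F,G$ to homogeneous polynomials, which is possible because the defining ideal is homogeneous. Divide both by one fixed monomial occurring in them, and map the Laurent polynomial ring to $\Quot R$. This map is legitimate because the variables are nonzero in the domain $R$, and the relations can only identify elements, never create new invariants.
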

\begin{proof}
    Take a rational ${\TT}$-invariant $f/g\in(\Quot R)^{\TT}$. We may assume both $f$ and $g$ to be ${\TT}$-semi-invariant; see~\cite[Theorem 3.3]{PV}. The assertion will follow from considering the weight of each monomial of $f$ and $g$.

    Note that, for any $i$, ${\TT}$ contains the $(n_i-1)$-dimensional subtorus ${\TT}_i$ acting only on the variables that appear in $T_i^{l_i}$. 
    Fix $i$ and consider a monomial $P$ in the variables $T_{ij}$. Then $P$ has weight zero (with respect to ${\TT}_i$) if and only if $P$ is a power of $T_i^{l_i/d_i}$. More generally, regardless of the weight of $P$, we have $P=Q\cdot(T_i^{l_i/d_i})^\alpha$, where $Q$ is a monomial not divisible by $T_i^{l_i/d_i}$, and $\alpha\in\mathbb Z_{\ge0}$.


    Since, for any $i$, $f$ and $g$ are of the same ${\TT}_i$-weight, these $Q$ are the same for any monomial appearing in $f$ and $g$ and thus may be cancelled. Repeating for all $i$, we represent both $f$ and $g$ as polynomials in the $T_i^{l_i/d_i}$ whose monomials are all of the same ${\TT}$-weight.

    Finally, consider the one-dimensional torus $\kk^\times\subset{\TT}$ acting only on the variables $T_{ib_i}$ (notation as in Remark~\ref{RemarkAct}). Two monomials
    $(T_i^{l_i/d_i})^\alpha$ and $(T_j^{l_j/d_j})^\beta$ are of the same $\kk^\times$-weight if and only if $\alpha/d_i=\beta/d_j$. Thus the ratio of these monomials is a power of 
   $T_i^{l_i/d_{ij}}/ T_j^{l_j/d_{ij}}$.
   
   It remains to divide both $f$ and $g$ by any one of the monomials appearing among them to rewrite $f/g$ as a rational function in the ${T_i^{l_i/d_{ij}}}/ {T_j^{l_j/d_{ij}}}$.
\end{proof}

\subsection*{Locally nilpotent derivations}
The following lemma gathers some basic properties of locally nilpotent derivations. We refer the reader to~\cite[Principles 1, 5, and 7 and Corollary 1.23]{Freu} for the proof.

\begin{lemma}\label{BasicLem}
    Let $R$ be a $\kk$-domain, $\delta$ an LND of $R$ and $f, g \in R.$ Then:
    \begin{enumerate} 
    \item[$(i)$] $f\delta$ is a derivation; it is locally nilpotent if and only if $f\in\ker\delta$.
    \item[$(ii)$] $\ker\delta$ is factorially closed, meaning that $fg \in \ker\delta$ implies $f, g\in\ker\delta$.
    \item[$(iii)$] Suppose that $f\, | \, \delta(f)$; then $\delta(f)=0$.
    \item[$(iv)$] Suppose that $f\, | \, \delta(g)$ and $g\, | \, \delta(f)$; then either $\delta(f)=0$ or $\delta(g)=0$.
    \end{enumerate} 
\end{lemma}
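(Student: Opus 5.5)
The plan is to derive all four assertions from one tool: the degree function attached to $\delta$. For $0\neq g\in R$ set
$$\nu(g)=\max\{n\ge 0 \mid \delta^n(g)\neq 0\},$$
which is finite because $\delta$ is locally nilpotent, and set $\nu(0)=-\infty$. By definition $\nu(g)=0$ exactly when $0\neq g\in\ker\delta$. Moreover, if $\nu(g)\ge 1$ then $\nu(\delta(g))=\nu(g)-1$.

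The key step is additivity: $\nu(fg)=\nu(f)+\nu(g)$ for nonzero $f,g$. Put $m=\nu(f)$ and $n=\nu(g)$, and expand with the Leibniz formula
$$\delta^{N}(fg)=\sum_{k=0}^{N}\binom{N}{k}\delta^k(f)\,\delta^{N-k}(g).$$
For $N>m+n$, every term has $k>m$ or $N-k>n$, so it vanishes. For $N=m+n$ only the term with $k=m$ survives, giving $\binom{m+n}{m}\delta^m(f)\delta^n(g)$. This is nonzero because $R$ is a domain and the characteristic is zero. The same expansion shows that $\ker\delta$ is closed under products, which is needed below. This is the only place where the hypotheses are used in an essential way, so I expect it to be the main (if small) obstacle.

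With additivity in hand, the remaining parts are short.

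$(ii)$ If $fg\in\ker\delta$ is nonzero, then $\nu(f)+\nu(g)=0$. Since both degrees are nonnegative, $\nu(f)=\nu(g)=0$. If $fg=0$, then $f=0$ or $g=0$, because $R$ is a domain. If, say, $g=0$, then $g\in\ker\delta$ automatically, but the other factor need not lie in $\ker\delta$. So I would read $(ii)$ as a statement about $fg\neq 0$, as in~\cite{Freu}.

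$(iii)$ Suppose $\delta(f)=hf$ with $f\neq 0$ and $\delta(f)\neq 0$. Then $h\neq 0$ and $\nu(f)\ge 1$, so
$$\nu(f)-1=\nu(\delta f)=\nu(h)+\nu(f)\ge \nu(f),$$
a contradiction. Hence $\delta(f)=0$.

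$(iv)$ Suppose both $\delta(f)$ and $\delta(g)$ are nonzero. Write $\delta(g)=af$ and $\delta(f)=bg$ with $a,b\neq 0$. Then $\nu(f)\le\nu(\delta g)=\nu(g)-1$ and $\nu(g)\le\nu(\delta f)=\nu(f)-1$. Adding the two inequalities gives $0\le -2$, a contradiction.

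$(i)$ First, $f\delta$ is a derivation by a direct check of the Leibniz rule. If $f\in\ker\delta$, then $f$ commutes with $\delta$, so $(f\delta)^n=f^n\delta^n$ and $f\delta$ is locally nilpotent. Conversely, suppose $D=f\delta$ is locally nilpotent with $f\neq 0$. Then $D(f)=f\,\delta(f)$ is divisible by $f$, so $(iii)$ applied to the LND $D$ gives $D(f)=0$. Thus $f\delta(f)=0$, and $\delta(f)=0$ because $R$ is a domain.
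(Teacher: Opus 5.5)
Your proof is correct. It is essentially the standard argument behind the results the paper cites instead of proving (Freudenburg's Principles 1, 5, 7 and Corollary 1.23): everything follows from the additivity of the degree function $\nu=\deg_\delta$ via the Leibniz formula, and $(i)$ is reduced to $(iii)$ applied to the LND $f\delta$, exactly as you do. Your caveat on $(ii)$ is also right: as literally stated it fails when one factor is $0$, so factorial closedness should be read for $fg\neq 0$, as in Freudenburg's definition.
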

The locally nilpotent derivation $f\delta$, $f\in \ker\delta$, is called a \emph{replica} of $\delta$.

Recall that an Abelian group $G$ is called \emph{ordered} if it is endowed with a total order which is translation invariant, i.e., $a\le b$ implies $a+c\le b+c$ for any $a,b,c\in G$. Clearly, a finitely generated Abelian group can be ordered if and only if it is torsion-free. The following lemma is a particular case of~\cite[Lemma~1.10]{Liendo}.
\begin{lemma}\label{LemmaLNDHomDecomposition}
    Let $G\simeq\ZZ^k$ be an ordered group and $R$ a $G$-graded algebra. Then any LND $\delta\colon R\to R$ admits the decomposition
    $$
    \delta=\sum_{k=1}^p\delta_k, \qquad\deg\delta_1<\ldots<\deg\delta_p
    $$
     where $\delta_k\neq0$ are $G$-homogeneous derivations, and $\delta_1$, $\delta_p$ are LNDs.

    In particular, $R$ is rigid if and only if it admits no nonzero $\mathbb Z^k$-homogeneous LND.
\end{lemma}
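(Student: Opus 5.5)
The plan is to obtain the decomposition by ordering the finitely many degrees of the homogeneous components of $\delta$, and then to show that the extreme components are locally nilpotent by comparing how $\delta^m$ acts on homogeneous elements in the top and bottom degrees. The final claim about rigidity follows at once from the decomposition.

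\emph{Step 1: the decomposition.} Since $R$ is finitely generated, choose homogeneous generators $x_1,\ldots,x_s$ (one may assume this, as in our applications $R$ is generated by homogeneous elements, e.g.\ the $T_{ij},S_k$). Each $\delta(x_i)$ has finitely many homogeneous components. For $e\in G$, define $\delta_e$ on a homogeneous $f$ of degree $g$ as the degree-$(g+e)$ component of $\delta(f)$, and extend linearly. A direct check on homogeneous $f,h$ shows that $\delta_e$ satisfies the Leibniz rule. A derivation is determined by its values on generators, so only finitely many $\delta_e$ are nonzero. Order these degrees as $\deg\delta_1<\ldots<\deg\delta_p$ using the total order on $G$.

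\emph{Step 2: the extreme components are LNDs.} Treat $\delta_p$; the case of $\delta_1$ is symmetric, reversing the order. Let $f$ be homogeneous of degree $g$ and write $e_p=\deg\delta_p$. Expanding $\delta^m=(\sum_k\delta_k)^m$, every word $\delta_{k_1}\cdots\delta_{k_m}$ sends $f$ into degree $g+\sum_j e_{k_j}$. This sum is at most $g+me_p$, with equality exactly when all $k_j=p$, because the order is translation invariant and strict inequalities add. So the degree-$(g+me_p)$ component of $\delta^m(f)$ is precisely $\delta_p^m(f)$. Since $\delta^m(f)=0$ for large $m$, we get $\delta_p^m(f)=0$. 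Every element is a finite sum of homogeneous ones, so $\delta_p$ is locally nilpotent.

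\emph{Step 3: rigidity.} If $R$ has a nonzero LND $\delta$, then $\delta_p\neq0$ is a homogeneous LND. Conversely, a nonzero homogeneous LND is in particular a nonzero LND, so $R$ is not rigid.

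\emph{Main obstacle.} The only delicate point is the well-definedness and finiteness in Step 1. We need the grading to come from homogeneous generators so that the components $\delta_e$ vanish for all but finitely many $e$. Strictness of the order in Step 2 follows from $G$ being torsion-free and totally ordered.
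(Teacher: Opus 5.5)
Your proof is correct. The paper gives no argument of its own here and cites the lemma as a special case of Liendo's Lemma~1.10; your observation that the component of $\delta^m(f)$ in degree $g+m\deg\delta_p$ is exactly $\delta_p^m(f)$ is the standard leading-term argument behind that result. One adjustment: Step~1 needs no extra hypothesis, because in a finitely generated $G$-graded algebra the homogeneous components of any finite generating set already form a homogeneous generating set, while finite generation itself is genuinely needed for the decomposition to have finitely many terms.
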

For the end of this section, let us state some specific properties of ${\TT}$-homogeneous LNDs. 
\begin{lemma}\label{LemmaHLND}
    Let $\delta$ be a $\mathbb T$-homogeneous LND of a trinomial algebra. Then
    \begin{enumerate}
        \item [$(i)$] One has either $\delta(T_{ij})=0$ for all $i,j$, or $\delta(S_k)=0$ for all $k$;
        \item [$(ii)$] Suppose that $\delta(T_{ij})=0$ for all $i,j$; then there exists a $k$ such that $$\delta = \delta(S_{k}){\partial\over\partial S_{k}};$$
        \item[$(iii)$] For each monomial $T_i^{l_i}$ there exists a variable $T_{ic_i}$ such that $\delta(T_{ij})=0$ whenever $j\neq c_i$.
    \end{enumerate}
\end{lemma}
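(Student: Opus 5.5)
The plan is to use the grading of Remark~\ref{RemarkAct} together with the homogeneity of $\delta$ and the coprimality of the variables (Lemma~\ref{LemmaCoprime}). Since $\delta$ is ${\TT}$-homogeneous of some degree $w$, each $\delta(T_{ij})$ and $\delta(S_k)$ is homogeneous, of degree $\deg T_{ij}+w$ and $\deg S_k+w$ respectively. The basis vectors $e_k$ occur only in $\deg S_k$, and the $e_{ij}$ occur only in the degrees of variables of the $i$-th block. This is the source of all the rigidity. I would argue part $(iii)$ first, then $(i)$, then $(ii)$.

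For $(iii)$, fix $i$ and suppose $\delta(T_{ij})\ne0$ and $\delta(T_{ij'})\ne0$ for some $j\ne j'$. The first idea is to apply Lemma~\ref{BasicLem}$(iv)$ to $f=T_{ij}$, $g=T_{ij'}$, so it suffices to show $T_{ij}\mid\delta(T_{ij'})$ and $T_{ij'}\mid\delta(T_{ij})$. To get such divisibility, compare degrees. The element $\delta(T_{ij'})$ has degree $\deg T_{ij'}+w$, while $\delta(T_{ij})$ has degree $\deg T_{ij}+w$. A homogeneous element is a sum of monomials in the variables, all of the same degree. The $e_{ij}$-coordinate of a monomial depends only on its exponents in block $i$. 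The $e_{ij}$ and $e_{ij'}$ coordinates of the two target degrees differ by exactly one ``unit'' coming from the respective variable. Using this, I would try to show that every monomial of $\delta(T_{ij'})$ must contain $T_{ij}$, after reducing modulo the relations so that monomials are normal forms.

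The main obstacle is exactly this step: the relations allow $T_i^{l_i}$ to be rewritten, and the torsion in $\HH/{\TT}$ means the block-$i$ degree alone does not pin down exponents. A cleaner route is to restrict to the subtorus ${\TT}_i$ of Lemma~\ref{LemmaRationalInvariants}. Its weights on the monomials of block $i$ are the exponent vectors modulo multiples of $l_i/d_i$. A ${\TT}_i$-semi-invariant of the weight of $\delta(T_{ij'})$, in a normal form avoiding full powers of $T_i^{l_i/d_i}$, has a unique block-$i$ monomial factor $Q$. The weight relation $\mathrm{wt}\,\delta(T_{ij})-\mathrm{wt}\,\delta(T_{ij'})=\mathrm{wt}\,T_{ij}-\mathrm{wt}\,T_{ij'}$ should then force $T_{ij}\mid Q_{j'}$ or $T_{ij'}\mid Q_j$. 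If only one divisibility holds, say $T_{ij}\mid\delta(T_{ij'})$, I would instead use it with Lemma~\ref{BasicLem}$(iii)$ on the product $T_{ij}T_{ij'}$, or iterate to derive a contradiction via degree considerations of $\delta^m$. I would accept some case analysis here.

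For $(i)$ and $(ii)$ I would use the same degree bookkeeping with the coordinates $e_k$. If $\delta(S_k)\ne0$, its degree is $e_k+w$. If also $\delta(T_{ij})\ne0$, then $w=\deg\delta(T_{ij})-\deg T_{ij}$. The resulting equation would force $S_k\mid\delta(T_{ij})$ and $T_{ij}\mid\delta(S_k)$, since $S_k$ enters the relations freely. Lemma~\ref{BasicLem}$(iv)$ then yields a contradiction, and the same argument with two different $S_k,S_{k'}$ gives $(ii)$. In case $(ii)$ all $T_{ij}$ lie in the kernel. Since the $S_k$ do not appear in the relations, $R=R_0[S_1,\dots,S_d]$, so $\delta$ is determined by the $\delta(S_k)$, and the pairwise argument leaves only one nonzero $\delta(S_k)$.
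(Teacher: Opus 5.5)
\textbf{Gap.} The proposal aims for the wrong divisibilities throughout.

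\textbf{Part $(iii)$.} You try to extract from the weights the cross relations $T_{ij}\mid\delta(T_{ij'})$ and $T_{ij'}\mid\delta(T_{ij})$, or at least one of them, so as to invoke Lemma~\ref{BasicLem}$(iv)$. Weights cannot force this. In your own ${\TT}_i$-notation, take $Q_j=T_{ij}$ and $Q_{j'}=T_{ij'}$. These satisfy $\mathrm{wt}\,Q_j-\mathrm{wt}\,Q_{j'}=\mathrm{wt}\,T_{ij}-\mathrm{wt}\,T_{ij'}$ but violate both cross divisibilities, so no degree bookkeeping yields them.

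What the weights do force is \emph{self}-divisibility. Consider the one-parameter subgroup $t\cdot T_{ij}=t^{l_{ij'}}T_{ij}$, $t\cdot T_{ij'}=t^{-l_{ij}}T_{ij'}$, fixing all other variables. It preserves every $T_p^{l_p}$ and is connected, so it lies in ${\TT}$. It gives a $\ZZ$-grading of $R$ with homogeneous relations, so homogeneous elements lift to homogeneous polynomials. In this grading, a homogeneous element of positive degree is divisible by $T_{ij}$, and one of negative degree is divisible by $T_{ij'}$. Let $w\in\ZZ$ be the degree of $\delta$. If $w\ge0$ then $T_{ij}\mid\delta(T_{ij})$; if $w\le0$ then $T_{ij'}\mid\delta(T_{ij'})$. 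In either case Lemma~\ref{BasicLem}$(iii)$, not $(iv)$, kills one of the two.

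Your fallback via $T_{ij}T_{ij'}$ does not rescue the cross version. By coprimality, $T_{ij}T_{ij'}\mid\delta(T_{ij}T_{ij'})$ is equivalent to the two self-divisibilities together. Also, $R$ has no normal form avoiding powers of $T_i^{l_i/d_i}$, only powers of $T_i^{l_i}$. You do not need one, since you can work with homogeneous polynomial representatives.

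\textbf{Parts $(i)$ and $(ii)$.} The same flaw recurs. Nothing forces $T_{ij}\mid\delta(S_k)$: in Type~$1$ with $n_i=1$ the variable $T_{i1}$ has ${\TT}$-degree $0$, so no weight argument can detect it. Likewise $S_k\mid\delta(T_{ij})$ fails whenever the $e_k$-coordinate $w_k$ of $\deg\delta$ equals $0$.

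Use instead the coarsening given by the $e_k$-coordinate, in which $R=\bigoplus_{m\ge0}R'S_k^m$ with $R'$ generated by the other variables:
\begin{itemize}
\item If $w_k\ge0$, then $S_k\mid\delta(S_k)$, hence $\delta(S_k)=0$ by Lemma~\ref{BasicLem}$(iii)$.
\item If $w_k=-1$, every other generator is sent to degree $-1$, hence to $0$, so $\delta=\delta(S_k)\,\partial/\partial S_k$.
\item If $w_k\le-2$, then $\delta=0$.
\end{itemize}
This proves $(i)$ and $(ii)$ at once, and the two-$S$ case needs no separate argument. Coarsenings by subgroups lying in ${\TT}$, as here, are the kind of argument the paper refers to when it transfers the $\HH$-homogeneous proof to ${\TT}$.
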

The lemma is proved in~\cite[Lemmas~6, 7, 8]{R} for the case when $\delta$ is an $\HH$-homogeneous LND. The proof proceeds by considering the gradings by certain subgroups of the character group of $\HH$. One readily verifies that these subgroups are in fact contained in the character lattice of ${\TT}$, so the same argument applies to the case of ${\TT}$-homogeneous LNDs.

\section{Main result on trinomial algebras of Type $1$}
\begin{construction}\label{ConstrDer1}
   Let $R$ be a trinomial algebra of Type~$1$. Suppose that there exists a tuple $C = (c_1, \ldots, c_r)$, $1\le c_i \le n_i$, such that $l_{ic_i}=1$ for all $i$ except maybe one $i_0$. We set
         \begin{equation*}
             \begin{split}
                 \delta_C(T_{ij}) &= \begin{cases}
                 \prod\limits_{k\neq i}\frac {\partial T_k^{l_k}}{\partial T_{kc_k}},&\text{$j=c_i$,}\\
                 0, &\text{$j \neq c_i$;}
                 \end{cases}\\
                 \delta_C(S_k) &= 0,\quad \text{all } k.
             \end{split}
         \end{equation*}
        There is a unique way to extend this assignment to a derivation of $\kk[T_{ij}, S_k]$ by the Leibniz rule. It is straightforward to show that the derivations obtained in this way are locally nilpotent (being nilpotent at the variables), well-defined on the quotient algebra $R$ (taking the relations between the $T_{ij}$ to $0$), and ${\TT}$-homogeneous (taking the variables to ${\TT}$-homogeneous elements). 
\end{construction}

\begin{theorem}\label{TheoremType1}
    Let $R$ be a trinomial algebra of Type~$1$.
    \begin{enumerate}
        \item [$(i)$] Any $\mathbb T$-homogeneous LND $\delta$ of $R$ is of the form: $$\delta=h{\partial\over\partial S_k} \qquad\textit{or}\qquad\delta=h\delta_C$$ 
    for some ${\TT}$-homogeneous $h$ in $\ker\partial/\partial S_k$ or respectively $\ker\delta_C$.
        \item [$(ii)$] Any ${\TT}$-homogeneous element $h\in\ker\partial/\partial S_k$ is of the form:     
        $$h=F\prod T_{ij}^{\alpha_{ij}}\prod\limits_{p\neq k} S_p^{\gamma_p};$$
        here $F$ is a polynomial in ${T_{i}^{l_{i}/d_i}}, \,i=1,\ldots,r$, $d_i=\gcd(l_{i1},\ldots,l_{in_i})$, and $\alpha_{ij},\,\gamma_p\in\ZZ_{\ge0}$.
        \item [$(iii)$] Any ${\TT}$-homogeneous element $h\in\ker\delta_C$ is of the form:
$$
    h=\lambda\prod\limits_{j\neq c_i} T_{ij}^{\alpha_{ij}}\prod S_p^{\gamma_p};
    $$
    here $\lambda\in\kk$ and $\alpha_{ij},\gamma_p\in\mathbb Z_{\ge0}$.
    \end{enumerate}        
\end{theorem}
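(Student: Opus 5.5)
The plan is to reduce everything to Lemma~\ref{LemmaHLND} and then pin down $\delta$ on the distinguished variables using the relations~\eqref{equations1} and coprimality (Lemma~\ref{LemmaCoprime}).

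\emph{Part $(i)$.} By Lemma~\ref{LemmaHLND}$(i)$, either all $\delta(T_{ij})=0$ or all $\delta(S_k)=0$.

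In the first case, Lemma~\ref{LemmaHLND}$(ii)$ gives $\delta=\delta(S_k)\,\partial/\partial S_k$. Put $h=\delta(S_k)$. Since $\delta^2(S_k)=0$ for a suitable power argument, and more directly since $\delta(S_k)$ divides $\delta(\delta(S_k))$, Lemma~\ref{BasicLem}$(iii)$ shows $h\in\ker\delta$. Then $h$ lies in the kernel of $\partial/\partial S_k$. It is ${\TT}$-homogeneous because $\delta$ and $S_k$ are.

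In the second case, Lemma~\ref{LemmaHLND}$(iii)$ gives indices $c_i$ with $\delta(T_{ij})=0$ for $j\ne c_i$. Apply $\delta$ to the relations $T_i^{l_i}-T_{i+1}^{l_{i+1}}=\mathrm{const}$. This shows that
\[
\delta(T_i^{l_i})=\frac{\partial T_i^{l_i}}{\partial T_{ic_i}}\,\delta(T_{ic_i})=:g
\]
does not depend on $i$.

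If $\delta(T_{i c_i})=0$ for one $i$, then $g=0$, so $\delta(T_{ic_i})=0$ for all $i$ and $\delta=0$. So assume every $\delta(T_{ic_i})\ne0$.

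The key step is a divisibility argument. Let $P_i=\partial T_i^{l_i}/\partial T_{ic_i}$. Each $P_i$ is a monomial in the variables of block $i$, and these are coprime to the variables of the other blocks by Lemma~\ref{LemmaCoprime}. From $P_i\,\delta(T_{ic_i})=P_k\,\delta(T_{kc_k})$, I expect to deduce that $\prod_{k\ne i}P_k$ divides $\delta(T_{ic_i})$. To handle the fact that coprimality is stated only for single variables, I will peel off one variable at a time.

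This yields $\delta(T_{ic_i})=h\prod_{k\ne i}P_k$, with one common quotient $h$ obtained by comparing the $i$-th equations. It remains to show $l_{ic_i}=1$ for all but one $i$. Suppose $l_{ic_i}\ge2$ and $l_{kc_k}\ge2$ for $i\ne k$. Then $T_{ic_i}$ divides $P_i$, which divides $\delta(T_{kc_k})$, and likewise $T_{kc_k}$ divides $\delta(T_{ic_i})$. Lemma~\ref{BasicLem}$(iv)$ then forces one of these derivatives to vanish, a contradiction.

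Hence $\delta=h\delta_C$. Since $\delta_C(h)$ is a combination of derivatives of variables, Lemma~\ref{BasicLem}$(i)$ applied to the LND $\delta$ shows $h\in\ker\delta_C$. Here $\ker\delta=\ker\delta_C$ because the two derivations differ by the factor $h$.

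\emph{Parts $(ii)$ and $(iii)$: computing the kernels.} For $(ii)$, $\ker\partial/\partial S_k$ is the trinomial algebra in the remaining variables, i.e.\ the $S_p$ ($p\ne k$) and the $T_{ij}$. The ${\TT}$-semi-invariants there are combinations of monomials of a fixed weight. Under the subtori ${\TT}_i$ of Lemma~\ref{LemmaRationalInvariants}, and using that $T_i^{l_i}$ is invariant in Type~1, two monomials of the same weight differ by a ratio of powers of the $T_i^{l_i/d_i}$. Factoring out a common monomial gives the stated form.

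For $(iii)$, I plan to show that $\ker\delta_C$ is generated by the $T_{ij}$ ($j\ne c_i$) and the $S_p$. First, these lie in the kernel. Second, $\ker\delta_C$ has transcendence degree $\dim R-1=n+d-r$, which matches the count of these variables. Third, the kernel is factorially closed by Lemma~\ref{BasicLem}$(ii)$. A ${\TT}$-homogeneous kernel element is algebraic over $\kk(T_{ij},S_p\colon j\ne c_i)$. Since $\delta_C(T_{ic_i})\ne0$, it cannot involve the $T_{ic_i}$ nontrivially. To make this precise I would use the isomorphism with the localization in which the distinguished variables are expressed polynomially.

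Weight considerations then force a single monomial: no nontrivial relation among the weights of these free variables gives equal weight to distinct monomials. I expect this weight-independence claim to be the main obstacle. It requires checking that the degrees of the $T_{ij}$ ($j\neq c_i$) and the $S_p$ from Remark~\ref{RemarkAct} are linearly independent.
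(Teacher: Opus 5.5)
Parts $(i)$ and $(ii)$ follow the paper. In the $S$-case you use Lemma~\ref{LemmaHLND}$(ii)$. Your argument $\delta(S_k)\mid\delta(\delta(S_k))$ with Lemma~\ref{BasicLem}$(iii)$ is fine, though the ``power argument'' remark adds nothing. In the $T$-case you reproduce Lemma~\ref{PropositionGoodDerivationType1}, only deriving $\delta(T_{ic_i})=h\prod_{k\neq i}P_k$ before, rather than after, showing $l_{ic_i}=1$ for all but one $i$. Two small fixes there:
\begin{itemize}
\item Lemma~\ref{BasicLem}$(i)$ should be applied to the LND $\delta_C$: $h\delta_C$ is locally nilpotent iff $h\in\ker\delta_C$.
\item You never say why $h$ is $\TT$-homogeneous in this case. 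It is, because $h\,\delta_C(T_{ic_i})=\delta(T_{ic_i})$ with both $\delta_C(T_{ic_i})$ and $\delta(T_{ic_i})$ nonzero and homogeneous in the graded domain $R$.
\end{itemize}

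For $(iii)$ your route genuinely differs from the paper's, and your sketch becomes a proof once you run it through the localization you mention. The paper reuses the normal form $h=F\cdot(\text{monomial})$ from $(ii)$:
\begin{itemize}
\item factorial closedness removes the $T_{ic_i}$ from the monomial;
\item the relations together with $d_i=1$ for $i\neq i_0$ make $F$ a polynomial in the single element $T_{i_0}^{l_{i_0}/d_{i_0}}$;
\item $\delta_C(F)=\delta_C(T_{i_0}^{l_{i_0}/d_{i_0}})F'$ then forces $F$ to be constant.
\end{itemize}
You instead invert the $T_{ij}$ with $j\neq c_i$. Set $t=T_{i_0c_{i_0}}$ and $L=\kk[T_{ij}^{\pm1},S_p\mid j\neq c_i]$. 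Each $T_{ic_i}$ with $i\neq i_0$ becomes a polynomial in $t$ over $L$. So the localization is $L[t]$, since it is a surjective image of $L[t]$ and both are domains of the same dimension. On it $\delta_C$ is a unit of $L$ times $d/dt$, so $\ker\delta_C=R\cap L$.

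This is more structural than the paper's three-line argument. It shows $\delta_C$ is a partial derivative up to localization, and it gives $\ker\delta_C=\kk[T_{ij},S_p\mid j\neq c_i]$ outright, which is the fact used later in Lemma~\ref{LemmaMLweek}. The cost is the localization isomorphism plus one extra step. Your transcendence-degree and factorial-closedness remarks become unnecessary once you have the localization.

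The step you flag as the main obstacle is immediate. Take $b_i=c_i$ in Remark~\ref{RemarkAct}; then $\deg T_{ij}=(\prod_{k\neq j}l_{ik})e_{ij}$ and $\deg S_p=e_p$ are multiples of distinct basis vectors, so distinct Laurent monomials in these variables have distinct weights.

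The step you actually omit is that a homogeneous element of $R\cap L$, which is a Laurent monomial $\lambda M/N$, has no negative exponents. Suppose $N\neq1$. Then some variable $x\mid N$ divides in $R$ the monomial $M$, which involves only other variables. Peeling those variables off one at a time with Lemma~\ref{LemmaCoprime} gives $x\mid 1$, which is impossible since $x$ is not a unit.
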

The proof of Theorem~\ref{TheoremType1} is given after one preliminary result, which will be useful also in the next section; note that in the following lemma, we do not require that $\delta$ is ${\TT}$-homogeneous.



\begin{lemma}\label{PropositionGoodDerivationType1}
    Let $R$ be a trinomial algebra of Type~1 and $\delta\neq 0$ an LND of R possessing the following properties:
    \begin{enumerate}
        \item [$(a)$] $\delta(S_k)=0$ for all $k$;
        \item[$(b)$] there exists a tuple $C=(c_1,\ldots,c_r)$ such that $\delta(T_{ij})=0$ whenever $j\neq c_i$.
    \end{enumerate}
    Then $\delta=h\delta_C$ for a suitable $h\in\ker\delta_C$.
\end{lemma}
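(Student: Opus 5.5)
Differentiating the relations is the natural starting point. Write $f_i = \partial T_i^{l_i}/\partial T_{ic_i}$. Applying $\delta$ to $T_i^{l_i}-T_{i+1}^{l_{i+1}}=a_{i+1}-a_i$ and using $(b)$ gives
$$
f_i\,\delta(T_{ic_i}) = f_{i+1}\,\delta(T_{i+1,c_{i+1}}),\qquad i=1,\ldots,r-1,
$$
so the element $u=f_i\delta(T_{ic_i})$ does not depend on $i$. The aim is to show that $u$ is divisible by $\prod_k f_k$, that is, $u=h\prod_k f_k$. Then $\delta(T_{ic_i})=h\prod_{k\ne i}f_k=h\,\delta_C(T_{ic_i})$. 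Since $\delta$ and $h\delta_C$ also agree (both being zero) on the remaining variables and on the $S_k$, we get $\delta=h\delta_C$.

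Divisibility comes from the coprimality of variables (Lemma~\ref{LemmaCoprime}). Each $f_i$ is, up to a scalar, a monomial in the variables $T_{ij}$ of the $i$-th group; it is a nonzero scalar when $l_{ic_i}=1$ and $n_i=1$. Monomials in disjoint sets of variables are coprime, by induction on degree from Lemma~\ref{LemmaCoprime}: if $fa=gb$ with $a=T_{ij}$, then $a\mid g b'$ for each variable factor $b'$ of $b$, and we peel factors off one at a time. Now $u=f_i\delta(T_{ic_i})=f_k\delta(T_{kc_k})$ for $i\neq k$ gives $f_k\mid\delta(T_{ic_i})$. Iterating over all $k\neq i$, with the product of pairwise coprime monomials handled again by peeling variables, yields $\prod_{k\ne i}f_k\mid\delta(T_{ic_i})$. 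Write $\delta(T_{ic_i})=h_i\prod_{k\ne i}f_k$. Then $u=h_i\prod_k f_k$, and since $R$ is a domain, $h_i=h$ is the same for all $i$. Alternatively, all of this can be done at once in the $\HH$-factorially graded setting used in the proof of Lemma~\ref{LemmaCoprime}.

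It remains to check $h\in\ker\delta_C$. By Lemma~\ref{BasicLem}$(i)$, it suffices that $h\delta_C$ is locally nilpotent and $\delta_C\neq0$ implies $h\in\ker\delta_C$. This needs care, since that statement assumes $\delta_C$ is an LND and deduces the kernel condition from the replica being an LND. So I will argue directly instead. First, $\delta(T_{ij})=0$ for $j\ne c_i$ and $\delta(S_k)=0$. Second, $\delta(f_i)=0$, because $f_i$ involves $T_{ic_i}$ only through the power $l_{ic_i}-1$; this is where the hypothesis that $l_{ic_i}=1$ for all but one $i_0$ enters. Indeed, for $i\ne i_0$ the monomial $f_i$ lies in the subalgebra generated by kernel elements. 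For $i_0$, $\delta(T_{i_0c_{i_0}}) = h\prod_{k\ne i_0}f_k$, and $\prod_{k\ne i_0}f_k\in\ker\delta$. Hence $\delta^2(T_{i_0c_{i_0}})=\delta(h)\prod_{k\ne i_0}f_k$.

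The main obstacle is showing $\delta(h)=0$. I would argue by degree with respect to $\delta$ (the function $\nu(x)=\min\{n:\delta^{n+1}x=0\}$, which is additive on a domain). Since $\prod_{k\neq i}f_k\in\ker\delta$ for some $i\ne i_0$ (using $r\ge2$), we have $\nu(\delta(T_{ic_i}))=\nu(h)$. For $i\ne i_0$, $T_{ic_i}$ appears linearly in the relation, so $T_{ic_i}$ is expressible through $T_{i_0}^{l_{i_0}}$ and kernel elements. This gives $\nu(T_{ic_i})\le\nu(T_{i_0}^{l_{i_0}})=l_{i_0c_{i_0}}\nu(T_{i_0c_{i_0}})$, and comparing such degree equations for $i$ and $i_0$ forces $\nu(T_{ic_i})=1$. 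Then $\nu(h)=\nu(T_{ic_i})-1=0$, i.e.\ $h\in\ker\delta$. Since $\delta$ and $\delta_C$ agree up to the factor $h$ and both kill $h$'s building blocks, $\delta_C(h)=0$ follows as well. If this degree bookkeeping fails, the fallback is to note that $\ker\delta$ has transcendence degree $\dim R-1$ and contains all $T_{ij}$ ($j\ne c_i$), the $S_k$, and $T_{i_0}^{l_{i_0}}-T_i^{l_i}$, so it equals $\ker\delta_C$; then $h\in\ker\delta$ gives the claim.
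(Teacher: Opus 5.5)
There is a genuine gap: you never prove that $l_{ic_i}=1$ for all $i$ except at most one index $i_0$. You treat this as a hypothesis (``this is where the hypothesis \ldots\ enters''), but it is not one. Hypothesis $(b)$ only supplies some tuple $C$, and in the proof of Theorem~\ref{TheoremType1} that tuple comes from Lemma~\ref{LemmaHLND}$(iii)$ with no restriction on the exponents. Showing that $C$ satisfies the assumption of Construction~\ref{ConstrDer1} is part of what the lemma asserts, and Corollary~\ref{CorRigidiryType1} depends on it.

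Without this condition, the formulas for $\delta_C$ still define a derivation of $R$, but not an LND. For example, for $T_1^2-T_2^2=1$ one gets $\delta_C(T_1)=2T_2$ and $\delta_C(T_2)=2T_1$. So your conclusion $\delta=h\delta_C$ does not produce the LND of the construction. The missing argument is short:
\begin{enumerate}
\item[(1)] Every $\delta(T_{ic_i})$ is nonzero. If one vanished, the relations $f_i\delta(T_{ic_i})=f_j\delta(T_{jc_j})$, with all $f_j\neq0$ in the domain $R$, would force all of them to vanish, giving $\delta=0$.
\item[(2)] Suppose $l_{ic_i}>1$ and $l_{jc_j}>1$ for some $i\neq j$. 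Then $T_{ic_i}\mid f_i$ and $T_{jc_j}\mid f_j$. Coprimality gives $T_{ic_i}\mid\delta(T_{jc_j})$ and $T_{jc_j}\mid\delta(T_{ic_i})$. This contradicts (1) via Lemma~\ref{BasicLem}$(iv)$.
\end{enumerate}

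Once this is established, $\delta_C$ is an LND, and $h\in\ker\delta_C$ follows immediately from Lemma~\ref{BasicLem}$(i)$ applied to $\delta_C$ and its replica $h\delta_C=\delta$. The extra care you wanted is unnecessary.

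Your substitute argument via the degree function $\nu$ fails in any case, for three reasons.
\begin{enumerate}
\item[(1)] When $l_{i_0c_{i_0}}>1$, the product $\prod_{k\neq i}f_k$ lies in $\ker\delta$ only for $i=i_0$, not for $i\neq i_0$, since $f_{i_0}$ contains $T_{i_0c_{i_0}}$.
\item[(2)] For $i\neq i_0$ one actually has $\nu(T_{ic_i})=l_{i_0c_{i_0}}\,\nu(T_{i_0c_{i_0}})$, which is generally not $1$. In the paper's setting, $T_{11}^2T_{12}-T_{21}=1$ gives $\nu(T_{21})=2$.
\item[(3)] All the degree equations you can write collapse to $\nu(T_{i_0c_{i_0}})=\nu(h)+1$, which does not force $\nu(h)=0$.
\end{enumerate}
The fallback is circular: it ends with ``then $h\in\ker\delta$ gives the claim,'' which is exactly what was to be shown. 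Also, $T_{i_0}^{l_{i_0}}-T_i^{l_i}$ is a constant in $R$, so adding it to the kernel contributes nothing.

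Your divisibility step, giving $\delta(T_{ic_i})=h\prod_{k\neq i}f_k$ with a common $h$, is correct and matches the paper.
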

\begin{proof}
    For each pair $i, j$, the equation
    $
    T_i^{l_i}-T_j^{l_j}+a_i-a_j=0
    $
    holds. Applying $\delta$, we obtain:
    \begin{equation}\label{EqDeltaType1}
        \delta(T_{ic_i}){\partial T_i^{l_i}\over\partial T_{ic_i}}=\delta(T_{jc_j}){\partial T_j^{l_j}\over\partial T_{jc_j}}.
        \tag{2.1}
    \end{equation}    
    We claim that $\delta(T_{ic_i})\neq 0$ for all $i$. In fact, suppose that $\delta(T_{ic_i})=0$ for some $i$; then~\eqref{EqDeltaType1} implies that $\delta(T_{jc_j})=0$ for all $j$. But this means $\delta=0$, a contradiction.
    
    Now we claim that $l_{ic_i}=1$ for all $i$ except maybe one $i_0$. Assume the converse: $l_{ic_i}>1$ and $l_{jc_j}>1$ for two $i,j$. Then $T_{ic_i}$ divides the left-hand side of~(\ref{EqDeltaType1}), and $T_{jc_j}$ divides the right-hand side. Since the variables $T_{pq}$ are pairwise coprime (Lemma~\ref{LemmaCoprime}), we have
    $   T_{ic_i}\,|\,\delta(T_{j{c_j}})$ and $ T_{jc_j}\,|\,\delta(T_{ic_i}).
    $
    Then Lemma~\ref{BasicLem}$(iv)$ asserts that $\delta(T_{ic_i})=0$ or $\delta(T_{jc_j})=0$, contradicting our previous argument.

    We have verified that the tuple~$C$ satisfies the assumption of Construction~\ref{ConstrDer1}, i.e., the derivation~$\delta_C$ is well-defined. By~(\ref{EqDeltaType1}) we also have, for any pair $i\neq j$: $${\partial T_j^{l_j}\over\partial T_{jc_j}}\mid\delta(T_{ic_i}).$$ Then coprimeness of the variables (Lemma~\ref{LemmaCoprime}) implies that 
    $$\delta(T_{ic_{i}})=h_i\prod_{k\neq {i}} {\partial T_k^{l_k}\over\partial T_{kc_k}},
    \qquad i=1,\ldots,r.
    $$  
    Using~(\ref{EqDeltaType1}) again, we obtain $h_i=h_j=:h$ for all $i,j$. Thus $\delta=h\delta_C$. It follows that $h\in\ker\delta$ by Lemma~\ref{BasicLem}$(i)$.
\end{proof}
\begin{proof}[Proof of Theorem~\ref{TheoremType1}]
    \emph{Part $(i)$}. Let $\delta$ be a nonzero $\mathbb T$-homogeneous LND of $R$. Lemma~\ref{LemmaHLND}$(i)$ leaves two opportunities: 
    \begin{equation*}
        (a)\quad\delta(T_{ij})=0 \mbox{ for all $i,j$}\qquad\mbox{or} \qquad(b)\quad\delta(S_k)=0 \mbox{  for all $k$.}
    \end{equation*}
    In case $(a)$, it follows from Lemma~\ref{LemmaHLND}$(ii)$ that $\delta$ is a replica of some $\partial/\partial S_k$. 
    
    Consider case $(b)$. By Lemma~\ref{LemmaHLND}$(iii)$, there exists a tuple $C$ such that $T_{ij}\in\ker\delta$ whenever $j\neq c_i$. Then Lemma~\ref{PropositionGoodDerivationType1} asserts that $\delta=h\delta_C$ for a suitable $h\in\ker\delta_C$. Since both $\delta$ and $\delta_C$ are homogeneous, it follows that $h$ is. This concludes the proof of part $(i)$.

    \emph{Parts $(ii)$ and $(iii)$.} Any monomial of weight zero with respect to ${\TT}$ is easily seen to be a product of some $T_{i}^{l_{i}/d_i}$. It follows that each ${\TT}$-homogeneous element $h$ has the form $$h=F\prod T_{ij}^{\alpha_{ij}}\prod S_p^{\gamma_p},$$ where $F$ is a polynomial in $T_{i}^{l_{i}/d_i}, \,i=1,\ldots,r$. The kernel of an LND is factorially closed (Lemma~\ref{BasicLem}). Thus we have $h\in\ker{\partial/\partial S_k}$ if and only if $\gamma_k=0$. This proves part $(ii)$.

    In the same way, since $\ker\delta_C$ is factorially closed, we conclude that $h\in\ker\delta_C$ if and only if all $\alpha_{ic_i}=0$ and $\delta_C(F)=0$. Let us rewrite the latter condition on $F$ more explicitly. According to Construction~\ref{ConstrDer1}, there exists an $i_0$ such that $l_{ic_i}=1$ for $i\neq i_0$. We may assume that $i_0=1$, i.e., $T_{i}^{l_{i}/d_i}=T_i^{l_i}$ whenever $i\neq 1$. By relations~\eqref{equations1}, each $T_{i}^{l_i}$ is the sum of $T_{1}^{l_{1}}$ and a constant. Hence $F$ is a polynomial in $T_{1}^{l_{1}/d_1}$. Using the Leibniz rule, we get:
    $$
    0=\delta_C(F) = \delta_C\left(T_{1}^{l_{1}/d_1}\,\right) F',
    $$
    $F'$ denoting the derivative of $F$ with respect to $T_{1}^{l_{1}/d_1}$. By construction, $\delta_C(T_{1}^{l_{1}/d_1})\neq0$. Thus we have $\delta_C(F)=0$ if and only if $F=\mathrm{const}$.
\end{proof}

Theorem~\ref{TheoremType1} provides another proof of the rigidity criterion for trinomial algebras of Type~$1$, stated and proved in a different way in~\cite[Theorem 3]{G}.
\begin{corollary}\label{CorRigidiryType1}
    Let $R$ be a trinomial algebras of Type~$1$. Then $R$ is not rigid if and only if one of the following two conditions holds:
    \begin{enumerate}
        \item[$(a)$] $d>0$, i.e., there exists $S_k\in R$;
        \item[$(b)$] One can find an $i_0\in\{ 1,\ldots,r \}$ such that, for all $i\neq i_0$,  there exist $c_i\in\{1,\ldots,n_i\}$ with $l_{ic_i}=1$;
    \end{enumerate}
\end{corollary}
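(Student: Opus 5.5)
The plan is to reduce the rigidity question to ${\TT}$-homogeneous LNDs and then read off existence from Theorem~\ref{TheoremType1}. Since ${\TT}$ is a torus, its character lattice is free, say $\ZZ^k$, and can be given a translation-invariant total order. By the final assertion of Lemma~\ref{LemmaLNDHomDecomposition}, $R$ is not rigid if and only if $R$ admits a nonzero ${\TT}$-homogeneous LND. It therefore suffices to show that such an LND exists if and only if $(a)$ or $(b)$ holds.

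For sufficiency, I will exhibit an LND in each case. If $(a)$ holds, the variable $S_1$ exists. The derivation $\partial/\partial S_1$ is well defined on $R$, because relations~\eqref{equations1} do not involve the $S_k$. It is locally nilpotent and ${\TT}$-homogeneous, and it is nonzero since $S_1\notin\kk$. If $(b)$ holds, I choose $c_i$ with $l_{ic_i}=1$ for $i\neq i_0$ and pick $c_{i_0}$ arbitrarily. Then the tuple $C=(c_1,\ldots,c_r)$ meets the hypothesis of Construction~\ref{ConstrDer1}, so $\delta_C$ is a ${\TT}$-homogeneous LND of $R$.

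The step needing some care is checking that $\delta_C\neq0$ in $R$. Its value $\delta_C(T_{i_0c_{i_0}})$ is the product over $k\neq i_0$ of the partial derivatives $\partial T_k^{l_k}/\partial T_{kc_k}$. Each of these factors is a monomial in the variables, because $l_{kc_k}=1$. Since $R$ is a domain and the variables are nonzero in $R$ (they are homogeneously prime, as recalled in the proof of Lemma~\ref{LemmaCoprime}), this monomial is nonzero.

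For necessity, let $\delta\neq0$ be a ${\TT}$-homogeneous LND. By Theorem~\ref{TheoremType1}$(i)$, either $\delta=h\,\partial/\partial S_k$ or $\delta=h\delta_C$. In the first case some $S_k$ exists, so $d>0$ and $(a)$ holds. In the second case $\delta_C$ exists, and by Construction~\ref{ConstrDer1} its tuple $C$ satisfies $l_{ic_i}=1$ for all $i$ except possibly one index $i_0$; this is exactly $(b)$.

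I expect no serious obstacle here, since everything reduces to the earlier results. The only delicate points are the nonvanishing of $\delta_C$ in $R$ and the justification that the character lattice of ${\TT}$ can be ordered, which holds because it is free of finite rank.
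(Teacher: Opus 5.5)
Your proposal is correct and follows the paper's argument. You reduce to ${\TT}$-homogeneous LNDs via Lemma~\ref{LemmaLNDHomDecomposition}, then read off necessity from Theorem~\ref{TheoremType1}$(i)$ and sufficiency from $\partial/\partial S_k$ and $\delta_C$ of Construction~\ref{ConstrDer1}. The paper gives this in two lines; your check that $\delta_C\neq0$ in $R$ only spells out a detail it leaves implicit.
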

\begin{proof}
    By Lemma~\ref{LemmaLNDHomDecomposition}, $R$ is rigid if and only if it admits no ${\TT}$-homogeneous LND. The assertion follows.    
\end{proof}

\section{Semirigid trinomial algebras of Type~$1$}
Here we apply Theorem~\ref{TheoremType1} to obtain a criterion for a trinomial algebra of Type~$1$ to be semirigid, following the idea of~\cite[Section~6]{G_Danielewski}. Recall that the \emph{Makar-Limanov invariant} of an algebra $R$, abbreviated $\ML(R)$, is the intersection of the kernels of all LNDs of~$R$. 
\begin{lemma}\label{LemmaMLweek}
    Let $R$ be a trinomial algebra of Type~$1$. Suppose that the following conditions hold:
    \begin{enumerate}
        \item [$(a)$] $d=0$, i.e., one has no free variable $S_k\in R$
        \item [$(b)$] There exists an $i_0$ such that $T_{i_0}^{l_{i_0}}=T_{i_01}^{l_{i_01}}$ and $l_{i_01}>1$.
        \item [$(c)$] For each $i\neq i_0$, there exists a $c_i$ such that $l_{ic_i}=1$ and $l_{ij}>1$ whenever $j\neq c_i$.
    \end{enumerate}
    Then $\ML (R)=\kk[\,T_{ij}\mid i\neq i_0, \,j\neq c_i\,].$
\end{lemma}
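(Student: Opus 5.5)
The plan is to prove the two inclusions separately, using Theorem~\ref{TheoremType1} to control all ${\TT}$-homogeneous LNDs and Lemma~\ref{LemmaLNDHomDecomposition} to pass to arbitrary LNDs.

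\emph{Step 1: uniqueness of the tuple $C$.} By $(a)$ there are no variables $S_k$. So by Theorem~\ref{TheoremType1}$(i)$ every nonzero ${\TT}$-homogeneous LND is of the form $h\delta_C$, with $h\in\ker\delta_C$ and $C$ a tuple as in Construction~\ref{ConstrDer1}. Such a tuple needs $l_{ic_i}=1$ for all $i$ except possibly one exceptional index.
\begin{itemize}
\item By $(b)$, $n_{i_0}=1$ and $l_{i_01}>1$, so the exceptional index must be $i_0$, and $c_{i_0}=1$.
\item For $i\ne i_0$, condition $(c)$ says there is exactly one $j$ with $l_{ij}=1$, so $c_i$ is forced.
\end{itemize}
Hence $C$ is unique, and every nonzero ${\TT}$-homogeneous LND is a replica of this single $\delta_C$. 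In particular all such LNDs have the same kernel $\ker\delta_C$, since $\ker(h\delta_C)=\ker\delta_C$ for $0\neq h\in\ker\delta_C$.

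\emph{Step 2: the inclusion $\subseteq$.} Since $\delta_C$ is ${\TT}$-homogeneous, $\ker\delta_C$ is a graded subalgebra. By Theorem~\ref{TheoremType1}$(iii)$, its homogeneous elements are scalar multiples of monomials in the $T_{ij}$ with $j\neq c_i$. For $i=i_0$ there is no such variable, since $n_{i_0}=1$ and $c_{i_0}=1$. Hence
$$\ker\delta_C=\kk[\,T_{ij}\mid i\neq i_0,\ j\neq c_i\,],$$
and therefore $\ML(R)\subseteq\ker\delta_C$.

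\emph{Step 3: the inclusion $\supseteq$.} Here I must show that an arbitrary LND $\delta$ annihilates each $T_{ij}$ with $i\ne i_0$ and $j\ne c_i$. I will follow the degeneration idea of~\cite[Section~6]{G_Danielewski}.
\begin{itemize}
\item Fix a linear functional $w$ on the character lattice of ${\TT}$. It induces a $\ZZ$-filtration on $R$ whose associated graded algebra is again $R$.
\item The top component $\delta_w$ of $\delta$ is a ${\TT}$-homogeneous LND, by Lemma~\ref{LemmaLNDHomDecomposition} applied to a refinement of the order. By Step~1 it is a replica of $\delta_C$.
\item Hence the leading forms of elements of $\ker\delta$ lie in $\ker\delta_w=\ker\delta_C$. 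So $\operatorname{gr}_w\ker\delta\subseteq\ker\delta_C$.
\item Both sides have transcendence degree $\dim R-1$. Moreover $\ker\delta_C$ is a polynomial ring that is factorially closed in $R$ and algebraically closed in $R$.
\end{itemize}
From this I aim to deduce $\operatorname{gr}_w\ker\delta=\ker\delta_C$. Then each $T_{ij}$ with $i\ne i_0$, $j\ne c_i$ is the leading form of some element $g\in\ker\delta$.

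Finally, I vary $w$. I choose $w$ so that $T_{ij}$ is of strictly minimal $w$-degree among the nonconstant homogeneous elements of $\ker\delta_C$; this is possible because the degrees of the relevant variables are linearly independent. Then the element $g$ with leading form $T_{ij}$ should equal $T_{ij}$ plus constants, and factorial closedness of $\ker\delta$ (Lemma~\ref{BasicLem}$(ii)$) gives $T_{ij}\in\ker\delta$.

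\emph{Main obstacle.} Steps~1 and~2 are routine. The hardest part is Step~3, specifically the two deductions above: the equality $\operatorname{gr}_w\ker\delta=\ker\delta_C$, and the lifting of the leading form $T_{ij}$ to $T_{ij}$ itself. If the passage through $\operatorname{gr}_w$ proves awkward, I will instead argue directly with Lemma~\ref{LemmaLNDHomDecomposition}. The idea there is that for every order, both extreme components of $\delta$ are replicas of $\delta_C$ and so kill $T_{ij}$. I would then try to show, using the relations $T_i^{l_i}-T_{i_0}^{l_{i_0}}=\mathrm{const}$ together with $l_{ij}>1$, that $\delta(T_{ij})$ is divisible by unboundedly large powers of a variable unless it vanishes, which forces $\delta(T_{ij})=0$.
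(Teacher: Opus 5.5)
Your Steps~1 and~2 are correct and agree with the paper. Theorem~\ref{TheoremType1} forces every nonzero ${\TT}$-homogeneous LND to be a replica of the single $\delta_C$ with $c_{i_0}=1$, and $\ker\delta_C=\kk[T_{ij}\mid i\neq i_0,\ j\neq c_i]$.

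The gap is in Step~3, where neither of your routes works as stated.
\begin{enumerate}
\item[$(1)$] The inclusion $\operatorname{gr}_w\ker\delta\subseteq\ker\delta_C$, together with equal transcendence degree, does not give equality. Closedness properties of the \emph{larger} ring do not help (compare $\kk[x^2]\subset\kk[x]$), and you have no closedness property for $\operatorname{gr}_w\ker\delta$.
\item[$(2)$] Even if some $g\in\ker\delta$ had $w$-leading form $T_{ij}$, its other terms are arbitrary elements of $R$ of smaller $w$-degree, not elements of $\ker\delta_C$. So minimality of $T_{ij}$ inside $\ker\delta_C$ says nothing about them. For your choice of $w$, $T_{i_01}$ has weight $0$ and $T_{ic_i}$ has negative $w$-degree, so there are plenty of such terms, and $g=T_{ij}+\mathrm{const}$ does not follow.
\item[$(3)$] Your fallback remark that both extreme components kill $T_{ij}$ is not enough either, because the intermediate components need not kill it.
\end{enumerate}

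The missing idea is to use the \emph{degree} of $\delta_C$, not its kernel.
\begin{enumerate}
\item[$(1)$] Take the grading of Remark~\ref{RemarkAct} with $b_i=c_i$. Then $\deg T_{ij}=\alpha e_{ij}$ with $\alpha>0$, and $T_{ij}$ is the only generator whose weight has positive $e_{ij}$-coordinate.
\item[$(2)$] Moreover, $\deg(h\delta_C)=\deg h-\sum_k\deg T_{kc_k}$ has strictly positive $e_{ij}$-coordinate for every homogeneous $h\in\ker\delta_C$.
\item[$(3)$] Choose an order on the grading group in which $a\le b$ implies the same inequality between the $e_{ij}$-coordinates. Decompose an arbitrary LND as $\partial=\sum_k\partial_k$ using Lemma~\ref{LemmaLNDHomDecomposition}.
\item[$(4)$] The lowest component $\partial_1$ is a ${\TT}$-homogeneous LND, hence a replica of $\delta_C$. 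Since each $\deg\partial_k$ is at least $\deg\partial_1$, all of them have positive $e_{ij}$-coordinate.
\item[$(5)$] Therefore every monomial of a homogeneous representative of $\partial_k(T_{ij})$ contains $T_{ij}$. Hence $T_{ij}\mid\partial(T_{ij})$, and Lemma~\ref{BasicLem}$(iii)$ gives $\partial(T_{ij})=0$.
\end{enumerate}
A single factor of $T_{ij}$ is enough. You need neither unbounded divisibility nor any comparison of $\operatorname{gr}_w\ker\partial$ with $\ker\delta_C$.
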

\begin{proof}
      Theorem~\ref{TheoremType1} asserts that any ${\TT}$-homogeneous LND of $R$ is a replica of the $\delta_C$, $C=(c_1,\ldots,c_{i_0-1},1,c_{i_0+1},\ldots, c_r)$. Thus we have: $$     \ML (R)\subseteq\ker\delta_C=\kk[\,T_{ij}\,|\,i\neq i_0, \,j\neq c_i\,].
     $$
     
     To get the reverse inclusion, fix a variable $T_{ij}\in\ker\delta_C$. We use Remark~\ref{RemarkAct}, putting $b_i=c_i$. Then $\deg T_{ij}=\alpha e_{ij}$, $\alpha>0$, and $\deg\delta$ has nonzero coefficient at $e_{ij}$, for any ${\TT}$-homogeneous LND~$\delta$. Let $\mathbb Z^{n+d-r}$ be ordered in such a way that, for all $a,b\in\mathbb Z^{n+d-r}$, the inequality $a\le b$ implies the same inequality on the coefficients at $e_{ij}$. 
     
     By Lemma~\ref{LemmaLNDHomDecomposition}, any LND $\partial$ admits the decomposition
    $
    \partial=\sum_{k=1}^p\partial_k$; here $\deg\partial_1<\ldots<\deg\partial_p,
    $
    all the $\partial_k$ are ${\TT}$-homogeneous, and $\partial_1$ is locally nilpotent. 
    Hence, for any $k$, $\deg\partial_k$ has positive coefficient at $e_{ij}$, and the same holds for $\deg\partial_k(T_{ij})$. But $T_{ij}$ is the only variable whose weight has positive coefficient at $e_{ij}$. It follows that $T_{ij}\mid\partial_k(T_{ij})$ for all $k$, whence $T_{ij}\mid\partial(T_{ij})$. Now $\partial(T_{ij})=0$ by Lemma~\ref{BasicLem}. 
    
    Thus we obtain $\ker\delta_C\subseteq\ker\partial$ for any LND~$\partial$. It follows that $\ML(R)=\ker\delta_C$.
\end{proof}
\begin{corollary}\label{CorSemiRidigType1}
    A trinomial algebra $R$ of Type~1 is semirigid if and only if one of the following conditions holds:
    \begin{enumerate}
        \item[$(a)$] $R$ is rigid;
        \item[$(b)$] $d=1$ and the trinomial algebra $R_1$ obtained from $R$ by excluding $S_1$ from the system of generators, is rigid;
        \item[$(c)$] The assumption of Lemma~\emph{\ref{LemmaMLweek}} is fulfilled.      
    \end{enumerate}
\end{corollary}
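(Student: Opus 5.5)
The plan is to treat sufficiency and necessity separately. Throughout, Lemma~\ref{LemmaLNDHomDecomposition} lets us control arbitrary LNDs through their $\TT$-homogeneous extremal components, and Theorem~\ref{TheoremType1} describes those components explicitly.

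\emph{Sufficiency.}
\begin{itemize}
\item[(a)] A rigid algebra is trivially semirigid.
\item[(c)] By Lemma~\ref{LemmaMLweek}, $\ML(R)=\ker\delta_C$, which is a subalgebra of transcendence degree $\dim R-1$. Hence for every nonzero LND $\partial$ we have $\ker\delta_C\subseteq\ker\partial$. Both kernels are factorially closed of the same transcendence degree $\dim R-1$, and kernels of LNDs are algebraically closed in $R$. It follows that $\ker\partial=\ker\delta_C$, so $R$ is semirigid.
\item[(b)] Here $R=R_1[S_1]$ with $R_1$ rigid. I would use the standard fact that if $R_1$ is rigid, then every LND of $R_1[S]$ is of the form $h\,\partial/\partial S$ with $h\in R_1$. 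To prove it, take an LND $\partial$ and set $\TT$-degrees so that $S_1$ has its own coordinate $e_1$, with order defined by the $e_1$-coefficient first.
\begin{itemize}
\item By Theorem~\ref{TheoremType1}$(i)$, the extremal homogeneous components are either replicas of $\partial/\partial S_1$ (degree with $e_1$-coefficient $-1$), or $h\delta_C$. The latter is impossible because $R_1$ is rigid, so no admissible $C$ exists (Corollary~\ref{CorRigidiryType1}).
\item Alternatively, one argues directly: restricting to $R_1$ via the grading in $S_1$, the top $S_1$-degree part of $\partial$ would induce a nonzero LND on $R_1$ unless $\partial(R_1)=0$.
\end{itemize}
Then $\ker\partial\supseteq R_1$, so $\ker\partial=R_1$ by transcendence degree.
\end{itemize}

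\emph{Necessity.} Suppose none of (a), (b), (c) holds; we exhibit two inequivalent LNDs.
\begin{itemize}
\item If $d\ge2$, then $\partial/\partial S_1$ and $\partial/\partial S_2$ have different kernels.
\item If $d=1$ and $R_1$ is not rigid, then $R_1$ carries some $\delta_C$, which extends by $\delta_C(S_1)=0$. Its kernel contains $S_1$ while $\ker\partial/\partial S_1$ does not.
\item If $d=0$ and $R$ is not rigid, there is an admissible tuple $C$ as in Corollary~\ref{CorRigidiryType1}$(b)$. Since (c) fails, we must find a second admissible tuple $C'\neq C$ with $\ker\delta_{C'}\neq\ker\delta_C$. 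The cases are:
\begin{itemize}
\item[(1)] Some $i\neq i_0$ has two indices $c_i\neq c_i'$ with $l_{ic_i}=l_{ic'_i}=1$. Changing $c_i$ gives a different $\delta_{C'}$, and $T_{ic_i}$ lies in $\ker\delta_{C'}$ but not in $\ker\delta_C$.
\item[(2)] $l_{i_0j}=1$ for some $j$. Then we may move the exceptional index to another $i$.
\item[(3)] $T_{i_0}^{l_{i_0}}$ has at least two variables, all of exponent $>1$. Then choosing $c_{i_0}$ differently gives another admissible tuple.
\end{itemize}
\end{itemize}
Here $\ker\delta_C$ is computed from Theorem~\ref{TheoremType1}$(iii)$ together with factorial closedness. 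It contains exactly those variables $T_{ij}$ with $j\neq c_i$, so differing tuples give differing kernels.

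The main obstacle I expect is the case analysis in necessity: one must check that the negation of (c) really covers all configurations, including $n_{i_0}=1$ with $l_{i_01}=1$, i.e.\ a linear term. Such a configuration might force $R$ to be a polynomial ring, which is not semirigid when $\dim R\ge2$ but needs a separate remark. A second delicate point is the sufficiency argument in case (b), namely verifying that a rigid $R_1$ forces every LND of $R_1[S_1]$ to annihilate $R_1$.
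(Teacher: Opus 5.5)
Your sufficiency arguments are correct, and they take a different route from the paper's. For $(c)$, the paper uses Lemma~\ref{LemmaMLweek} only to get $\ker\partial\supseteq\kk[T_{ij}\mid i\neq i_0,\ j\neq c_i]$. Since $n_{i_0}=1$, this means $\partial$ satisfies the hypotheses of Lemma~\ref{PropositionGoodDerivationType1}, so $\partial=h\delta_C$. You instead combine $\ML(R)=\ker\delta_C$ with equality of transcendence degrees and algebraic closedness of LND kernels. Your route is softer and gives only equality of kernels, which is all semirigidity needs; the paper's gives the stronger fact that every LND is a replica of $\delta_C$.

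For $(b)$, the paper just cites \cite[Theorem~2.24]{Freu}. Your lexicographic argument via Theorem~\ref{TheoremType1}$(i)$ is a valid self-contained replacement once you add the missing line. Both extremal components are $h\,\partial/\partial S_1$ with $h\in R_1$, so their degrees have $e_1$-coefficient $-1$, and hence so do all intermediate components. The $e_1$-coefficient of a homogeneous element is its $S_1$-degree, which is $\ge0$, so every component kills $R_1$.

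Necessity is the paper's argument, which it compresses to ``a unique $\delta_C$ immediately implies $(c)$''. Your case split, however, is not right as written. In case (2), moving the exceptional index to some $i\neq i_0$ yields a new tuple only if $n_i\ge2$. Take $T_1^{l_1}=T_{11}T_{12}^2$, $T_2^{l_2}=T_{21}$ and $C=(1,1)$ read with $i_0=1$: none of (1)--(3) produces the second admissible tuple $C'=(2,1)$.

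The clean formulation runs as follows.
\begin{enumerate}
\item Distinct admissible tuples give inequivalent LNDs: if $c_i\neq c_i'$, then $T_{ic_i}\in\ker\delta_{C'}\setminus\ker\delta_C$, because $\delta_C(T_{ic_i})\neq0$.
\item Since $c_{i_0}$ is unconstrained, uniqueness of the admissible tuple forces $n_{i_0}=1$ whatever the exponents. It also forces exactly one $c_i$ with $l_{ic_i}=1$ for each $i\neq i_0$.
\item If moreover $l_{i_01}=1$, then every index can serve as the exceptional one, which forces $n_i=1$ for all $i$.
\end{enumerate}
So uniqueness gives either $(c)$ or the case where all monomials are linear.

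Your worry about this last configuration is justified, and it needs more than a remark. There $R\cong\kk[t]$, which is semirigid and non-rigid with $d=0$, yet $(c)$ fails. So the ``only if'' direction is false for that presentation, and the case must be excluded, e.g.\ by assuming $n_il_{ij}>1$. The paper's proof overlooks the same case.
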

\begin{proof}
      Clearly, if $R$ is semirigid, then $d\le1$; otherwise we would have non-equivalent LNDs ${\partial/\partial S_1}$ and ${\partial/\partial S_2}$.
    
    \emph{Case $d=1$}. We have $R=R_1\otimes\kk[S_1]$ where $R_1$ does not contain the free variables $S_k$.  Suppose that $R$ is semirigid; then all ${\TT}$-homogeneous LNDs are replicas of~${\partial/\partial S_1}$. Hence no tuple $C$ satisfies the assumption of Construction~\ref{ConstrDer1}, so Corollary~\ref{CorRigidiryType1} implies that $R_1$ is rigid. Thus we obtain~$(b)$.
    Conversely, if $R_1$ is rigid, semirigidity of $R$ follows from~\cite[Theorem~2.24]{Freu}.
    
    \emph{Case $d=0$}. We clearly get the assertion when $R$ is rigid. Now suppose that $R$ is semirigid but not rigid. Then there exists a unique $\delta_C$, which immediately implies~$(c)$. Conversely, suppose we have~$(c)$. By Lemma~\ref{LemmaMLweek}, any LND $\partial$ satisfies $\ker\partial\supseteq\kk[\,T_{ij}\,|\,i\neq i_0, \,j\neq c_i\,]$. Hence $\partial=h\delta_C$ by Lemma~\ref{PropositionGoodDerivationType1}, so $R$ is semirigid.
\end{proof}

\section{Main result on trinomial algebras of Type~2}
In this section, we state our main results on a trinomial algebra $R$ of Type~2. The proofs are given in subsequent sections and consist of two steps. In Section~\ref{SectionCases} we obtain formulae for ${\TT}$-homogeneous LNDs when $\Spec R$ is a surface. Then in Section~\ref{SectionProofs} we reduce Theorems~\ref{TheoremType2Classes} and~\ref{TheoremType2} to the surface case.

The following theorem describes the equivalence classes of ${\TT}$-homogeneous LNDs. Recall that two LNDs $\delta_1$ and $\delta_2$ are called \emph{equivalent} if $\ker\delta_1=\ker\delta_2$.

\begin{theorem}\label{TheoremType2Classes}
    Let $R$ be a trinomial algebra of Type~$2$. 
    \begin{enumerate}
        \item [$(i)$] Any ${\TT}$-homogeneous LND of $R$ either is a replica of ${\partial/\partial S_k}$, or corresponds to a tuple $C=(c_1,\ldots,c_r)$, $c_i\in\{1,\ldots,n_i\}$ such that $\delta(T_{ij})=0$ whenever $j\neq c_i$. In the latter case, $\delta(S_k)=0$ for all $k$ and one of the following conditions holds:
        \begin{enumerate}
            \item [$(a)$] There exist two integers $i_1,i_2$ such that $l_{ic_i}=1$ for all $i\neq i_1, i_2$.
            \item [$(b)$] There exist exactly three integers $i_1, i_2, i_3$ such that $l_{ic_i}=1$ for $i\neq i_1, i_2, i_3$. Furthermore, for $i=i_1,i_2$, all $l_{ij}$ are even and $l_{ic_i}=2$.
        \end{enumerate}
        \item [$(ii)$] Suppose that a tuple $C$ satisfies $(a)$ in $(i)$. If there exists $k\in\{i_1, i_2\}$ such that $l_{kc_k}\mid l_{ij}$ for  $i=i_1, i_2$ and all $j$, then the tuple $C$ corresponds to an infinite family of equivalence classes of ${\TT}$-homogeneous LNDs. Otherwise $C$ corresponds to exactly two classes of ${\TT}$-homogeneous LNDs.
        \item [$(iii)$] Suppose that $C$ satisfies \emph{$(b)$} in \emph{$(i)$}. If $l_{i_3 c_{i_3}}=2$ and all $l_{i_3 j}$ are even then $C$ corresponds to an infinite family of ${\TT}$-homogeneous LNDs. Otherwise $C$ corresponds to exactly two classes of ${\TT}$-homogeneous LNDs.
    \end{enumerate}
\end{theorem}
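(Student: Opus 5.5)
Part $(i)$ starts from Lemma~\ref{LemmaHLND}, exactly as in the proof of Theorem~\ref{TheoremType1}. If $\delta(T_{ij})=0$ for all $i,j$, then Lemma~\ref{LemmaHLND}$(ii)$ makes $\delta$ a replica of some $\partial/\partial S_k$. Otherwise $\delta(S_k)=0$ for all $k$, and Lemma~\ref{LemmaHLND}$(iii)$ gives the tuple $C$.

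To constrain the exponents $l_{ic_i}$, apply $\delta$ to the relations~\eqref{equations2}. Since the $a_i$ are pairwise independent, every three of the monomials $T_i^{l_i}$ satisfy a linear relation with all three coefficients nonzero. Applying $\delta$ to such a relation gives, for any three indices $i,j,k$,
\[
\alpha\,\delta(T_{ic_i})\,\partial_{c_i}T_i^{l_i}+\beta\,\delta(T_{jc_j})\,\partial_{c_j}T_j^{l_j}+\gamma\,\delta(T_{kc_k})\,\partial_{c_k}T_k^{l_k}=0 .
\]
Unlike in Type~1, a single term may vanish here, so the two-term divisibility trick of Lemma~\ref{PropositionGoodDerivationType1} does not apply directly.

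The plan is to reduce to surfaces. Consider the subalgebra generated by the $T_{ic_i}$ after specializing the kernel variables $T_{ij}$, $j\neq c_i$, to general nonzero constants. Alternatively, pass to the field $K=\Quot(\ker\delta)$ and view $R\otimes K$ as a trinomial algebra in the variables $T_{ic_i}$ alone, with exponents $l_{ic_i}$; it has dimension~$2$. The derivation $\delta$ then induces a homogeneous LND of a trinomial surface $x_0^{m_0},\dots,x_r^{m_r}$ with $m_i=l_{ic_i}$. For such surfaces, the Liendo/Altmann--Hausen description of Section~\ref{SectionCases} (or the classical classification of $\mathbb G_a$-actions on Pham--Brieskorn-type surfaces, which are $\kk^\times$-surfaces of hyperbolic type) shows that a nontrivial homogeneous LND exists exactly when at most two exponents exceed $1$, or when three do and two of them equal $2$. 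These are the platonic-type conditions $(a)$ and $(b)$.

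The extra requirement in $(b)$, that all $l_{ij}$ be even for $i=i_1,i_2$, should come from the ${\TT}$-homogeneity of the weights. The Liendo description needs the slopes of the polyhedral coefficients at the points $0$ and $\infty$, which are determined by $l_i/d_i$ through Lemma~\ref{LemmaRationalInvariants}, and I expect divisibility by $2$ of $d_{i}$ to be forced. This step is the main obstacle. It requires computing the proper polyhedral divisor of the trinomial surface over $\PP^1$, keeping track of the divisibility of the whole tuple $l_i$ rather than only $l_{ic_i}$, and checking that the reduction to $K$ preserves local nilpotency and homogeneity.

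For parts $(ii)$ and $(iii)$, equivalence classes of homogeneous LNDs correspond in Liendo's description to pairs consisting of a choice of the two special points $z_0,z_\infty\in\PP^1$ where the kernel is "vertical" and a choice of a ray/vertex of the degree. An infinite family arises exactly when the kernel of $\delta$ can contain infinitely many different rational invariants, namely generators $T_{k}^{l_k/e}/T_{k'}^{\cdots}$ taken from Lemma~\ref{LemmaRationalInvariants}. The divisibility conditions $l_{kc_k}\mid l_{ij}$ (resp.\ the parity conditions on $l_{i_3 j}$) are exactly what allows the fiber over the distinguished point to be reduced, so that the root degree can move along an unbounded edge. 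Otherwise only the two endpoints give LNDs, which yields exactly two classes. I would verify this by writing the kernels explicitly: $\ker\delta$ contains all $T_{ij}$ with $j\neq c_i$ together with one additional invariant, and counting the possibilities for that invariant.
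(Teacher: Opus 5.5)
Your skeleton matches the paper's proof: Lemma~\ref{LemmaHLND} for the dichotomy, then passage to the trinomial surface $Z$ over the algebraic closure $\KK$ of $\kk(S_k,\,T_{ij}\mid j\neq c_i)$, and the rigidity criterion for trinomial surfaces, which gives the restrictions on the $l_{ic_i}$ in (a) and (b). Your second variant of the reduction is wrong, though. The field $K=\Quot(\ker\delta)$ is too big: $\ker\delta$ has transcendence degree $\dim R-1$ and contains one invariant besides the $S_k$ and the $T_{ij}$, $j\neq c_i$. Localizing $R$ at $\ker\delta\setminus\{0\}$ therefore gives a polynomial ring in one variable over $K$, not a surface. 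The specialization variant does give the constraints on the $l_{ic_i}$. However, it discards the exponents $l_{ij}$, $j\neq c_i$, and these are exactly what the remaining assertions are about.

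The genuine gap is everything that uses ${\TT}$-homogeneity beyond homogeneity for the one-dimensional torus of $Z$. This covers the evenness of all $l_{ij}$ in (i)(b), which you leave as an expectation, and the counts in (ii) and (iii). For the counts, your polyhedral-divisor sketch is not an argument. In particular, it never addresses that the surface families are parametrized by $\lambda\in\KK$: item $(iii)$ of Proposition~\ref{PropHom-A^2} and $\delta_\lambda$ in Proposition~\ref{PropHomCase222}. You must decide which parameters actually come from $R$.

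The missing idea is short and needs no polyhedral divisor of $\Spec R$. All $T_i^{l_i}$ have the same ${\TT}$-weight, so for homogeneous $\delta$ the ratio $\delta(T_{i_1}^{l_{i_1}})/\delta(T_{i_2}^{l_{i_2}})$ lies in $(\Quot R)^{\TT}$. Compute this ratio from the explicit surface LNDs:
\begin{itemize}
\item $\lambda^{-1}\sqrt[\alpha]{T_0^{l_0}/T_1^{l_1}}\cdot T_1^{l_1}/T_0^{l_0}$ for the family in Proposition~\ref{PropHom-A^2}$(iii)$;
\item $\ii x/y$ for $\delta_0$ of Proposition~\ref{PropHomCaseGamma};
\item $(2\lambda+\ii(1+\lambda^2)z/y)/(-2\lambda+(1-\lambda^2)z/x)$ for $\delta_\lambda$.
\end{itemize}
Comparing with Lemma~\ref{LemmaRationalInvariants} forces, respectively, $\alpha\mid d_{i_1},d_{i_2}$ (the divisibility condition in (ii)), $d_{i_1},d_{i_2}$ even (the evenness in (i)(b)), and $d_0=d_1=d_2=2$ for generic $\lambda$ (the condition in (iii)). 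In every case it also forces $\lambda\in\kk$.

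You still have to lift the surface LND back to $R$. Pairwise coprimeness of the variables (Lemma~\ref{LemmaCoprime}) together with Lemma~\ref{LemmaQuasiCoprime} shows that $\delta$ is a replica of a derivation from Construction~\ref{ConstrDer2}. In the multi-equation case it carries the extra factor $\prod_{i>2}\partial T_i^{l_i}/\partial T_{ic_i}$, and the $T_{ic_i}$, $i>2$, are determined by the relations. The kernels of these derivations, e.g.\ $\kk[T_{ij}\ (j\neq c_i),\ \lambda T_0^{l_0/\alpha}-T_1^{l_1/\alpha}]$, are pairwise distinct for distinct $\lambda\in\kk^\times$. Only then do you get infinitely many classes when the condition holds, and exactly two otherwise: $\partial/\partial x$ and $\partial/\partial y$ in case (a), and $\delta_0$ and $\delta_\infty$ in case (b).
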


A straightforward corollary of Theorem~\ref{TheoremType2Classes} 
and~\cite[Theorem~2.24]{Freu} is the following:
\begin{corollary}\label{CorSemiRigidType2}
    A trinomial algebra $R$ of Type~$2$ is semi-rigid if and only if either $R$ is rigid, or there exists a unique variable $S_1$ and the algebra~$R_1$ is rigid, where $R_1$ is obtained from $R$ by removing $S_1$ from the set of generators.
\end{corollary}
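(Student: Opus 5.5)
We have to prove Corollary~\ref{CorSemiRigidType2}: $R$ is semirigid iff either $R$ is rigid, or $d=1$ and $R_1$ is rigid. The plan is to combine Lemma~\ref{LemmaLNDHomDecomposition}, Theorem~\ref{TheoremType2Classes} and \cite[Theorem~2.24]{Freu}.

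\emph{Sufficiency.} A rigid $R$ is trivially semirigid. If $d=1$ and $R_1$ is rigid, then $R=R_1\otimes\kk[S_1]$, and \cite[Theorem~2.24]{Freu} shows that every LND of $R$ is equivalent to $\partial/\partial S_1$. Hence $R$ is semirigid.

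\emph{Necessity.} Assume $R$ is semirigid but not rigid.

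\begin{enumerate}
\item[$(1)$] First, $d\le 1$. Otherwise $\partial/\partial S_1$ and $\partial/\partial S_2$ are LNDs, and their kernels differ because $S_2\in\ker\partial/\partial S_1$ but $S_2\notin\ker\partial/\partial S_2$.
\item[$(2)$] Next, no tuple $C$ as in Theorem~\ref{TheoremType2Classes}$(i)$ can carry a ${\TT}$-homogeneous LND. By parts $(ii)$ and $(iii)$ of that theorem, such a tuple yields at least two (possibly infinitely many) equivalence classes of ${\TT}$-homogeneous LNDs, which contradicts semirigidity.
\item[$(3)$] By Theorem~\ref{TheoremType2Classes}$(i)$, every ${\TT}$-homogeneous LND of $R$ is then a replica of some $\partial/\partial S_k$. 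Since $R$ is not rigid, Lemma~\ref{LemmaLNDHomDecomposition} gives a nonzero ${\TT}$-homogeneous LND. Therefore $d\ge1$, and combined with $(1)$, $d=1$.
\item[$(4)$] It remains to show $R_1$ is rigid. Suppose not. Then by Lemma~\ref{LemmaLNDHomDecomposition} applied to $R_1$ with its ${\TT}$-grading, $R_1$ has a nonzero ${\TT}$-homogeneous LND. Since $R_1$ has no free variables, Theorem~\ref{TheoremType2Classes}$(i)$ says this LND is of tuple type for some $C$. Extending it by $S_1\mapsto0$ gives a ${\TT}$-homogeneous LND of $R$ of tuple type for the same $C$. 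By $(2)$ this is impossible. So $R_1$ is rigid.
\end{enumerate}

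One compatibility check is needed in step $(4)$: the torus of $R$ should be the torus of $R_1$ times the one-dimensional torus scaling $S_1$. This follows from Remark~\ref{RemarkAct}. It guarantees that a homogeneous LND of $R_1$ remains homogeneous on $R$, and that the tuple conditions of Theorem~\ref{TheoremType2Classes} depend only on the exponents $l_{ij}$.

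The main obstacle is already absorbed into Theorem~\ref{TheoremType2Classes}: the count of at least two classes per tuple in parts $(ii)$ and $(iii)$. Given that count, the argument above is a direct deduction.
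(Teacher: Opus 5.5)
Your proposal is correct and is essentially the argument the paper intends: the paper calls the statement a straightforward consequence of Theorem~\ref{TheoremType2Classes} and \cite[Theorem~2.24]{Freu}, and your steps are exactly the expansion modeled on the proof of Corollary~\ref{CorSemiRidigType1}, namely $d\le 1$, excluding tuple-type LNDs via parts $(ii)$--$(iii)$, and then $d=1$ together with rigidity of $R_1$. For the rigidity of $R_1$, you extend a ${\TT}$-homogeneous LND of $R_1$ by $S_1\mapsto 0$, using the splitting of the torus from Remark~\ref{RemarkAct}; this cleanly replaces the appeal to an explicit rigidity criterion used in the Type~1 proof.
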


The following construction provides explicit formulae for the ${\TT}$-homogeneous LNDs when $R$ is given by a single equation. It is convenient to assume that all coefficients in the equation equal~$1$; this can always be achieved by a linear change of variables. A square root of $-1$ is denoted by $\ii$.
\begin{construction}\label{ConstrDer2}
    Consider the trinomial variety given by $T_0^{l_0}+T_1^{l_1} + T_{2}^{l_2}=0$. Let $C=(c_0, c_1, c_2)$ with $c_i\in\{1,\ldots,n_i\}$. 
    
   Recall that $T_i^{l_i/\alpha}$ denotes $\prod_{j=1}^{n_i} T_{ij}^{l_{ij}/\alpha}$ whenever $\alpha\mid l_{ij}$ for all $j$.
    \begin{enumerate}
        \item [$(a)$] Suppose $l_{2c_2}=1$. Let 
                $$ \delta(T_{0c_0}) = 
                 \frac {\partial T_2^{l_2}}{\partial T_{2c_2}},\qquad \qquad
                 \delta(T_{2c_2})=-\frac {\partial T_0^{l_0}}{\partial T_{0c_0}},$$
                 $$\delta(T_{ij})=0, \quad\mbox{ other $i,j$},\qquad\quad 
                 \delta(S_k) = 0,\quad \text{all } k.$$
        \item [$(b)$] Suppose $l_{2c_2}=1$ and $l_{0c_0}\mid l_{ij}$ for $i=0,1$, $j=1,\ldots,n_i$. Take any $\lambda\in\Bbbk^\times$ and let
        $$ \delta(T_{0c_0}) = 
                 \frac {\partial T_1^{l_1/l_{0c_0}}}{\partial T_{1c_1}}{\partial T_2^{l_2}\over\partial T_{2c_2}},\qquad \qquad
                 \delta(T_{1c_1})=\lambda\frac {\partial T_0^{l_0/l_{0c_0}}}{\partial T_{0c_0}}{\partial T_2^{l_2}\over\partial T_{2c_2}},$$
                 $$
                 \delta(T_{2c_2})= -\left( {\partial T_0^{l_0}\over\partial T_{01}}{\partial T_1^{l_1/l_{0c_0}}\over\partial T_{11}} +\lambda {\partial T_0^{l_0/l_{0c_0}}\over\partial T_{01}} {\partial T_1^{l_1}\over\partial T_{11}}\right),
                 $$
        $$\delta(T_{ij})=0,\quad\mbox{ other $i,j,$}\qquad\quad\delta(S_k)=0,\quad\mbox { all $k$.}$$
        \item[$(c)$] Suppose $l_{0c_0}=l_{1c_1}=2$, $l_{2c_2}\ge2$, and all $l_{ij}$ are even for $i=0,1$. Take $\mu\in\{\pm\ii\}$ and let
        $$
        \delta(T_{0c_0})=\mu {\partial T_1^{l_1/2}\over\partial T_{1c_1}}{\partial T_2^{l_2}\over\partial T_{2c_2}},\qquad\qquad         
        \delta(T_{1c_1})={\partial T_0^{l_0/2}\over\partial T_{0c_0}}{\partial T_2^{l_2}\over\partial T_{2c_2}},
        $$              
        $$
        \delta(T_{2c_2})=-\left(\mu{\partial T_0^{l_0}\over\partial T_{0c_0}}{\partial T_1^{l_1/2}\over\partial T_{1c_1}} + {\partial T_0^{l_0/2}\over\partial T_{0c_0}}{\partial T_1^{l_1}\over\partial T_{1c_1}} \right).
        $$
        $$\delta(T_{ij})=0, \quad\mbox{ other $i,j$},\qquad\quad 
                 \delta(S_k) = 0,\quad \text{ all } k.$$
        \item[$(d)$] Suppose $l_{ic_i}=2$ for any $i$ and all $l_{ij}$ are even. Take $\lambda\in\Bbbk^\times$ and let
        \begin{equation*}
            \begin{split}                 \delta(T_{0c_0})&= \prod_{i\neq 0}{\partial T_i^{l_i/2}\over \partial T_{ic_i}}  \left(2\lambda\, T_1^{l_1/2} + (1+\lambda^2)\ii\, T_2^{l_2/2}\right) ,\\ 
            \delta(T_{1c_1})&= \prod_{i\neq 1}{\partial T_i^{l_i/2}\over \partial T_{ic_i}} \left(-2\lambda\, T_0^{l_0/2} + (1-\lambda^2) \,T_2^{l_2/2} \right) ,\\
            \delta(T_{2c_2})&= \prod_{i\neq 2}{\partial T_i^{l_i/2}\over \partial T_{ic_i}} \left(-(1+\lambda^2)\ii\, T_0^{l_0/2} - (1-\lambda^2) \,T_1^{l_1/2} \right).
            \end{split}
        \end{equation*}
       $$\delta(T_{ij})=0, \quad\mbox{ other $i,j$},\qquad\quad 
                 \delta(S_k) = 0,\quad \text{ all } k.$$
    \end{enumerate}
\end{construction}
The assignments just described give rise to derivations of the polynomial algebra by the Leibniz rule. It will follow from the proof of Theorem~\ref{TheoremType2} that these formulae indeed give ${\TT}$-homogeneous LNDs of the trinomial algebra.

\begin{theorem}\label{TheoremType2}
    Let $R$ be the trinomial algebra of Type~$2$ given by a single equation $T_0^{l_0}+T_1^{l_1}+ T_{2}^{l_2}=0$. Any ${\TT}$-homogeneous LND of $R$ is a replica either of ${\partial/\partial S_k}$ or of one of the derivations described in Construction~\ref{ConstrDer2} (after a permutation of the monomials~$T_{i}^{l_i}$, if necessary).

    Moreover, in the notation of Construction~\ref{ConstrDer2}, the algebra $\ker\delta$ is generated by $T_{ij}$, $i=0,1,2$, $j\neq c_i$, together with the following elements:
    \begin{align*}
        T_{1c_1}\qquad &\mbox{ in Case $(a)$},\\
        \lambda\,{ T_0^{l_0/l_{0c_0}}}-{T_{1}^{l_1/l_{0c_0}}}\qquad &\mbox{ in Case $(b)$},\\
        \mu\,{ T_0^{l_0/2}}+{T_{1}^{l_1/2}}\qquad  &\mbox{ in Case $(c)$},\\
        (1-\lambda^2)\, T_{0}^{l_0/2} -(1+\lambda^2)\ii\,{ T_1^{l_1/2}} +2\lambda\, {T_{2}^{l_2/2}}\qquad &\mbox{ in Case $(d)$}.
    \end{align*}
\end{theorem}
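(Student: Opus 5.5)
Following the strategy announced in the introduction, the plan is to reduce to a trinomial surface and then apply Liendo's combinatorial description. First, by Lemma~\ref{LemmaHLND}, a nonzero ${\TT}$-homogeneous LND $\delta$ either annihilates all $T_{ij}$, and is then a replica of some $\partial/\partial S_k$, or annihilates all $S_k$ and all $T_{ij}$ with $j\neq c_i$ for some tuple $C=(c_0,c_1,c_2)$.

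In the second case I would localize at the kernel variables. Set $K=\kk(T_{ij}\mid j\neq c_i)$; each such $T_{ij}$ lies in $\ker\delta$, so $\delta$ extends to an LND of $R\otimes K$. The latter is generated over $K$ by $T_{0c_0},T_{1c_1},T_{2c_2}$ and the $S_k$, and the variables $S_k$ split off. What remains is the surface $\kappa_0 x^{l_{0c_0}}+\kappa_1 y^{l_{1c_1}}+\kappa_2 z^{l_{2c_2}}=0$ over $K$ with unit coefficients $\kappa_i$, and $\delta$ is homogeneous for the one-dimensional torus, namely the image of ${\TT}$ acting on $x,y,z$. Since $K$ is not algebraically closed, I would pass to $\bar K$, or work with Liendo's description over an arbitrary characteristic-zero field of definition, taking care of the descent.

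Next comes the surface classification, which is the key step. For a ${\mathbb G}_m$-surface $x^{a}+y^{b}+z^{c}=0$, Liendo's description splits homogeneous LNDs into fiber type and horizontal type. Fiber type means the kernel contains $T_i^{l_i}/T_j^{l_j}$-type invariants, i.e.\ contains one of $x,y,z$; this gives case~$(a)$ and forces the exponent of the moving partner to be~$1$. Horizontal type requires the polyhedral divisor over $\PP^1$ to have at most two non-integral (or special) fibres, with the extra $(2,2,c)$ platonic-type exception. This yields cases $(b)$, $(c)$ and~$(d)$, together with the kernel generators: a binomial $\lambda T_0^{l_0/l_{0c_0}}-T_1^{l_1/l_{0c_0}}$, resp.\ its square-root analogues.

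Rather than just quoting Liendo, I would make the kernel explicit. A homogeneous LND on a surface has kernel generated by a single homogeneous element $g$ of $\kk[x,y,z]$ modulo the relation. The ${\TT}$-invariant rational functions are given by Lemma~\ref{LemmaRationalInvariants}, so $g$ must be a linear form in $x^{a/e},y^{b/e},z^{c/e}$ whose level sets are affine lines. This is where the divisibility conditions $l_{0c_0}\mid l_{ij}$, and the evenness conditions needed to take $T_i^{l_i/2}$ and to parametrize the conic $u^2+v^2+w^2=0$, come from.

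Once $g$ is known, the derivation is determined up to a kernel factor as the Jacobian derivation $\delta=\mathrm{Jac}(F,g,\cdot)$, where $F=T_0^{l_0}+T_1^{l_1}+T_2^{l_2}$, restricted to the moving variables. I would compute these Jacobians in each case and check that they coincide with the formulae of Construction~\ref{ConstrDer2} (the parametrization of the conic by $\lambda$ produces case~$(d)$). I would also verify directly that they are LNDs: each kernel generator together with the $T_{ij}$, $j\neq c_i$, is killed, and transcendence degree considerations show that $\delta$ is then triangular over its kernel.

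Finally I would show that $\delta$ is a replica of this $\delta_C$ over $R$, not only over $R\otimes K$. The two derivations are proportional over $\Quot R$, so $\delta=h\delta_C$ with $h$ rational. Coprimality of the variables (Lemma~\ref{LemmaCoprime}) shows $h\in R$, and Lemma~\ref{BasicLem}$(i)$ gives $h\in\ker\delta_C$. For the statement about $\ker\delta$, its generators are obtained by intersecting $R$ with $K[g]$ and again using factorial closedness.

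I expect the main obstacle to be the surface step: extracting precisely cases $(b)$–$(d)$ from Liendo's polyhedral criterion, and in particular showing that three exponents $\ge2$ force $(2,2,c)$ with the evenness conditions. Failing a clean combinatorial argument, I would argue directly that $g$ must vanish to first order along a rational curve, so that the curve $\{g=0\}$ on the weighted projective conic is rational and isomorphic to a line.
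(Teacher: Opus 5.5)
Your overall route matches the paper's: reduce with Lemma~\ref{LemmaHLND}, pass to a trinomial surface over an algebraically closed field containing the kernel variables, classify there, and lift back. However, the two steps that carry the content are not supplied, and parts of the sketch are wrong.

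\textbf{The surface step.} The torus acting on $Z$ (the one-dimensional subtorus of $\TT$ moving only the $T_{ic_i}$) has positive weights on $x,y,z$. So there are no vertical (fiber) type homogeneous LNDs at all: such a kernel $\KK[g]$, with $g$ homogeneous, would satisfy $\KK(g)=\KK(Z)^{\TT}$, forcing $g$ to be constant. Case $(a)$ is therefore horizontal ($\partial/\partial x$ on $\{x^\alpha+y^\beta+z=0\}\cong\A^2$ does not kill $x^a/y^b$), and your dichotomy misplaces it. More seriously, the claim that the kernel generator is a linear form in $x^{a/e},y^{b/e},z^{c/e}$ with affine-line level sets states the answer rather than proving it. You neither rule out other homogeneous generators nor explain why $x^2+y^2+z^2=0$ carries a one-parameter family of kernels while $\gamma>2$ gives only two. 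Also, the conditions $l_{0c_0}\mid l_{ij}$, the evenness of all $l_{ij}$, and $\lambda\in\kk$ (rather than $\lambda\in\KK$) are invisible on $Z$, where only the exponents $l_{ic_i}$ appear; they must be extracted from an element of $\Quot R$.

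The paper closes this as follows. The rigidity criterion of~\cite{EGSh} reduces to the surfaces (A), (B), (C), which are classified separately:
(A) in Section~\ref{SectionCasePlane}, via the description of LNDs of $\kk[x,y]$;
(B) in Proposition~\ref{PropHomCaseGamma}, via Kotenkova's result that every $\TT$-homogeneous LND of $uv=z^\gamma$, $\gamma>2$, is $(\kk^\times)^2$-homogeneous, followed by Demazure roots;
(C) in Proposition~\ref{PropHomCase222}, where that fails and the family $\delta_\lambda$ comes from colorings of $\DD=\QQ_{\ge2}\{\infty\}$ on $\PP^1$ with moving point $p_\infty$.
Lemma~\ref{LemmaRationalInvariants} is then applied to the $\TT$-invariant ratio $\delta(T_0^{l_0})/\delta(T_1^{l_1})$.

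\textbf{The lifting step.} From $\delta=h\delta_C$, Lemma~\ref{LemmaCoprime} applied to the monomial values $\delta(T_{0c_0}),\delta(T_{1c_1})$ only gives $hP\in R$. Here $P=\partial T_2^{l_2}/\partial T_{2c_2}$ in cases $(b)$, $(c)$, with an analogous monomial in case $(d)$. Removing $P$ means deducing $P\mid hP$ from $(hP)\,Q=-\delta(T_{2c_2})\,P$, where $Q$ is the binomial factor of $\delta_C(T_{2c_2})$. That requires $P$ and $Q$ to be coprime in $R$. But in general $Q$ is only $\TT$-homogeneous, not $\HH$-homogeneous: for $l_{0c_0}>1$ its two terms differ by a nontrivial torsion character. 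So the factorially graded argument behind Lemma~\ref{LemmaCoprime} does not apply, and $R$ is not factorial; the paper explicitly notes that this coprimality fails in general. That is why it proves Lemma~\ref{LemmaQuasiCoprime}. Using a representative of $\delta(T_{2c_2})$ not involving $T_{2c_2}$ (read off from the explicit surface formulae), and a representative of $h$ with no monomial divisible by $T_2^{l_2}$, the identity already holds in $\kk[T_{ij},S_k]$, where $P$ and $Q$ are coprime.

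Finally, your check that the derivations of Construction~\ref{ConstrDer2} are locally nilpotent ``by transcendence degree'' is invalid: $x\,\partial/\partial x$ on $\kk[x,y]$ kills $\kk[y]$ but is not locally nilpotent. Local nilpotency should instead come from their being lifts of the surface LNDs.
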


\begin{remark}
Elementary (i.e., $\HH$-homogeneous) LNDs correspond to Case $(a)$ and Case~$(b)$, the latter with $l_{0c_0}=1$; compare~\cite[Construction 2]{GZ}.
\end{remark}

\section{Non-rigid trinomial surfaces of Type 2}\label{SectionCases}
According to the rigidity criterion~\cite[Theorem~4]{EGSh}, a non-rigid trinomial algebra of Type $2$ with three generators $x, y, z$ is given by one of the following relations:
    \begin{enumerate}
        \item [$(a)$] $x^\alpha+y^\beta+z=0$ where $\alpha,\beta\in\ZZ_{>0}$;
        \item [$(b)$]$x^2+y^2+z^\gamma=0$ where  $\gamma\in\ZZ_{\ge2}$.
    \end{enumerate}
   In fact, \emph{any} non-rigid trinomial surface of Type 2 can be given by a single equation of the form $(a)$ or $(b)$ after eliminating linear terms. This section provides an explicit description of the ${\TT}$-homogeneous LNDs on such a surface.
\subsection{Case $x^\alpha+y^\beta+z=0$}\label{SectionCasePlane}
Let $X=\{x^\alpha+y^\beta+z=0\}$ where $\alpha,\beta\in\ZZ_{>0}$. Suppose $\alpha=a\gcd(\alpha,\beta),\,\beta=b\gcd(\alpha,\beta)$. Then~$X$ is equivariantly isomorphic to the affine plane with coordinate functions $x,y$ acted on by the one-dimensional torus ${\TT}$ via $t\colon\, x\longmapsto t^b x,\,\,y\longmapsto t^a y$.

\begin{definition}
    A polynomial $v\in\Bbbk[x_1,\ldots,x_n]$ is called a \emph{variable} if there exist $n-1$ polynomials $u_1,\ldots,u_{n-1}\in \Bbbk[x_1,\ldots,x_n]$ that, along with $v$, generate $\Bbbk[x_1,\ldots,x_n]$.
\end{definition}
\begin{lemma}\emph{\cite[Corollary~4.6]{Freu}}\label{A^2-Lemma}
    A derivation $\delta$ of the polynomial algebra $\Bbbk[x,y]$ is an LND if and only if there exists a variable $v$ and a polynomial $h\in\Bbbk[v]$ such that $$\delta\,\colon x\longmapsto {\partial h\over\partial y},\qquad y\longmapsto-{\partial h\over\partial x}.$$
\end{lemma}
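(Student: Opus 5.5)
The proof has two directions, and in both I would rewrite the derivation in the statement as a Jacobian derivation. For $f\in\Bbbk[x,y]$ put
$$J(f,g)=\frac{\partial f}{\partial x}\frac{\partial g}{\partial y}-\frac{\partial f}{\partial y}\frac{\partial g}{\partial x}.$$
The assignment $x\mapsto \partial h/\partial y$, $y\mapsto -\partial h/\partial x$ extends by the Leibniz rule to the derivation $\delta_h=J(\,\cdot\,,h)$. So the statement says exactly this: the LNDs of $\Bbbk[x,y]$ are the derivations $J(\,\cdot\,,h)$ with $h\in\Bbbk[v]$ for some variable $v$.

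\emph{Sufficiency.} Let $v$ be a variable, and choose $u$ with $\Bbbk[u,v]=\Bbbk[x,y]$. The Jacobian determinant of the automorphism $(x,y)\mapsto(u,v)$ is a nonzero constant $c\in\Bbbk^\times$. By the chain rule,
$$J(f,g)=c\left(\frac{\partial f}{\partial u}\frac{\partial g}{\partial v}-\frac{\partial f}{\partial v}\frac{\partial g}{\partial u}\right).$$
For $h\in\Bbbk[v]$ this gives $\delta_h=c\,h'(v)\,\partial/\partial u$. Now $\partial/\partial u$ is an LND of $\Bbbk[u,v]$ and $c\,h'(v)\in\ker\partial/\partial u$. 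By Lemma~\ref{BasicLem}$(i)$, $\delta_h$ is therefore locally nilpotent.

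\emph{Necessity.} Let $\delta\neq0$ be an LND of $\Bbbk[x,y]$; for $\delta=0$ take $h$ constant. The key input is Rentschler's theorem: there exist $u,v$ with $\Bbbk[u,v]=\Bbbk[x,y]$ such that $\ker\delta=\Bbbk[v]$ and $\delta=p(v)\,\partial/\partial u$ for some $p\in\Bbbk[v]$.

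Granting this, I would compute the Jacobian constant $c$ of $(u,v)$. Since $\operatorname{char}\Bbbk=0$, we can integrate and choose $h\in\Bbbk[v]$ with $c\,h'(v)=p(v)$. By the computation in the sufficiency part, $\delta_h=p(v)\,\partial/\partial u=\delta$. In particular $\delta(x)=J(x,h)=\partial h/\partial y$ and $\delta(y)=J(y,h)=-\partial h/\partial x$, as required.

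The main obstacle is Rentschler's theorem itself. To prove it, I would proceed in three steps.
\begin{enumerate}
\item Show that $\ker\delta$ is a polynomial ring $\Bbbk[v]$ in one variable. It has transcendence degree $1$, it is factorially closed (Lemma~\ref{BasicLem}$(ii)$), and its units are constants.
\item Produce a local slice $r$ with $\delta(r)=q(v)\neq0$. Then show that $v$ is a variable, either via the Jung--van der Kulk structure of $\operatorname{Aut}\Bbbk[x,y]$ by induction on degree, or via the Abhyankar--Moh theorem applied to the curve $\{v=0\}$, whose fibres are lines.
\item Once $v$ is a variable with complement $u$, the conditions $\delta(v)=0$ and $\delta^2(u)=0$ (a degree argument in $u$ over $\Bbbk[v]$) force $\delta(u)\in\Bbbk[v]$. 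Hence $\delta=\delta(u)\,\partial/\partial u$.
\end{enumerate}
Since this is a classical result, in the paper I would simply cite \cite[Corollary~4.6]{Freu} and the theorem of Rentschler it rests on, rather than reprove it.
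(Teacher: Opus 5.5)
Your proposal is correct and is essentially the standard derivation behind the cited \cite[Corollary~4.6]{Freu}; the paper itself gives no proof beyond this citation. You identify $\delta$ with the Jacobian derivation $J(\cdot,h)$, obtain sufficiency from the constant Jacobian of a coordinate change together with Lemma~\ref{BasicLem}$(i)$, and obtain necessity from Rentschler's theorem $\delta=p(v)\,\partial/\partial u$ by integrating $p$.

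Two small points in your optional sketch of Rentschler's theorem. First, Abhyankar--Moh should be applied to a fibre $\{v=c\}$ with $q(c)\neq0$, where $q(v)$ becomes a unit and the fibre is an affine line, rather than to $\{v=0\}$. Second, step~1 still needs a genuine additional input, e.g.\ Zaks' theorem, to pass from factorial closedness, transcendence degree one and trivial units to $\ker\delta=\Bbbk[v]$.
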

\begin{proposition}\label{PropHom-A^2}
    Consider $\mathbb A^2$ with coordinate functions $x, y$ acted on by the one-dimensional torus ${\TT}$ via $t\colon\, x\longmapsto t^b x,\,\,y\longmapsto t^a y$ where $a, b\in\mathbb Z_{>0}$. Let $\delta$ be a ${\TT}$-homogeneous LND of $\Bbbk[x,y]$. Then one of the following holds:
    \begin{enumerate}
        \item [$(i)$] $\delta$ is a replica of ${\partial/\partial x}$;
        \item [$(ii)$] $\delta$ is a replica of ${\partial/\partial y}$; 
        \item[$(iii)$] $a=1$ and $\delta$ is a replica of $$x\lmt by^{b-1},\qquad  y\lmt\lambda,\qquad \lambda\in\Bbbk^{\times}$$ where  $\ker\delta=\Bbbk[\lambda x-y^{b}]$;
        \item[$(iv)$] $b=1$ and $\delta$ is a replica of  $$x\lmt \lambda,\qquad y\lmt ax^{a-1}, \qquad \lambda\in\Bbbk^{\times}$$ where $\ker\delta=\Bbbk[ x^{a}-\lambda y]$.
    \end{enumerate}
\end{proposition}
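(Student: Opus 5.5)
The approach is to combine Lemma~\ref{A^2-Lemma} with the observation that the kernel of a homogeneous LND is a graded subalgebra. This pins the kernel generator down to a weighted-homogeneous variable, and there are very few of those.

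\emph{Step 1: the derivation is a replica of a Jacobian derivation.} Let $\delta\neq0$ be a ${\TT}$-homogeneous LND of $\Bbbk[x,y]$. By Lemma~\ref{A^2-Lemma} there are a variable $v$ and $h\in\Bbbk[v]$ with $\delta(x)=\partial h/\partial y$ and $\delta(y)=-\partial h/\partial x$. By the chain rule,
$$\delta=h'(v)\,D_v,\qquad D_v\colon x\lmt \partial v/\partial y,\quad y\lmt -\partial v/\partial x,$$
where $D_v(f)$ is the Jacobian $\det\partial(f,v)/\partial(x,y)$. We have $v\in\ker\delta$, and $h'(v)\in\ker D_v$. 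Since $v$ is a variable, $D_v$ is an LND with $\ker D_v=\Bbbk[v]$; it is just $\partial/\partial u$ in coordinates $(u,v)$, up to a nonzero constant. It follows that $\ker\delta=\Bbbk[v]$ and that $\delta$ is a replica of $D_v$. Replacing $v$ by $v-v(0)$, we may assume $v$ has no constant term.

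\emph{Step 2: $v$ can be chosen homogeneous.} Because $\delta$ is homogeneous, $\ker\delta$ is a graded subalgebra, so every homogeneous component $v_w$ of $v$ lies in $\Bbbk[v]$. Since all weights $b,a$ are positive and $v(0)=0$, each nonzero component $v_w$ is nonconstant. Hence $v_w=p(v)$ with $\deg p\ge1$, and therefore $\deg_{x,y}v_w\ge\deg_{x,y}v$. This forces $v$ to have only one homogeneous component (any $v_w$ of strictly smaller total degree than $v$ would violate the inequality), so $v$ is ${\TT}$-homogeneous. I expect this to be the only mildly delicate step, because it requires care about the grading on $\Bbbk[v]$.

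\emph{Step 3: classify homogeneous variables.} A variable has a nonzero linear part, since the Jacobian of an automorphism is a nonzero constant. Hence $v$ contains $x$ or $y$ linearly.
\begin{enumerate}
\item[$\bullet$] Suppose $v$ contains $x$, so $v$ has weight $b$. Every other monomial $x^iy^j$ of $v$ satisfies $ib+ja=b$, so it is either $x$ itself or $y^{b/a}$.
\begin{enumerate}
\item[$\circ$] If $a\nmid b$, then $v=x$ up to a scalar. Here $D_v=\partial/\partial y$ up to sign, giving case $(ii)$.
\item[$\circ$] If $a\mid b$, then in our setting $a,b$ are coprime (they come from dividing by $\gcd(\alpha,\beta)$), so $a=1$. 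Then $v=\lambda x-y^b$ up to a scalar. For $\lambda\neq0$ we compute $D_v\colon x\lmt -by^{b-1}$, $y\lmt-\lambda$, which is case $(iii)$ up to sign, with kernel $\Bbbk[\lambda x-y^b]$.
\end{enumerate}
\item[$\bullet$] The case where $v$ contains $y$ is symmetric. It gives $v=y$, hence case $(i)$, or, when $b=1$, gives $v=x^a-\lambda y$ with $\lambda\neq0$, hence case $(iv)$.
\end{enumerate}
Finally, each of the listed derivations is indeed homogeneous and locally nilpotent, since each is nilpotent on $x$ and $y$.
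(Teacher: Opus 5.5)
Your proof is correct, and it reaches the paper's four cases by a different and more self-contained route.

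\textbf{How the paper argues.} It notes that $\delta(x)=h'(v)\,\partial v/\partial y$ and $\delta(y)=-h'(v)\,\partial v/\partial x$ are semi-invariant. It then uses the fact that factors of semi-invariants are semi-invariant (the character group of $\TT$ is torsion-free) to conclude that $v$ is semi-invariant, and writes $v=x^py^q\sum_i\lambda_i(x^a/y^b)^i$. Irreducibility of a variable then gives $v\in\{\lambda_1x,\ \lambda_2y,\ \lambda_1x^a+\lambda_2y^b\}$. Finally it asserts, without proof, that the last polynomial is a variable only if $a=1$ or $b=1$.

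\textbf{How you argue.} You get homogeneity of $v$ directly from the gradedness of $\ker\delta=\Bbbk[v]$. The nonzero linear part of a variable then leaves only $\mu x+\nu y^{b/a}$ or $\mu y+\nu x^{a/b}$. So the fact that ``$\lambda_1x^a+\lambda_2y^b$ is a variable only if $a=1$ or $b=1$'' follows automatically instead of being asserted, and you need neither the divisibility lemma for semi-invariants nor the explicit description of semi-invariant polynomials.

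\textbf{Coprimality.} Your route also shows exactly where $\gcd(a,b)=1$ enters. The proposition as printed omits this hypothesis and is false without it: for $a=2$, $b=4$, the polynomial $v=\lambda x-y^2$ is a homogeneous variable whose derivation fits none of $(i)$--$(iv)$. The paper uses coprimality tacitly in its formula for semi-invariants; you were right to invoke the context ($a,b$ obtained by dividing by $\gcd(\alpha,\beta)$).

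\textbf{Two small fixes.}
\begin{enumerate}
\item[(1)] In Step 2, your parenthetical only rules out components of strictly smaller total degree; components of equal degree also need handling. Say instead that $\deg_{x,y}v_w\le\deg_{x,y}v$, which together with your inequality forces $\deg p=1$. Then $v_w=cv+e$ with $c\neq0$, and $e=0$ because neither side has a constant term. Hence $v=v_w/c$ is homogeneous.
\item[(2)] In Step 3, when $a=1$ the coefficient of $y^b$ may still be zero. This gives $v=x$ and case $(ii)$, which you should record. Conversely, once $v$ contains $x$, your $\lambda$ is automatically nonzero, so the qualifier ``for $\lambda\neq0$'' is unnecessary.
\end{enumerate}
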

\begin{proof} 
    We make use of Lemma~\ref{A^2-Lemma}. The elements $$\delta(x)={ {dh}\over{dv}}{\partial v\over\partial y} \quad \mbox{and} \quad \delta(y)=-{dh\over {dv}}{\partial v\over\partial x}$$ are ${\TT}$-semi-invariant. Since the character group of ${\TT}$ is torsion-free, ${\partial v/\partial y}$ and ${\partial v/\partial x}$ are also ${\TT}$-semi-invariant; see, for example~\cite[Lemma 2.3\,(iii)]{A}. It follows in turn that $v$ is ${\TT}$-semi-invariant; hence $v=x^py^q\sum\limits_i \lambda_i\left(x^a/ y^b\right)^i$ where $p,q\in\ZZ_{\ge0},\,\lambda_i\in\Bbbk$. Furthermore, $h$ is a monomial in~$v$. As $v$ is irreducible (being a variable), the possibilities are:
    $v=\lambda_1 x$, $v=\lambda_2 y$ or $v=\lambda_1 x^a+\lambda_2 y^b$ where $\lambda_i\neq0$. 
    The first two cases lead directly to statements $(i)$ and $(ii)$. 
    
    Consider the case $v=\lambda_1 x^a+\lambda_2 y^b$ where $\lambda_i\neq0$. An element $v$ of this form can be included in a transcendence basis of $\Bbbk[x,y]$ if and only if $a=1$ or $b=1$. Suppose for instance that $a=1$. We also may assume that $v= \lambda x- y^b$ and $h$ is proportional (over $\Bbbk$) to $v^q$, $q\in \ZZ_{>0}$. 
    Thus $\delta$ is proportional over $\Bbbk$ to the derivation
    $
    x\longmapsto b y^{b-1} v^{q-1}$, $y\longmapsto  \lambda v^{q-1},
    $ 
    a replica of the one in~$(iii)$. Then $\ker\delta=\kk[v]$ provided by Lemma~\ref{BasicLem}\,$(i)$.
    
    Case $b=1$ leads to statement~$(iv)$ in the same manner.
\end{proof}

\subsection{Case $x^2+y^2+z^\gamma=0$, $\gamma>2$}\label{CaseKotenkova}
Let the surface given by this equation be denoted by $X$. For $\gamma$ odd, the one-dimensional torus ${\TT}$ acts by the rule $t\colon\,x\longmapsto t^\gamma x,$ $y\longmapsto t^\gamma y,$ $z\longmapsto t^2z$ where $t\in {\TT}$. If $\gamma$ is even, the action is given by $t\colon\,x\longmapsto t^{\gamma/2} x,$ $y\longmapsto t^{\gamma/2} y,$ $z\longmapsto tz$.

Consider the toric variety $\{uv=z^\gamma\}$ where the torus $(\Bbbk^\times)^2$ acts via $(t_1,\,t_2)\colon\,u \lmt t_1^\gamma t_2^{-1}u$, $v\lmt t_2 v$, $z\lmt t_1 z$.  Under the linear change of variables $u=\ii x- y,\,v=\ii x+ y$, this variety is isomorphic to $X$. Moreover, the actions of ${\TT}$ and $(\kk^\times)^2$ are compatible via the embedding $\varphi\colon\,{\TT}\hookrightarrow(\Bbbk^\times)^2$, where $\varphi(t)=(t^2,\,t^\gamma)$ for $\gamma$ odd and $\varphi(t)=(t,\,t^{\gamma/ 2})$ for $\gamma$ even. Demazure roots, introduced in~\cite{D}, provide a well-known description of homogeneous LNDs in the toric case. We recall it to the extent needed to describe the ${\TT}$-homogeneous LNDs, since they all turn out to be $(\kk^\times)^2$-homogeneous as well.

\begin{notations}(see~\cite[Chapter~1]{CLS} for details)
\begin{enumerate}
    \item [$(i)$] With the torus $(\Bbbk^\times)^2$ are associated the lattice of one-parameter subgroups $N=\Hom(\Bbbk^\times,\,(\Bbbk^\times)^2)\simeq \ZZ^2$, which is dual to the character lattice $M=\Hom((\Bbbk^\times)^2,\,\Bbbk^{\times})\simeq \ZZ^2$. The corresponding $\QQ$-vector spaces are denoted by $N_{\QQ}=N\otimes_{\ZZ} \QQ$ and $M_{\QQ}=M\otimes_{\ZZ} \QQ$.
The toric variety $X$ corresponds to a rational polyhedral cone $\sigma_X\subseteq N_{\QQ}$ and its dual $\sigma_X^\vee\subseteq M_{\QQ}$, so that $\Bbbk[X]=\Bbbk[\sigma_X^\vee\cap M]$.
    \item [$(ii)$] The character corresponding to a lattice vector $m\in M$ is denoted by $\chi^m$.
    \item [$(iii)$] If $\rho$ is a ray of a cone $\sigma\subseteq N$, then $n_\rho$ denotes the first lattice vector on the ray $\rho$.
    \item [$(iv)$] There is a natural pairing $\langle\,\cdot\,,\,\cdot\,\rangle\,\colon\,N\times M\to\ZZ$.
\end{enumerate}
\end{notations}

\begin{definition}
    Let $N, M$ be dual lattices and $\sigma\subseteq N_{\QQ}$ a polyhedral cone. A \emph{Demazure root} associated to the ray $\rho$ of~$\sigma$ is defined as an $e\in M$ such that $\langle n_\rho,\,e\rangle=-1$ and $\langle n_{\rho'},\, e\rangle\ge0$ for all rays $\rho'\neq\rho$ of the cone $\sigma$. The set of Demazure roots corresponding to the ray $\rho$ will be denoted by~$S_\rho$.
\end{definition}

To describe the homogeneous LNDs on $X$, we are interested in $\sigma=\sigma_X$.

\begin{proposition}\label{PropHomCaseGamma}
    Any ${\TT}$-homogeneous LND on the variety $$X=\{x^2+y^2+z^\gamma=0\},\quad\mbox{ where $\gamma>2$},$$ is a replica of the derivation
    \begin{equation*}
        \delta_{0}\,\colon\,x\longmapsto \ii\gamma z^{\gamma-1},\quad y\longmapsto \gamma z^{\gamma-1},\quad z\longmapsto -2(\ii x+y),\qquad\ker\delta_0=\Bbbk[\ii x+y],
   \end{equation*}
   or
   $$
    \delta_{\infty}\,\colon\,x\longmapsto -\ii\gamma z^{\gamma-1},\quad y\longmapsto \gamma z^{\gamma-1},\quad z\longmapsto 2(\ii x-y),\qquad\ker\delta_\infty=\Bbbk[\ii x-y].
   $$
\end{proposition}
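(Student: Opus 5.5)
The plan is to pass to the toric model $\{uv=z^\gamma\}$ with $u=\ii x-y$, $v=\ii x+y$. There I will show that every ${\TT}$-homogeneous LND $\delta$ has kernel $\Bbbk[u]$ or $\Bbbk[v]$, and then recover $\delta$ from the relation, much as in the proof of Lemma~\ref{PropositionGoodDerivationType1}. Since $\ii x+y=v$ and $\ii x-y=u$, the derivations $\delta_0$ and $\delta_\infty$ are exactly those with $\delta(v)=0$, respectively $\delta(u)=0$. A direct check shows they are LNDs: each is nilpotent on $v$ (resp.\ $u$), $z$, and then on the remaining coordinate. So the work lies in the converse.

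\emph{Step 1: the final computation.} Suppose $\ker\delta\ni u$, i.e.\ $\delta(u)=0$. Applying $\delta$ to $uv=z^\gamma$ gives $u\,\delta(v)=\gamma z^{\gamma-1}\delta(z)$. In $\Bbbk[X]$ the elements $u$ and $z$ are coprime: the grading by $M$ is factorial, $u$ and $z$ are non-associated homogeneous primes, and we argue as in Lemma~\ref{LemmaCoprime}. Hence $\delta(z)=hu$ and $\delta(v)=\gamma hz^{\gamma-1}$ for some $h$. By Lemma~\ref{BasicLem}$(i)$ we get $h\in\ker\delta$, so $\delta=h\delta_\infty$ up to a constant. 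The case $\delta(v)=0$ gives a replica of $\delta_0$ in the same way. The displayed kernels follow because $\ker\delta_\infty$ is factorially closed of transcendence degree one and contains the irreducible element $u$; this implies $\ker\delta_\infty=\Bbbk[u]$, e.g.\ by~\cite[Principle~11]{Freu}.

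\emph{Step 2: reduce to $(\Bbbk^\times)^2$-homogeneous LNDs.} The ${\TT}$-weights of $u,v,z$ are positive, so the ${\TT}$-grading on $\Bbbk[X]$ is positive with $\Bbbk[X]_0=\Bbbk$. Decompose the ${\TT}$-homogeneous LND $\delta$ into $M$-homogeneous components. Their $M$-degrees all lie on one line, a fibre of the restriction map $M\to\ZZ$ induced by $\varphi$. Choose an order on $M$ that compares first by ${\TT}$-degree and then by a coordinate along that fibre. By Lemma~\ref{LemmaLNDHomDecomposition}, the extreme components $\delta_1$ and $\delta_p$ are $(\Bbbk^\times)^2$-homogeneous LNDs. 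By the Demazure description~\cite{D}, each is a replica of a root derivation, with kernel $\Bbbk[u]$ or $\Bbbk[v]$ (the rays of $\sigma_X$ correspond to the divisors $u=0$ and $v=0$). The main task is to show that $\delta$ itself has a single component, or at least that $u$ or $v$ lies in $\ker\delta$.

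\emph{Step 3 (main obstacle).} Here I would argue through the kernel. Since $\ker\delta$ is ${\TT}$-graded of transcendence degree one and $\Bbbk[X]_0=\Bbbk$, we have $\ker\delta=\Bbbk[f]$ for a ${\TT}$-homogeneous $f$ of positive degree. Note that $z\notin\ker\delta$, for otherwise $uv=z^\gamma\in\ker\delta$ would force $u,v\in\ker\delta$ and hence $\delta=0$.

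Using $uv=z^\gamma$, I write $f$ in the basis $u^az^c$, $v^bz^c$ with $0\le c<\gamma$. Homogeneity of fixed ${\TT}$-degree restricts which monomials can occur. Taking the highest and lowest $M$-homogeneous parts of $f$ with respect to the order above, these parts lie in $\ker\delta_p$, respectively $\ker\delta_1$, hence in $\Bbbk[u]$ or $\Bbbk[v]$. This forces $f$ to be a polynomial in $u$ and $v$ of fixed weight, hence a combination of $u^m$, $v^m$ and mixed monomials $u^a v^b$. Mixed monomials $u^av^b$ with $a,b>0$ must be excluded, since a kernel element divisible by $uv$ would put $u$ and $v$ in $\ker\delta$ by factorial closedness.

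What remains is to rule out $f=\alpha u^m+\beta v^m$ with $\alpha\beta\neq0$. I would do this by comparing with the extreme components once more, or by observing that the fibre $\{f=0\}$ is then a union of $m$ curves through the fixed point rather than a union of orbits of the $\mathbb G_a$-action. This last exclusion is where I expect the real difficulty. If it fails, I would fall back on a direct degree argument on $\delta(z)$, using $\delta(f)=0$ and Lemma~\ref{BasicLem}$(iv)$ applied to $u$ and $v$.
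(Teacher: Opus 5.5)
There is a genuine gap: the step that carries the content of the proposition is left open. Your Steps~1--2 reduce everything to showing that a ${\TT}$-homogeneous LND annihilates $u$ or $v$, i.e.\ is $(\Bbbk^\times)^2$-homogeneous. The paper does not prove this by hand. It takes it from item~3.2 of Proposition~6 in~\cite{Kotenkova}, and this is where $\gamma>2$ is used. The hypothesis is essential: for $\gamma=2$, Proposition~\ref{PropHomCase222} gives $\ker\delta_\lambda=\Bbbk[v-\lambda^2u+2\ii\lambda z]$, which for $\lambda\neq0$ contains neither $u$ nor $v$. Your Step~3 never uses $\gamma>2$, and you leave its final exclusion undecided, so as written it cannot prove the statement.

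Two intermediate claims in Step~3 are also wrong, and Step~1 is misjustified.
\begin{itemize}
\item For even $\gamma$, the element $z^{\gamma/2}$ has the same ${\TT}$-weight as $u$ and $v$, and its $M$-degree $(\gamma/2,0)$ lies strictly between theirs on the fibre. The extreme $M$-parts of $f$ therefore say nothing about such middle terms, and $f$ need not be a polynomial in $u,v$.
\item A term $u^av^b$ occurring in $f$ does not make $f$ divisible by $uv$. Your exclusion of mixed monomials is a non sequitur.
\item A route that does work: each ${\TT}$-weight space is $z^c$ times binary forms in $u,v$ for odd $\gamma$, resp.\ in $u,v,z^{\gamma/2}$ modulo $uv=(z^{\gamma/2})^2$ for even $\gamma$. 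Since $z\notin\ker\delta$, factorial closedness and $\ker\delta=\Bbbk[f]$ give $f=\alpha u+\beta v$, resp.\ $f=\alpha u+\beta z^{\gamma/2}+\epsilon v$. A local slice makes the general fibre of $f$ isomorphic to $\A^1$. You must then show this fails unless $f\in\Bbbk u\cup\Bbbk v$.
\item For odd $\gamma$ with $\alpha\beta\neq0$, the fibre $f=c\neq0$ is the smooth curve $u(c-\alpha u)=\beta z^\gamma$, of genus $(\gamma-1)/2\geq1$, so it is not $\A^1$. For even $\gamma$ one needs a similar count via the cyclic cover $z\mapsto z^{\gamma/2}$ of degree $\gamma/2\geq2$; this is exactly what breaks down at $\gamma=2$.
\item In Step~1, the $M$-grading of $\Bbbk[u,v,z]/(uv-z^\gamma)$ is not factorial. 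Also $u$ and $z$ are not coprime: $v\cdot u=z^{\gamma-1}\cdot z$, yet $u\nmid z^{\gamma-1}$. Lemma~\ref{LemmaCoprime} concerns the $\HH$-grading, for which $u$ is not homogeneous.
\item The conclusion of Step~1 still holds. The extension of $\delta$ to $\Bbbk[X][u^{-1}]=\Bbbk[u^{\pm1}][z]$ is a $\Bbbk[u^{\pm1}]$-linear LND, so $\delta(z)\in\Bbbk[u^{\pm1}]\cap\Bbbk[X]=\Bbbk[u]$. Then $\delta(v)=\gamma z^{\gamma-1}\delta(z)/u\in\Bbbk[X]$ forces $u\mid\delta(z)$.
\end{itemize}
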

\begin{proof}
    Let us first show that all ${\TT}$-homogeneous LNDs on $X$ are $(\Bbbk^\times)^2$-homogeneous. The action $(\Bbbk^\times)^2\curvearrowright X$ corresponds to the cones $$\sigma_X^\vee=\cone((1,0),\,(1,\gamma))\subset M, \qquad \sigma_X=\cone((0,1),\, (\gamma, -1))\subset N.$$
    The embedding ${\TT}\subset(\Bbbk^\times)^2$ induces an embedding of lattices of one-parameter subgroups $\Hom(\Bbbk^\times, {\TT})\subset \Hom(\Bbbk^\times, (\Bbbk^\times)^2)=N$, whose image we denote by~$\Gamma_T$. It is easy to see that the line $\Gamma_T$ is generated by the vector $(2,\gamma)$ when $\gamma$ is odd, and by $(1, \gamma/2)$ when $\gamma$ is even. In both cases we are in the situation of item~3.2 of Proposition~6 in~\cite{Kotenkova}. Thus, all ${\TT}$-homogeneous LNDs are $(\Bbbk^\times)^2$-homogeneous.

     Recall that the algebra $\Bbbk[X]$ is generated by the characters $\chi^m\in\sigma_X^\vee\cap M$. In our case, one can take as generators $$\chi^{(\gamma,-1)}=u=\ii x-y, \quad\chi^{(0,1)}=v=\ii x+y, \quad\chi^{(1,0)}=z.$$ It follows from~\cite[Theorem~2.7]{Liendo} that each $(\Bbbk^\times)^2$-homogeneous LND is proportional over $\Bbbk$ to a derivation $\partial_{\rho, e}\,\colon\, \chi^m\longmapsto\langle m,\,\rho\rangle\,\chi^{m+e}$ for some ray $\rho$ of the cone $\sigma_X$ and some Demazure root $e\in S_\rho$. Let the ray $\rho_1$ be generated by the vector $(0,1)$ and $\rho_2$ by $(\gamma, -1)$. A straightforward computation shows that the Demazure roots are of the form
     $$
     S_{\rho_1}=\{\, (-1, p),\quad p\in\ZZ_{>0}\,\}\qquad\mbox{ and }\qquad S_{\rho_2}=\{\, (\gamma p-1, -p),\quad p\in\ZZ_{>0}\,\}.
     $$
     Consequently,
     $$\partial_{\rho_1,\, e}\,\colon\,u\lmt\gamma z^{\gamma-1}v^{p-1},\quad v\lmt0,\quad z\lmt v^p,\quad e=(-1, p),\quad p\in\ZZ_{>0}$$
     and
     $$\partial_{\rho_2,\, e}\,\colon\,u\lmt0,\quad v\lmt \gamma z^{\gamma-1}u^{p-1},\quad z\lmt u^p,\quad e=(\gamma p-1, -p),\quad p\in\ZZ_{>0}.$$
     Passing to the coordinates $x,y,z$, we obtain $$\partial_{\rho_1,e}=-2(\ii x+y)^{p-1}\delta_0\quad\mbox{ and }\quad \partial_{\rho_2, e}=2(\ii x-y)^{p-1}\delta_\infty,$$ where the $\delta_i$ are described in the statement.

     By Lemma~\ref{BasicLem}\,$(i)$, any ${\TT}$-homogeneous element $h\in\ker\delta_i$ is a monomial in $\ii x+y$ or $\ii x-y$, respectively. Since $\ker\delta_i$ is generated as a subalgebra by homogeneous elements, we obtain $\ker\delta_0=\Bbbk[\ii x+y]$ and $\ker\delta_\infty=\Bbbk[\ii x-y]$.
\end{proof}

\begin{remark}\label{RemarkCompareT&H}
    The algebra $\kk[X]$ with $X$ as in Proposition~\ref{PropHomCaseGamma}, admits two nontrivial equivalence classes of ${\TT}$-homogeneous LNDs. At the same time, there are no $\HH$-homogeneous LNDs of $\kk[X]$, see~\cite[Corollary~1]{R} or~\cite[Theorem~1]{GZ}. Another example of such a situation (where the number of equivalence classes of ${\TT}$-homogeneous LNDs is even infinite) can be found in Proposition~\ref{PropHomCase222}.

\end{remark}

\subsection{Case $x^2+y^2+z^2=0$}\label{PropHomCaseDivisor}
Let $X$ denote the trinomial surface given by this equation.
The one-dimensional torus ${\TT}$ acts by the rule $t\,\colon\, x\longmapsto tx,\,y\lmt ty,\,z\lmt tz$. Again we use the fact that the substitution $u=\ii x-y,\,v=\ii x+y$ turns $X$ into a toric variety with the action of the two-dimensional torus $(\Bbbk^\times)^2$ given by the rule $(t_1,\,t_2)\,\colon\,u\lmt t_1^2t_2^{-1}u,$ $v\lmt t_2v$, $z\lmt t_1z$. The equivariant embedding $\varphi\,\colon\,{\TT}\hookrightarrow(\Bbbk^\times)^2$ is given by the formula $t\lmt(t,\,t)$. Let $N_{\TT}$ and $N_X$ denote the lattices of one-parameter subgroups of the tori ${\TT}$ and $(\Bbbk^\times)^2$, respectively. The action $(\Bbbk^\times)^2\curvearrowright X$ corresponds to the cone $\sigma_X=\cone((1,0),\,(1,2))$, and the embedding $\varphi$ corresponds to the diagonal embedding of the lattices of one-parameter subgroups $N_{{\TT}}\hookrightarrow N_X$. In contrast to case~\ref{CaseKotenkova}, not all ${\TT}$-homogeneous LNDs are $(\Bbbk^\times)^2$-homogeneous. We therefore use the technique based on the notion of a proper polyhedral divisor, developed in~\cite{Altmann-Hausen, Liendo}. A brief description of this technique can be found in~\cite[Section~1]{AL}.

\begin{notations}
    Let $N$ and $M$ be dual lattices (in general, not related to $N_{\TT}$ and $N_X$). Let $\sigma\subseteq N_\QQ$ be a polyhedral cone, and let $\DD=\sum_{p\in Y} \Delta_p\cdot p$ be a proper $\sigma$-polyhedral divisor on a smooth semiprojective curve $Y$. For $m\in \sigma^\vee\cap M$, set
    \begin{equation*}
        \begin{split}
            \DD(m)&=\sum_{p\in Y} \min_{q\in\Delta_p}\langle q, m\rangle\cdot p,\\
            A_m&=H^{0}(Y,\, \OO(\DD(m)))= \{f\in\Bbbk(Y)\,|\,\divisor (f)+\DD(m)\ge0\},\\
            A[Y,\DD] &=\bigoplus_{m\in\sigma^\vee\cap M}A_m. 
        \end{split}
    \end{equation*}
    Note that $A[Y,\DD]$ has a natural structure of a $\kk$-algebra, and the $\sigma^\vee\cap M$-grading determines on it an action of the torus $N\otimes_{\ZZ} \Bbbk^\times$.
\end{notations}
\begin{lemma}\label{LemmaA[Z,D]}
There exists an equivariant isomorphism $\Bbbk[X]\simeq A[\PP^1,\,\DD]$, where $\DD=\QQ_{\ge 2}\{\infty\}$.
\end{lemma}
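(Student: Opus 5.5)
The plan is to compute $A[\PP^1,\DD]$ directly and match it with an explicit presentation of $\Bbbk[X]$. Here $N=M=\ZZ$, $\sigma=\QQ_{\ge0}$, and $\DD$ has the single nontrivial coefficient $\Delta_\infty=\QQ_{\ge2}=2+\sigma$ at the point $\infty$. For $m\in\sigma^\vee\cap M=\ZZ_{\ge0}$ we get $\min_{q\ge2}qm=2m$, so $\DD(m)=2m\cdot\{\infty\}$. Choosing an affine coordinate $t$ on $\PP^1\setminus\{\infty\}$, we have
$$A_m=H^0(\PP^1,\OO(2m\cdot\{\infty\}))=\{f\in\Bbbk[t]\mid\deg f\le 2m\}.$$
Writing $\chi^m$ for the grading symbol, this gives $A[\PP^1,\DD]=\bigoplus_{m\ge0}\Bbbk[t]_{\le 2m}\,\chi^m$.

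Next I identify generators and relations. Set $U=\chi^1$, $Z=t\chi^1$ and $V=t^2\chi^1$; these span $A_1$. Every basis element $t^k\chi^m$ with $k\le 2m$ is a product of $m$ elements of $\{U,Z,V\}$, because any $k\le 2m$ is a sum of $m$ terms from $\{0,1,2\}$. Hence $U,Z,V$ generate the algebra, and the relation $UV=Z^2$ clearly holds. So there is a surjection
$$\Bbbk[u,v,z]/(uv-z^2)\to A[\PP^1,\DD].$$
To see it is injective, I compare graded dimensions, with $u,v,z$ of degree $1$. In degree $m$, the quotient has basis $u^av^bz^\varepsilon$ with $\varepsilon\in\{0,1\}$ and $a+b+\varepsilon=m$. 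There are $2m+1$ such monomials, which equals $\dim A_m$. Alternatively, both rings are two-dimensional domains and the quadric cone is irreducible, so the surjection is an isomorphism.

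Finally, I transport this to $X$. The substitution $u=\ii x-y$, $v=\ii x+y$ used above gives $uv=-(x^2+y^2)=z^2$ on $X$, so $\Bbbk[X]\simeq\Bbbk[u,v,z]/(uv-z^2)$. The ${\TT}$-action $t\colon x\mapsto tx$, $y\mapsto ty$, $z\mapsto tz$ is exactly the grading that puts $u,v,z$ in degree $1$. This matches the $\sigma^\vee\cap M$-grading of $A[\PP^1,\DD]$, under which $U,Z,V\in A_1$. Hence the isomorphism is equivariant.

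I expect no serious obstacle. The only point needing care is checking that the combinatorial data really produce degree bounds $2m$, and not $\lfloor 2m\rfloor$ with some shift. It would also be reasonable to note that $\DD$ is indeed a proper polyhedral divisor: $\deg\DD=\Delta_\infty$ is strictly contained in $\sigma$, and $\DD(m)$ is big for $m>0$.
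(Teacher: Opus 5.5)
Your proof is correct, but it runs in the opposite direction from the paper's. The paper \emph{derives} the pair $(\PP^1,\DD)$ in three cited steps:
\begin{itemize}
\item it invokes the general existence theorem for a presentation $\Bbbk[X]\simeq A[Y,\DD]$;
\item it applies the Altmann--Hausen recipe for a subtorus of the big torus of a toric variety: the exact sequence $0\to N_{\TT}\to N_X\to N_Y\to 0$ with $P(a,b)=a-b$, the quotient fan giving $Y\simeq\PP^1$, and slices of $\sigma_X$ giving $\DD=\QQ_{\ge1}(\{0\}+\{\infty\})$;
\item it moves $\DD$ within its linear equivalence class to $\QQ_{\ge2}\{\infty\}$.
\end{itemize}
The paper never writes the isomorphism down inside the proof. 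The explicit identification $u=\chi$, $z=\chi\xi$, $v=\chi\xi^2$ only appears afterwards, in Remark~\ref{RemarkA[Z,D]}.

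You instead take $\DD$ as given and \emph{verify} it. You compute $A_m$ as the polynomials of degree $\le 2m$, show that $\chi$, $t\chi$, $t^2\chi$ generate, and match the graded dimensions $2m+1$ with those of $\Bbbk[u,v,z]/(uv-z^2)$. That ring is free over $\Bbbk[u,v]$ on $1,z$, so your claimed monomial basis really is a basis.

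What each route buys:
\begin{itemize}
\item Your route is self-contained and needs nothing beyond the definition of $A[Y,\DD]$. In effect it promotes the paper's Remark to a proof and dispenses with the three external citations. But it presupposes knowing which $\DD$ to test.
\item The paper's route explains where $\DD$ comes from. It would also work unchanged for other subtorus actions on toric varieties, where guessing the divisor is not so easy.
\end{itemize}
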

\begin{proof}
    By~\cite[Theorem 1.2]{AL}, there exists an equivariant isomorphism of algebras $\Bbbk[X]\simeq A[Y,\DD]$ for some semiprojective curve $Y$, cone $\sigma$ and proper $\sigma$-polyhedral divisor $\DD$ on $Y$. In~\cite[Section~11]{Altmann-Hausen}, it is described how to construct the pair $(Y,\,\DD)$ in the case when $X$ is an affine toric variety. Namely, the fan of the toric variety $Y$ is defined via the exact sequence
    \[
    \begin{tikzcd}
        0 \arrow[r] &N_{\TT} \arrow[r, "F"]  &N_X \arrow[l, bend left,  "S"] \arrow[r,"P"] &N_Y \arrow[r] &0,\\
    \end{tikzcd}
    \]
    where $N_Y:= N_X/N_{\TT}$, $F$ is the embedding of lattices of one-parameter subgroups
    and $S\circ F=\id$. In our case, $F$ is the diagonal embedding and $N_Y\simeq \ZZ$. Set also $P(a,b)=a-b$ and $S(a,b)=a$. The variety $Y$ is determined by the coarsest fan $\Sigma_Y$ each cone of which is contained in the image of a face of the cone $\sigma_X$. Therefore, $\Sigma_Y=\{\QQ_{\ge0}, \QQ_{\le0}, 0\}$, i.e., $Y\simeq\PP^1$. We also obtain $\DD=\QQ_{\ge1}(\{0\}+\{\infty\})$ and $\sigma=\QQ_{\ge0}$. One can replace $\DD$ by $\QQ_{\ge 2}\{\infty\}$, using~\cite[Corollary~1.3]{AL}.
\end{proof}
\begin{remark}\label{RemarkA[Z,D]}
    Let $\xi$ be a coordinate on $\A^1=\PP^1\setminus\{\infty\}$ and let $\chi$ be a generator of the character group of the torus ${\TT}$.  Then $A_m=\bigoplus_{k=0}^{2m}\Bbbk\xi^k$ and $$A[\PP^1,\,\DD]=\bigoplus_{m\in\ZZ_{\ge0}}\bigoplus_{k=0}^{2m}\Bbbk\chi^m\xi^k\simeq \Bbbk[u,v,z]/(uv-z^2),$$ where $u=\chi,\,v=\chi\xi^2,\,z=\chi\xi$.
\end{remark}

\begin{definition}
    A derivation $\delta$ of the algebra $\Bbbk[X]$ extends uniquely to a derivation of the field of fractions $\Bbbk(X)$. Let $\Bbbk(X)^{\TT}$ denote the field of ${\TT}$-invariant rational functions. A ${\TT}$-homogeneous LND $\delta$ is said to be of \emph{vertical type} if $\delta(\Bbbk(X)^{{\TT}})=0$, and of \emph{horizontal type} otherwise.
\end{definition}
\begin{proposition}\label{PropHomCase222}
    Any ${\TT}$-homogeneous LND on the variety $X=\{x^2+y^2+z^2=0\}$ is a replica of $\delta_\infty$ or of $\delta_\lambda$, $\lambda\in\kk$, where
    $$
    \delta_\infty\,\colon\,x\lmt -\ii z,\quad y\lmt z,\quad z\lmt \ii x-y,\qquad \ker\delta_\infty=\Bbbk[\ii x-y],
    $$
    and
    $$\delta_\lambda\,\colon\,x\lmt 2\lambda y+\ii (1+\lambda^2)z,\qquad y\lmt -2 \lambda x+(1-\lambda^2)z,$$
    $$z\lmt -\ii(1+\lambda^2)x-(1-\lambda^2)y,\qquad \ker\delta_\lambda=\Bbbk[(1-\lambda^2)x-\ii(1+\lambda^2)y + 2\lambda z].$$
\end{proposition}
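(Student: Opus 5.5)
We work with the model $X\simeq\{uv=z^2\}$, $u=\ii x-y$, $v=\ii x+y$, and use Lemma~\ref{LemmaA[Z,D]} and Remark~\ref{RemarkA[Z,D]}, so that $\Bbbk[X]=A[\PP^1,\DD]$ with $\DD=\QQ_{\ge2}\{\infty\}$, $u=\chi$, $z=\chi\xi$, $v=\chi\xi^2$. Since ${\TT}$ is one-dimensional, every ${\TT}$-homogeneous LND is of vertical or horizontal type, and we treat the two cases separately using Liendo's classification~\cite{Liendo}.

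\emph{Vertical type.} Here $\delta$ kills $\Bbbk(X)^{\TT}=\Bbbk(\xi)$. The ${\TT}$-weight of $\delta$ must then be positive, since $\delta$ maps $\chi^m\xi^k$ to a multiple of $\chi^{m+e}\xi^k$. By~\cite{Liendo}, vertical LNDs exist only if the degree of $\DD$ is not the whole $\sigma$, and they are of the form $\chi^e f\,\partial$, where $\partial$ is the derivation along $\chi$ and $f\in\Bbbk(\xi)$ satisfies the Demazure-type condition $\divisor(f)+\DD(e)\ge0$ together with the appropriate vanishing on the boundary. Alternatively, we can compute directly: $\delta(\xi)=0$ forces $\delta(\chi^m\xi^k)=m\,g\,\chi^{m-1}\xi^k$ for a fixed rational function $g$ of degree $e+1$. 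The requirement that $\delta(u)=g$, $\delta(z)=g\xi$ and $\delta(v)=g\xi^2$ all lie in the algebra, together with local nilpotency, should force $g=c\chi^{p}$. Tracking which coordinate is annihilated, this yields replicas of exactly one of the derivations in the statement, the one whose kernel is $\Bbbk[u]=\Bbbk[\ii x-y]$, that is $\delta_\infty$. We check directly that $\delta_\infty$ is a derivation preserving $x^2+y^2+z^2$, and that it is nilpotent on $x,y,z$.

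\emph{Horizontal type.} By~\cite[Theorem~3.28]{Liendo}, homogeneous horizontal LNDs on $A[\PP^1,\DD]$ require $Y\simeq\A^1$ or $\PP^1$, with all but at most one (respectively two) points having integral vertices. They are determined by the choice of a point $z_\infty$ on $Y$ playing the role of the boundary point, together with a root-type datum $e$. Because $\DD=\QQ_{\ge2}\{\infty\}$ can be moved by a principal divisor (Corollary~1.3 in~\cite{AL}) to be supported at any point, the point $z_\infty$ ranges over all of $\PP^1$. Up to replicas, the datum $e$ should be essentially unique for each choice of point; this is the minimal-degree case, of degree $0$, which is linear in $x,y,z$.

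Rather than follow Liendo's formulas in detail, we argue concretely. A degree-$0$ LND $\delta$ acts linearly on $\langle x,y,z\rangle$, preserves the quadric, and is nilpotent, so it is a nilpotent element of $\mathfrak{so}_3\simeq\mathfrak{sl}_2$. The nilpotent elements of $\mathfrak{so}_3$ form the null cone, which is parametrized by isotropic vectors $w=(w_1,w_2,w_3)$ with $\sum w_i^2=0$ through $\delta(q)=w\times q$, and $\ker\delta=\Bbbk[\langle w,q\rangle]$. Parametrizing the isotropic conic by $w=((1-\lambda^2),-\ii(1+\lambda^2),2\lambda)$, for $\lambda\in\Bbbk$, together with the point at infinity, recovers $\delta_\lambda$ and $\delta_\infty$; we verify the stated kernels by noting that each is the degree-one invariant linear form and applying Lemma~\ref{BasicLem}.

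\emph{Main obstacle.} The key step is to show that every horizontal LND, and every LND of positive degree, is a replica of a degree-$0$ one, that is, to exclude genuinely new higher-degree horizontal LNDs. We would try the following. If $\delta$ is homogeneous of degree $e>0$ with kernel $K$, then $K$ is generated by a single homogeneous element $f$, because $\mathrm{trdeg}\,K=1$ and $K$ is factorially closed. By Lemma~\ref{LemmaRationalInvariants}-type weight considerations, $f$ is a linear form or a product of forms; factorial closedness together with the irreducibility of the fibres forces $f$ to be a linear form $\ell$ with $\ell^2$ dividing no relation, so that $\ell$ is isotropic. Two LNDs with the same kernel differ by a factor from $\mathrm{Frac}(K)$ (\cite[Principle~12]{Freu}). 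Hence $\delta=(a/b)\delta_w$, and since $\delta_w$ is irreducible (its image generates the unit ideal away from the origin), $\delta=a\delta_w$ with $a\in K$.
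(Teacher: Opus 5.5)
Your degree-zero classification is correct, and it is a pleasant elementary substitute for part of the paper's computation. A degree-$0$ LND is a nilpotent element of $\mathfrak{so}_3$, i.e.\ $q\mapsto w\times q$ with $w\cdot w=0$; taking $w=(1-\lambda^2,-\ii(1+\lambda^2),2\lambda)$, resp.\ $w=(1,\ii,0)$, gives $-\delta_\lambda$, resp.\ $-\delta_\infty$.

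The vertical paragraph, however, is wrong: $\Bbbk[X]$ has no vertical LNDs at all. The paper gets this from the fact that the only Demazure root of $\sigma=\QQ_{\ge0}$ is $-1$ and $\OO(\DD(-1))$ has no nonzero sections. Directly: a vertical $\delta$ kills $\xi$, so $\xi\in\Quot(\ker\delta)$. Since $\ker\delta$ is graded of transcendence degree one, it cannot contain a homogeneous element of positive degree (such an element would be algebraically independent of $\xi$). Hence $\ker\delta\subseteq\Bbbk[X]_0=\Bbbk$, which is absurd. Your own ansatz shows the same thing:
\begin{itemize}
\item $g=c\chi^p$ with $p=0$ gives $\delta(z)=c\xi\notin\Bbbk[X]$;
\item $p=1$ gives a multiple of the Euler derivation, which is not locally nilpotent;
\item $p\ge2$ gives $u\mid\delta(u)$, hence $\delta=0$ by Lemma~\ref{BasicLem}$(iii)$.
\end{itemize}
In particular $\delta_\infty$ is horizontal, since $\delta_\infty(z/u)=1$. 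Attributing it to the vertical case is an error that is harmless only because your degree-zero analysis produces $\delta_\infty$ anyway.

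The genuine gap is the reduction of an arbitrary homogeneous LND to degree zero. This is exactly what the paper obtains from the description of horizontal LNDs by colorings and coherent pairs. For every Demazure root $e$ it computes $\partial_e$ explicitly and finds $u^e\delta_\infty$, resp.\ $(v-2\lambda z+\lambda^2u)^e$ times a fixed linear LND. Your substitute does not work as written:
\begin{itemize}
\item Since ${\TT}$ acts by scalars, weight considerations give nothing: every homogeneous element is a form.
\item Factorial closedness plus irreducibility of fibres does not force $\deg f=1$.
\end{itemize}
What you need is that the fibres $\{f=c\}$, $c\neq0$, are isomorphic to $\A^1$: they are permuted by ${\TT}$, and the general one is a $\mathbb G_a$-orbit. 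Then an argument such as the following works. Let $C\simeq\PP^1$ be the conic; the map $\{f=1\}\to C\setminus\{f=0\}$ is finite \'etale of degree $d=\deg f$. Comparing Euler characteristics gives $1=d(2-s)$, where $s$ is the number of zeros of $f$ on $C$. Thus $d=1$, $s=1$, and $f=\langle w,q\rangle$ with $w$ isotropic.

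Your last step is also incorrect. The image of $\delta_w$ does not generate the unit ideal off the origin: the coordinates of $w\times q$ vanish on the whole line $\Bbbk w$, which lies on $X$ because $w$ is isotropic. For $\delta_\infty$ the image ideal is $(u,z)$, which cuts out the $v$-axis. Moreover, ``irreducible implies replica'' is a statement about UFDs, and $\Bbbk[X]$ is not factorial. Use homogeneity instead. From $a\delta=b\delta_w$ with $a,b\in\Bbbk[\ell]$ you get $\delta=c\,\ell^k\delta_w$ for some $k\in\ZZ$. Then $k\ge0$ because, for example, $u^{-1}\delta_\infty(v)=2z/u\notin\Bbbk[X]$.
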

\begin{proof}
    We use the representation $\Bbbk[X]\simeq A[\PP^1,\,\DD]$, where $\DD=\QQ_{\ge 2}\{\infty\}$, from Lemma~\ref{LemmaA[Z,D]} and Remark~\ref{RemarkA[Z,D]}. By~\cite[Lemma~2]{A_Rigid}, there are no ${\TT}$-homogeneous LNDs of vertical type on $\Bbbk[X]$. This fact also follows from~\cite[Theorem~1.7]{AL}, since the only Demazure root for~$\sigma$ is $e=-1$, and the sheaf $\OO(\DD(-1))$ has no nonzero global sections on~$\PP^1$.

    The ${\TT}$-homogeneous LNDs of horizontal type on an algebra of the form $A[\PP^1,\DD]$ correspond to coherent pairs $(\Tilde \DD,\, e)$, where $\Tilde\DD$ is a coloring of the divisor $\DD$; see~\cite[Definitions~1.8,~1.9 and Theorem~1.10]{AL}. The explicit form of the LND is given by formula~(2) in~\cite{AL}. In our case, any coloring of the divisor $\DD$ has the form $\Tilde\DD=\{\DD;\, v_p,\,p\in\PP^1\setminus \{p_\infty\}\}$ for some point $p_\infty\in\PP^1$. From the form of the divisor $\DD$, it follows that $v_p=0$ for $p\neq \infty$, and $v_\infty=2$.

    {\bf Case $p_\infty=\infty$.} The divisor $\DD$ restricts trivially to $\A^1=\PP^1\setminus\{p_\infty\}$, i.e., all coefficients of $\DD|_{\PP^1\setminus\{p_\infty\}}$ equal $\QQ_{\ge0}$. Thus, we have the cone $\tilde\omega=\cone((0,1),\,(2,-1))$ with distinguished ray $\tilde\rho$ spanned by the vector $(0,1)$. The set of Demazure roots has the form $$S_{\tilde\rho}=\{(e,-1)\mid e\in\ZZ_{\ge0}\}.$$ Since all coefficients of the divisor $\DD$ have exactly one vertex, which moreover has integer nonnegative coordinate, every pair $(\Tilde\DD, e),\,\,e\in\ZZ_{\ge0},$ is coherent. By formula~(2) in~\cite{AL}, the corresponding LNDs $\partial_e$ are of the form $\partial_e(\chi^m\xi^r)=r\chi^{m+e}\xi^{r-1}$. Consequently,
    \begin{equation*}
        \begin{split}
            \partial_e(u) &=\partial_e(\chi)=0,\\
            \partial_e(v) &= \partial_e(\chi\xi^2) = 2\chi^{1+e}\xi=2zu^e,\\
            \partial_e(z) &= \partial_e(\chi\xi) = \chi^{1+e}=u^{1+e}.
        \end{split}
    \end{equation*}  Thus, $\partial_e=u^e \delta_\infty$, where $\delta_\infty\,\colon\,u\lmt 0,$ $v\lmt 2z,$ $z\lmt u.$ By Lemma~\ref{BasicLem}$(i)$, $\ker\delta_\infty=\Bbbk[u]$. Passing to the coordinates $x,y,z$, we obtain
    $$
    \delta_\infty\,\colon\,x\lmt-\ii z,\quad y\lmt z,\quad z\lmt\ii x-y,\qquad \ker\delta_\infty=\Bbbk[\ii x-y].
    $$

    {\bf Case $p_\infty\neq\infty$.} Then $\DD|_{\PP^1\setminus\{p_\infty\}}=\QQ_{\ge2}\{\infty\}$, $\tilde\omega=\cone((2,1),\,(0,-1))$ and the distinguished ray $\tilde\rho$ is spanned by the vector $(2,1)$. The set of Demazure roots has the form
    $$S_{\tilde\rho}=\{(e,-1-2e)\mid e\in\ZZ_{\ge0}\}.$$

    Let $\lambda\in\Bbbk$ denote the $\xi$-coordinate of the point $p_\infty$. We need to choose a coordinate $\eta$ such that $p_\infty$ is given by the equality $\eta=\infty$. Set $\eta=(\xi-\lambda)^{-1}$, i.e., $\xi=\eta^{-1}+\lambda$, or $\eta^{-1}=\xi-\lambda$. Then formula~(2) of~\cite{AL} gives $$\partial_e(\chi^m\eta^r)=(2m+r)\chi^{m+e}\eta^{r-1-2e}.$$
    Let us express the variables $u,v,z$ in terms of $\chi, \eta$. We have $$u=\chi, \qquad v=\chi\xi^2=\chi\eta^{-2}+2\lambda\chi\eta^{-1}+\lambda^2\chi,\qquad z=\chi\xi=\chi\eta^{-1}+\lambda\chi.$$ Substituting into the formula for $\partial_e$, we obtain
    \begin{equation*}
        \begin{split}
            \partial_e(u) &=
    2(z-\lambda u)(v-2\lambda z+\lambda^2 u)^e,\\ \partial_e(v) &= 2\lambda(v-\lambda z)(v-2\lambda z+\lambda^2 u)^e,\\
    \partial_e(z) &= (v-\lambda^2 u)(v-2\lambda z+\lambda^2 u)^e.
        \end{split}
    \end{equation*}
    Thus, $\partial_e$ is a replica of the derivation $$\delta\,\colon\,u\lmt 2(z-\lambda u),\quad v\lmt 2\lambda(v-\lambda z),\qquad z\lmt v-\lambda^2 u$$ and $\ker\delta=\Bbbk[v-2\lambda z+\lambda^2 u]$ by Lemma~\ref{BasicLem}$(i)$. In coordinates $x,y,z$, we obtain $$\delta\colon\,x\lmt -2\ii\lambda y-\ii(1-\lambda^2)z,\qquad y\lmt 2\ii\lambda x-(1+\lambda^2)z,$$
    $$z\lmt \ii(1-\lambda^2)x+(1+\lambda^2)y,$$
    $\ker\delta=\Bbbk[\ii(1+\lambda^2)x+(1-\lambda^2)y - 2\lambda z].$
    It is easy to see that $\delta=-\delta_{\ii\lambda}$.
\end{proof}
\begin{remark}
    Note that $\delta_\infty$ and $\delta_0$ are proportional to the corresponding LNDs from Proposition~\ref{PropHomCaseGamma} at $\gamma=2$. Permutations of the variables $x,y,z$ take the LNDs $\delta_\lambda$, where $\lambda\in\{0,\infty,\pm1,\,\pm\ii\}$, to LNDs proportional to those in this set.
\end{remark}

\section{Proofs of Theorems~\ref{TheoremType2Classes} and~\ref{TheoremType2}}\label{SectionProofs}

\begin{proof}[Proof of Theorem~\ref{TheoremType2}]
    Let $\delta$ be a nonzero ${\TT}$-homogeneous LND on $R$. By Lemma~\ref{LemmaHLND}, it suffices to consider the case when the variables $S_k$ are absent. Then there exists a tuple $C$ such that $\delta(T_{ij})=0$ for $j\neq c_i$.
   Let $\KK$ be the algebraic closure of the field obtained from $\Bbbk$ by adjoining all variables $S_k$ and $T_{ij}$ with $j\neq c_i$.
    Consider the trinomial surface $Z$ over the field $\KK$, given by the same equation as $\Spec R$. The equation has the form $x^{l_{0c_0}}+y^{l_{1c_1}}+z^{l_{2c_2}}=0$ after the substitution
    \begin{equation*}
    \begin{split}
        x &= T_{01}\sqrt[l_{01}]{T_{02}^{l_{02}}\dots T_{0n_0}^{l_{0n_0}}},\\
        y &= T_{11}\sqrt[l_{11}]{T_{12}^{l_{12}}\dots T_{1n_1}^{l_{1n_1}}},\\
        z &= T_{21}\sqrt[l_{21}]{T_{22}^{l_{22}}\dots T_{2n_2}^{l_{2n_2}}}.
    \end{split}
    \end{equation*} 
    It is clear that $\delta$ descends to a nonzero LND on $\KK[Z]$, which is homogeneous with respect to the action of the one-dimensional torus $\KK^\times$ on $Z$. By the rigidity criterion~\cite[Theorem~4]{EGSh}, $Z$ is given (possibly after a permutation of variables) by one of the following equations:
    \begin{enumerate}
        \item [{\rm (A)}] $x^\alpha+y^\beta+z=0,$ where $\alpha,\beta\in\ZZ_{>0}$;
        \item [(B)] $x^2+y^2+z^\gamma=0$, where $\gamma\in\ZZ_{>2}$;
        \item [(C)] $x^2+y^2+z^2=0$.
    \end{enumerate}
    Thus, $\delta$ corresponds to a homogeneous LND on a surface of the form (A), (B) or (C). Conversely, a homogeneous LND $\hat\delta$ on $\KK[Z]$ lifts to a homogeneous LND on $R$ if and only if $\hat\delta(T_{ic_i})\in R$ for all $i$.

    \textbf{Case (A).} In this case the homogeneous LNDs are described in Section~\ref{SectionCasePlane}. It suffices to consider items $(i)$ and $(iii)$ of Proposition~\ref{PropHom-A^2}, since the remaining cases are obtained by permutation of variables.

    \textbf{(A1)}
    Suppose $\delta$ is a replica of ${\partial/\partial x}$. Then $\delta(T_{1c_1})=0$. Applying $\delta$ to the relation $T_0^{l_0}+T_1^{l_1}+ T_{2}^{l_2}=0$, we obtain
    $$
    \delta(T_{0c_0}){\partial T_0^{l_0}\over\partial T_{0c_0}}=-\delta(T_{2c_2}){\partial T_2^{l_2}\over\partial T_{2c_2}}.
    $$
    Since the variables $T_{ij}$ are pairwise coprime (Lemma~\ref{LemmaCoprime}), we have
    $$
    {\partial T_0^{l_0}\over\partial T_{0c_0}}\mid \delta(T_{2c_2}) \qquad\mbox{ and }\qquad {\partial T_2^{l_2}\over\partial T_{2c_2}}\mid \delta(T_{0c_0}).
    $$
    Thus, $\delta$ is a replica of the derivation $(a)$ from Construction~\ref{ConstrDer2}. Moreover, $$\ker\delta=\KK[y]\cap R=\kk[T_{1c_1},\,T_{ij}\mid j\neq c_i].$$

    \textbf{(A2)}
    Suppose now that $\delta$ is a replica of the derivation from item~$(iii)$ of Proposition~\ref{PropHom-A^2}. In particular, $\beta=b\alpha$, where $b\in\ZZ$. We will show that in this case $\alpha = \gcd(l_{01},\ldots,l_{0n_0})$ and $\alpha\mid\gcd(l_{11},\ldots,l_{1n_1})$.
    We have
    \begin{equation}\label{Eq0}
        \delta(x)=by^{b-1}f,\qquad\delta(y)=\lambda f,\qquad f\in\KK[\lambda x-y^b],\quad\lambda\in\KK.
        \tag{6.1}
    \end{equation} whence
    \begin{equation}\label{Eq3}
        {\delta(T_0^{l_0})\over \delta(T_1^{l_1})}={\delta(x^\alpha)\over \delta(y^{\beta})} = {1\over\lambda}{x^{\alpha-1}\over y^{\beta-b}} ={1\over\lambda}{\sqrt[\alpha]{T_0^{l_0}}\over \sqrt[\alpha]{{T_1^{l_1}}}}\cdot{T_1^{l_1}\over T_0^{l_0}}.
        \tag{6.2}
    \end{equation}   
    By the homogeneity of $\delta$, the expression~(\ref{Eq3}) lies in the field of rational invariants $\Quot(R)^{\TT}$, each element of which by Lemma~\ref{LemmaRationalInvariants} is representable as a rational function over $\Bbbk$ in $T_i^{l_i/d_i}$, where $d_i=\gcd(l_{i1},\ldots,l_{in_i})$. Since $\lambda$ does not depend on $T_{01},\,T_{11}$, we obtain $\alpha\mid d_0$ and $\alpha\mid d_1$, as well as $\lambda\in\Bbbk$.

    Next, eliminating $f$ from the relations~(\ref{Eq0}):
    $$
    \lambda\cdot\delta(T_{01}) {\partial T_0^{l_0/\alpha}\over\partial T_{01}} = \delta(T_{11}) {\partial T_1^{l_1/\alpha}\over\partial T_{11}}.
    $$
    Now the coprimeness of the variables (Lemma~\ref{LemmaCoprime}) shows that
    $$
    \delta(T_{01})= h\cdot {\partial T_1^{l_1/\alpha}\over\partial T_{11}},\qquad \delta(T_{11})= \lambda h \cdot {\partial T_0^{l_0/\alpha}\over\partial T_{01}}
    $$
    for some $h\in R$. Applying $\delta$ to the relation $T_0^{l_0}+T_1^{l_1}+T_2^{l_2}=0$, we obtain
    \begin{equation}\label{Eq4}
        \delta(T_{21}){\partial T_2^{l_2}\over\partial T_{21}} =- h \left( {\partial T_0^{l_0}\over\partial T_{01}}{\partial T_1^{l_1/\alpha}\over\partial T_{11}} +\lambda {\partial T_0^{l_0/\alpha}\over\partial T_{01}} {\partial T_1^{l_1}\over\partial T_{11}}\right).
        \tag{6.3}
    \end{equation}
    Note that the monomial on the left-hand side and the expression in parentheses on the right-hand side are not, in general, coprime in $R$. We therefore need the following lemma.
    \begin{lemma}\label{LemmaQuasiCoprime}
        Let a monomial $P$ and polynomials $Q, F$ be such that $P\mid T_2^{l_2}$, $F$ contains no monomials divisible by~$T_2^{l_2}/P$, and $Q$ does not depend on any of the variables $T_{2j}$.  Suppose that $FP=hQ$ in $R$ for some $h\in R$. Then $P\mid h$ in $R$.
        \end{lemma}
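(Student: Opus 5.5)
The plan is to use a normal form for elements of $R$ with respect to the single relation, viewing $R$ as a free module over the subring generated by all variables other than the $T_{2j}$. Then the equality $FP=hQ$ can be compared coefficient by coefficient. We work in the setting of Theorem~\ref{TheoremType2}: $R=\kk[T_{ij},S_k]/(T_0^{l_0}+T_1^{l_1}+T_2^{l_2})$. We may assume $Q\neq 0$, since this is the situation in which the lemma is applied.

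\emph{Step 1 (normal form).} Put $B=\kk[T_{0j},T_{1j},S_k]$. The relation reads $T_2^{l_2}=-(T_0^{l_0}+T_1^{l_1})$ with right-hand side in $B$. The relation is monic in the monomial $T_2^{l_2}$, and $T_2^{l_2}$ is its leading term for any elimination order in which the $T_{2j}$ dominate. Hence every element of $R$ has a unique representative in $\kk[T_{ij},S_k]$ no monomial of which is divisible by $T_2^{l_2}$. Equivalently, $R$ is a free $B$-module with basis the monomials $m$ in the variables $T_{2j}$ not divisible by $T_2^{l_2}$. We write $h=\sum_m h_m m$ with $h_m\in B$.

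\emph{Step 2 (both sides are already in normal form).} Since $Q\in B$, we get $hQ=\sum_m (Qh_m)m$, which is again in normal form. For the left side, the key observation is that $P\mid T_2^{l_2}$ forces $P$ to be a monomial in the $T_{2j}$ alone. Take a monomial $m'$ of $F$ and suppose $T_2^{l_2}\mid m'P$. Comparing exponents of each $T_{2j}$ gives $m'_j\ge l_{2j}-p_j$ for every $j$, i.e.\ $T_2^{l_2}/P\mid m'$, which the hypothesis on $F$ excludes. Thus $FP$, computed in the polynomial ring, is already in normal form. Moreover, when written in the basis, its only nonzero $B$-coefficients sit at basis monomials $m$ divisible by $P$.

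\emph{Step 3 (conclusion).} By uniqueness of the normal form, $Qh_m=0$ for every basis monomial $m$ with $P\nmid m$. Since $B$ is a domain and $Q\ne0$, this gives $h_m=0$ for such $m$. Hence $h=P\cdot\sum_m h_m\,(m/P)$ already in the polynomial ring, so $P\mid h$ in $R$.

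The only point needing care is Step 2: checking that multiplication by $P$ does not create monomials divisible by $T_2^{l_2}$. This is exactly where the hypothesis that $F$ has no monomial divisible by $T_2^{l_2}/P$ is used. Everything else is the standard uniqueness of the normal form for a relation monic in its leading monomial.
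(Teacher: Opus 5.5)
Your proof is correct and follows essentially the same route as the paper. Both take the representative of $h$ with no monomial divisible by $T_2^{l_2}$, note that $FP$ and $HQ$ are then both reduced and hence equal as polynomials, and deduce $P\mid h$. The only difference is the last step: the paper invokes coprimality of $P$ and $Q$ in the polynomial ring, while you compare coefficients in the free $B$-module decomposition. You also make explicit two points the paper leaves tacit, namely the uniqueness of the normal form and the assumption $Q\neq0$.
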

        \begin{proof}
            Choose a polynomial $H$ representing $h\in R$ in such a way that it contains no monomials divisible by $T_2^{l_2}$. Then
            $
           FP-HQ\in I
            $
            and each term in $FP-HQ$ contains no monomials divisible by $T_2^{l_2}$. Consequently,
            $FP=HQ.$ Since the polynomials $P$ and $Q$ are coprime, we obtain $P\mid H$.
        \end{proof}
    It follows from relations~(\ref{Eq0}) that $\delta(T_{21})$ can be expressed as a polynomial not depending on~$T_{21}$.
    Applying Lemma~\ref{LemmaQuasiCoprime} to relation~(\ref{Eq4}), we conclude that $\delta$ is a replica of the derivation~$(b)$ from Construction~\ref{ConstrDer2}.
    Moreover, the subalgebra $\ker\delta$
    is generated over $\Bbbk$ by the variables $T_{ij}$, where $j\neq c_i$, along with the element $\lambda{ T_0^{l_0/\alpha}}-{T_{1}^{l_1/\alpha}}$.

    \textbf{Case (B).}
    The homogeneous LNDs on $Z$ are found in Proposition~\ref{PropHomCaseGamma}. Thus, $\delta$ is a replica of the derivation $\delta_{\infty}$ or $\delta_0$. Let us consider, for example, the case $\delta_0$. We have
    $$
    \delta_{0}\,\colon\,x\longmapsto \ii\gamma z^{\gamma-1},\quad y\longmapsto \gamma z^{\gamma-1},\quad z\longmapsto -2(\ii x+y),\quad\qquad\ker\delta_0=\KK[\ii x+y].
    $$
    By considering the ratio $\delta(T_0^{l_0})/\delta(T_1^{l_1}) = \ii x/y$, analogously to case~(A2), we conclude that $l_{ij}$ is even for $i=0,1$.

    We also have $\delta(x) = \ii\delta(y)$, that is, $$\delta(T_{01}){T_{02}^{l_{02}/2}\dots T_{0n_0}^{l_{0n_0}/2}}=\ii\delta(T_{11}){T_{12}^{l_{12}/2}\dots T_{1n_1}^{l_{1n_1}/2}},$$ whence, for some $h\in R$,
    $$
    \delta(T_{01})=\ii h{T_{12}^{l_{12}/2}\dots T_{1n_1}^{l_{1n_1}/2}}, \qquad \delta(T_{11}) = h{T_{02}^{l_{02}/2}\dots T_{0n_0}^{l_{0n_0}/2}}.
    $$
    Applying $\delta$ once more to the relation $T_0^{l_0}+T_1^{l_1}+T_2^{l_2}=0$, we obtain
    $$
    {\partial T_2^{l_2}\over \partial T_{21}}\delta(T_{21}) = -2h{T_{12}^{l_{12}/2}\dots T_{1n_1}^{l_{1n_1}/2}} \cdot {T_{02}^{l_{02}/2}\dots T_{0n_0}^{l_{0n_0}/2}} \left( \ii{T_0^{l_0/2}} + {T_1^{l_1/2}} \right).
    $$
    Let us apply Lemma~\ref{LemmaQuasiCoprime} to the last equality. It follows from Proposition~\ref{PropHomCaseGamma} that $\delta(T_{21})$ does not depend on $T_{21}$. Taking into account the coprimeness of the variables, we obtain $\left(\partial T_2^{l_2}/ \partial T_{21}\right)\mid h$.
    Thus, we have obtained derivations of the form~$(c)$ from Construction~\ref{ConstrDer2} when $l_{2c_2}>2$.

    \textbf{Case (C).} In this case, $\delta$ is a replica of one of the derivations $\delta_\infty$ and $\delta_\lambda$, $\lambda\in\KK$, described in Proposition~\ref{PropHomCase222}. The derivations $\delta_\lambda$, where $\lambda\in\{0,\infty,\pm1,\pm\ii\}$, are taken by permutations of variables to derivations proportional to those in this set. Repeating for them the reasoning of case~(B), we obtain derivations of the form~$(c)$ from Construction~\ref{ConstrDer2} when $l_{2c_2}=2$.

    It remains to consider $\delta_\lambda$ when $\lambda\in\KK\setminus \{\,0,\,\pm1,\,\pm\ii\,\}$. Let $\delta=h\delta_\lambda$ for some $h\in\ker\delta_\lambda$. We will show that $d_i=2$ for all $i$ and $\lambda\in\Bbbk$. We have
    \begin{equation}\label{Eq5}
        \delta_\lambda(x)= 2\lambda y+\ii (1+\lambda^2)z,\qquad \delta_\lambda(y) = -2 \lambda x+(1-\lambda^2)z.
        \tag{6.4}
    \end{equation}
    We use once again the fact that $\delta(T_0^{l_0})/\delta(T_1^{l_1})$ lies in the field $(\Quot R)^{\TT}$, every element of which by Lemma~\ref{LemmaRationalInvariants} is representable as a rational function in $
    T_i^{l_i/d_{i2}}/ T_2^{l_2/d_{i2}}$, where $d_{i2}=\gcd(d_i, d_2)$. It is clear that $d_{i}\in\{1,\,2\}$ for all~$i$. On the other hand, if $\delta$ is a replica of $\delta_\lambda$, then
    $$
    {\delta(T_0^{l_0})\over\delta(T_1^{l_1})} = {x\delta_\lambda(x)\over y\delta_\lambda(y)} = {2\lambda + \ii(1+\lambda^2){z\over y}\over-2\lambda + (1-\lambda^2){z\over x}}.
    $$
    Consequently, since $\lambda$ does not depend on $T_{i1}$, we obtain $d_{0}=d_{1}=d_2=2$ and $\lambda\in\Bbbk$.

From relations~(\ref{Eq5}), it follows that
\begin{equation}\label{Eq6}
    \begin{split}
    \delta(T_{01})\cdot{T_{02}^{l_{02}/2}\dots T_{0n_0}^{l_{0n_0}/2}} &=h \left(2\lambda y + \ii (1+\lambda^2)z \right),\\
        \delta(T_{11})\cdot{T_{12}^{l_{12}/2}\dots T_{1n_1}^{l_{1n_1}/2}} &= h\left(-2\lambda x + (1-\lambda^2)z \right),\\
         \delta(T_{21})\cdot{T_{22}^{l_{22}/2}\dots T_{2n_2}^{l_{2n_2}/2}} &=-h(\ii(1+\lambda^2)x+(1-\lambda^2)y).
    \end{split}
    \tag{6.5}
\end{equation}
    Recall that $h \in\KK[(1-\lambda^2)x-\ii(1+\lambda^2)y + 2\lambda z]$ and $\KK$ does not contain the variables $T_{i1}$. Hence any of the relations~(\ref{Eq6}) implies $h\in R$.
    Moreover, by Lemma~\ref{LemmaQuasiCoprime} (applied after a suitable renumbering of the monomials), $h$ is divisible by $\prod_{i=0}^{2}{T_{i2}^{l_{i2}/2}\dots T_{in_i}^{l_{in_i}/2}}$. At the same time, $\delta(T_{i1})$ is divisible by the expression in parentheses on the right-hand side of the corresponding equality in~(\ref{Eq6}). Thus $\delta$ is a replica of the derivation~$(d)$ from Construction~\ref{ConstrDer2}, and $\ker\delta=\ker\delta_\lambda\cap R$.
\end{proof}

\begin{proof}[Proof of Theorem~\ref{TheoremType2Classes}]
    Let $\delta$ be a nonzero ${\TT}$-homogeneous LND on $R$. By Lemma~\ref{LemmaHLND}, either $\delta$ is a replica of ${\partial/\partial S_k}$, or there exists a tuple $C$ satisfying the stated conditions. Let $\KK$ be the algebraic closure of the field obtained from $\Bbbk$ by adjoining all variables $S_k$ and $T_{ij}$ with $j\neq c_i$. Consider the trinomial surface $Z$ over the field $\KK$, given by the same equations as $\Spec R$. By the rigidity criterion~\cite[Theorem~4]{EGSh}, there exist three indices, say $0,1,2$, such that $l_{ic_i}=1$ for $i>2$. Consequently, for $i>2$, the variables $T_{ic_i}$ are expressed linearly in $\KK[Z]$ in terms of $T_{0c_0}^{l_{0c_0}}, T_{1c_1}^{l_{1c_1}}, T_{2c_2}^{l_{2c_2}}$. Thus we may assume that $Z$ is given by the equation \begin{equation}\label{Eq100}
        aT_0^{l_0}+bT_1^{l_1} + c T_2^{l_2}=0,\qquad\qquad a,b,c\in\Bbbk,
        \tag{6.6}
    \end{equation}
    in three-dimensional affine space over $\KK$.
    An explicit formula of $\delta$ is now obtained by repeating the proof of Theorem~\ref{TheoremType2}, taking the coefficients of equation~(\ref{Eq100}) into account. In doing so, the images of the variables $T_{ic_i}$ with $i=0,1,2$ should be multiplied by $\prod_{i>2} {\partial T_i^{l_i}/\partial T_{ic_i}}$, while the images of the variables $T_{ic_i}$ with $i>2$ are determined by the relations between the monomials $T_0^{l_0},\,T_1^{l_1},\,T_i^{l_i}$.
\end{proof}

\section*{Acknowledgements} We sincerely thank our research advisor Sergey Gaifullin for setting the problem, useful discussions, and continuous support throughout the writing of this manuscript.

\end{document}